\documentclass{article}%
\usepackage{amsfonts}
\usepackage{amssymb}
\usepackage{amsmath}
\usepackage{fancyhdr}
\usepackage{graphicx}%
\setcounter{MaxMatrixCols}{30}
\providecommand{\U}[1]{\protect\rule{.1in}{.1in}}
\newtheorem{theorem}{Theorem}

\newtheorem{corollary}[theorem]{Corollary}

\newtheorem{example}[theorem]{Example}

\newtheorem{lemma}[theorem]{Lemma}

\newtheorem{proposition}[theorem]{Proposition}

\newenvironment{proof}[1][Proof]{\noindent\textbf{#1.} }{\ \rule{0.5em}{0.5em}}
\lhead{text}
\setcounter{tocdepth}{4}
\ifx\pdfoutput\relax\let\pdfoutput=\undefined\fi
\newcount\msipdfoutput
\ifx\pdfoutput\undefined\else
\ifcase\pdfoutput\else
\msipdfoutput=1
\ifx\paperwidth\undefined\else
\ifdim\paperheight=0pt\relax\else\pdfpageheight\paperheight\fi
\ifdim\paperwidth=0pt\relax\else\pdfpagewidth\paperwidth\fi
\fi\fi\fi
\begin{document}

\title{Complex Hyperbolic Geometry and Hilbert Spaces with the Complete Pick Property}
\author{Richard Rochberg}
\maketitle

\begin{abstract}
Suppose $H$ is a finite dimensional reproducing kernel Hilbert space of
functions on $X.$ If $H$ has the complete Pick property then there is an
isometric map, $\Phi,$ from $X,$ with the metric induced by $H,$ into complex
hyperbolic space, $\mathbb{CH}^{n},$ with its pseudohyperbolic metric. We
investigate the relationships between the geometry of $\Phi(X)$ and the
function theory of $H$ and its multiplier algebra.

Key Words: complete Pick property; complex hyperbolic space

\end{abstract}
\tableofcontents

\section{Introduction and Summary}

We begin with an informal overview; definitions and details are in the later sections.

The Hilbert spaces in this paper are finite dimensional.

Suppose $H$ is a reproducing kernel Hilbert space, RKHS, of functions on a set
$X.$ If $H$ has the complete Pick property, CPP, then there is a map, $\Phi,$
of $X$ into the complex unit ball, $\mathbb{B}^{n}\subset\mathbb{C}^{n},$ so
that $H$ is equivalent to $DA_{n}\left(  \Phi(X)\right)  ,$ the subspace of
the Drury Arveson space, $DA_{n},$ generated by the $DA_{n}$ kernel functions
for points of $\Phi(X).$ The map $\Phi$ is an isometry, mapping $X$ with the
metric induced by $H$ to $\Phi(X)$ with the metric induced by $DA_{n}.$ That
latter metric is the restriction to $\Phi(X)$ of the pseudohyperbolic
metric$\ $on $\mathbb{B}^{n}.$ Thus, the passage from $H$ to $\Phi(X)$
establishes a correspondence between finite dimensional RKHS with the CPP and
finite subsets of complex hyperbolic space, $\mathbb{CH}^{n}$. In this paper
we study the relationship between the analytic structure of $H$ and its
multiplier algebra, and the geometry of $\Phi(X)$.

Because we are working at the interface of different areas our expository
material is richer than usual.

The next section contains notation and background about Hilbert spaces and
about complex hyperbolic space. In the section after that we introduce and
develop numerical invariants of RKHS; some are based on Gram matrix entries,
others are defined using extremal problems in $H$ or its multiplier algebra.
We also discuss related geometric invariants of sets in hyperbolic space. In
Section 4 we consider rescalings of a RKHS $H.$ For $H$ with the CPP we
establish a close relation between rescalings and the action of the
automorphism group of hyperbolic space on the associated set $\Phi(X).$ We
also discuss other ways of modifying a RKHS to produce new spaces, and
establish a relation between those modifications and conjugation operators on
the Hilbert space.

Section 5 contains results about the existence and properties of the embedding
$\Phi.$ We begin with an analysis of a strengthened version of the triangle
inequality which must hold if there is such a $\Phi,$ but is not a sufficient
condition. We then consider two dimensional $H;$ there the embedding is always
possible and is easy to describe. For a three dimensional $H$ many aspects of
the general finite dimensional case appear, the most important being that the
embedding may not be possible; for it to be possible $H$ must have the Pick
property. To go beyond three dimensional spaces we use induction on dimension.
The induction step resembles the three dimensional construction but it
requires that $H$ have the more subtle complete Pick property.

If $H$ is three dimensional then describing $\Phi(X)$ up to automorphisms of
$\mathbb{CH}^{2}$ is a version of the question studied systematically by Brehm
\cite{B} of describing congruence classes of triangles in complex hyperbolic
space$.$ He parameterizes those classes by the three distances between pairs
of vertices and a fourth quantity, the "shape invariant", an invariant from
hyperbolic geometry that has no Euclidean analog. In particular, even when
$\Phi(X)$ only has three points, its geometric structure is richer than that
encoded in its metric. One of our implicit goals is to understand this
geometry which is more general than metric geometry.

In Section 6 we relate the Hilbert space invariants we introduced to the
geometry of $\Phi(X).$ In particular we describe conditions on the numerical
invariants which correspond to the having $\Phi(X)$ inside a single geodesic
or inside a totally geodesically embedded copy of the Poincare disk,
$\mathbb{CH}^{1},$ or of the real hyperbolic plane $\mathbb{RH}^{2}.$ We also
establish a relationship between $\Phi(X)$ being in a copy of the Poincare
disk and $H$ having a conjugation operator which interchanges the basis of
reproducing kernels with its dual basis.

In Section \ref{tree spaces} we consider a class of RKHS of functions on trees
and describe the associated maps $\Phi(X).$ That class of spaces includes the
dyadic Dirichlet space studied systematically in \cite{ARS02} as well as more
recent variants \cite{ARSW18}.

In Section \ref{multiplier algebra} we briefly discuss the multiplier algebra
of $H.$ If $H$ has the CPP then one can recover $H$ using numerical data
derived from its multiplier algebra. Less is known about general $H.$

A brief final section contains remarks and questions.

\section{Background and Notation}

General background on the material we discuss is in \cite{Ru}, \cite{Go},
\cite{AM}, \cite{Sa}, \cite{Sh}, and \cite{ARSW18}.

\subsection{RKHS}

An $n$ dimensional reproducing kernel Hilbert space, RKHS, is an $n$
dimensional complex inner product space, $H,$ together with a distinguished
basis, $B=B(H)=\left\{  k_{i}\right\}  _{i=1}^{n},$ of vectors called
\textit{reproducing kernels}. Associated with $H$ is a set $X=X(H)$ with the
same index set as $B.$ Vectors $h\in H$ can be, and generally are, regarded as
functions on $X(H)$ by setting, for $x\in H,$ $h(x)=\left\langle
h,k_{x}\right\rangle .$ We suppose throughout that $H$ is irreducible; that
is, for any $x,y\in X(H),$ $x\neq y,$ the functions $k_{x}$ and $k_{y}$ are
linearly independent and $\left\langle k_{x},k_{y}\right\rangle \neq0$

Sometimes we will write $H$ for such a space without further comment and, also
without comment, write $X=\left\{  x_{i}\right\}  _{i=1}^{n}$ for $X(H)$. If
$x=x_{i}$ and $y=x_{j}$ are in $X(H)$ we may write $k_{x}$ or $k_{i}$ for
$k_{x_{i}},$ and write $k_{xy}$ or $k_{ij}$ for $\left\langle k_{i}%
,k_{j}\right\rangle =k_{x_{i}}(x_{j}).$ We will denote the normalized kernel
$k_{ii}^{-1/2}k_{i}$ by $\widehat{k_{i}},$ and write the inner product of two
such as $\widehat{k_{xy}};$ thus $\widehat{k_{xy}}=$ $<\widehat{k_{x}%
},\widehat{k_{y}}>.$ The Gram matrix of $H$ is the positive $n\times n$ matrix
$G(H)=\left(  k_{ij}\right)  .$

The function $\delta=\delta_{H}$ defined by; for $x_{i},x_{j}\in X(H),$%
\begin{equation}
\delta_{ij}^{2}=\delta_{H}^{2}(x_{i},x_{j})=1-\frac{\left\vert k_{ij}%
\right\vert ^{2}}{k_{ii}k_{jj}}=1-|\widehat{k_{ij}}|^{2}, \label{def d}%
\end{equation}
is a metric on $X(H).$ It is an elementary exercise that the same quantity is
described by%
\begin{equation}
\delta_{H}(x,y)=\delta(x,y)=\frac{1}{\left\Vert k_{x}\right\Vert }\sup\left\{
\operatorname{Re}h(x):h\in H\text{, }h(y)=0,\left\Vert h\right\Vert _{H}%
\leq1\right\}  . \label{extremal d}%
\end{equation}
Also, it is not hard to show that, with $P_{x}$ denoting the orthogonal
projection onto the span of the kernel function $k_{x},$ $\delta_{H}$ can be
described in terms of operator norms:
\begin{equation}
\delta_{H}(x,y)=\left\Vert P_{x}-P_{y}\right\Vert . \label{normm}%
\end{equation}

The metric $\delta$ is a generalization of the classic pseudohyperbolic
metric, $\rho,$ on the disk. If $H$ is the Hardy space $H^{2}$ then, on the
unit disk $\delta_{H}=\rho.$ For more about $\delta_{H}$ see \cite{ARS07},
\cite{ARSW18}.

\subsection{Multiplier Algebras}

Given a \textit{symbol function}, $m,$ defined on $X(H),$ the associated
\textit{multiplier operator}, $M_{n},$ is the linear operator on $H$ defined
by, for $h\in H,x\in X,$ $\left(  M_{m}h\right)  (x)=m(x)h(x).$ The collection
of all multiplier operators on $H$ is the \textit{multiplier algebra} of $H,$
$\operatorname*{Mult}\left(  H\right)  $. With the operator norm
$\operatorname*{Mult}\left(  H\right)  $ is a commutative Banach algebra
generated by $n$ orthogonal idempotents. We denote its spectrum, its maximal
ideal space, by $\operatorname*{Spec}\left(  \operatorname*{Mult}\left(
H\right)  \right)  .$ The Gleason metric on the spectrum is defined by, for
$x,y\in\operatorname*{Spec}\left(  \operatorname*{Mult}\left(  H\right)
\right)  $,
\begin{equation}
\delta_{G}(x,y)=\sup\{\operatorname{Re}m(x):m\in\operatorname*{Mult}\left(
H\right)  \text{, }m(y)=0,\left\Vert m\right\Vert _{_{\operatorname*{Mult}%
\left(  H\right)  }}\leq1\}. \label{extremal g}%
\end{equation}
It is an exercise in the use of von Neumann's inequality that $\delta_{G}$ can
also be described using the pseudohyperbolic metric \cite{ARSW18}.%
\begin{equation}
\delta_{G}(x,y)=\sup\{\rho(m(x),m(y)):m\in\operatorname*{Mult}\left(
H\right)  \text{, }\left\Vert m\right\Vert _{_{\operatorname*{Mult}\left(
H\right)  }}\leq1\}.
\end{equation}

Identifying $x_{i}\in X(H)$ with the maximal ideal of multipliers which vanish
at $x_{i}$ gives a natural identification of $X(H)$ with $\operatorname*{Spec}%
\left(  \operatorname*{Mult}\left(  H\right)  \right)  .$ Using this
identification we also regard $\delta_{G}$ as a metric on $X(H).$

\subsection{Rescaling and Invariance\label{invariance}}

We want to note when two RHKS are the same in a natural sense. We do this with
the equivalence relation \textit{rescaling}.

Suppose $H$ and $\widetilde{H}$ are two RKHS of the same finite dimension. We
say that $\widetilde{H}$ is a \textit{rescaling} of $H,$ or is obtained from
$H$ by rescaling, and write $H\sim\widetilde{H},$ if there is a one to one map
$\Xi:X(H)\rightarrow X(\widetilde{H})$ and a nonvanishing complex valued
function $\gamma$ defined on $X(H)$ so that, with $\left\{  k_{i}\right\}  $
and $\{\tilde{k}_{i}\}$ denoting the kernel functions for $H$ and
$\widetilde{H}$ respectively, we have for all $x\in X(H),$
\begin{equation}
\tilde{k}_{\Xi(x)}(\Xi(\cdot))=\gamma(x)k_{x}(\cdot); \label{rescaling}%
\end{equation}
or, equivalently, $\forall x,y\in X(H)$%
\begin{equation}
\tilde{k}_{\Xi(x)\Xi(y)}=\left\langle \tilde{k}_{\Xi(x)},\tilde{k}_{\Xi
(y)}\right\rangle =\gamma(x)\overline{\gamma(y)}k_{xy}. \label{rescaling 2}%
\end{equation}
Another equivalent formulation is that the linear map $A:H\rightarrow
\widetilde{H}$ defined by $A(k_{x})=\tilde{k}_{\Xi(x)}$ and linearity has the
property that $A^{\ast}A$ is diagonalized by the $\left\{  k_{i}\right\}  $
and has nonzero eigenvalues.

Rescaling is an equivalence relation, more details about it are \cite[Sec.
2.6]{AM}. If $H\sim\widetilde{H}$ we can use $\Xi$ to identify
$X(\widetilde{H})$ with $X(H),$ thus reducing to the case of
$X(H)=X(\widetilde{H})$ and $\Xi$ the identity map. We may do this without mention.

Associated with $\widetilde{H}$ is the new Gram matrix, $G(\widetilde{H}).$ If
$X(H)=X(\widetilde{H})$ and if $\Xi$ is the identity map then $G(H)$ and
$G(\widetilde{H})$ are related by%
\begin{equation}
\Gamma(\gamma(x_{1}),...,\gamma(x_{n}))\text{ }G(H)\text{ }\overline
{\Gamma(\gamma(x_{1}),...,\gamma(x_{n}))}=G(\widetilde{H}).
\label{modified gram}%
\end{equation}
Here $\Gamma(c_{i}...,c_{n})$ is the $n\times n$ matrix with $c_{1},...,c_{n}$
on the diagonal and zeros elsewhere. If matrices $A$ and $B$ are related in
this way then we will write $A\sim B.$ Different choices, $\Gamma=\Gamma_{1}$
and $\Gamma=\Gamma_{2},$ produce different $G(\widetilde{H})$ unless
$\Gamma_{1}=\alpha\Gamma_{2}$ for some unimodular $\alpha,$ $.$

One convenient rescaling is the \textit{basepoint rescaling. }A point $x\in
X(H)$ is selected as basepoint and $H$ is rescaled so that the rescaled kernel
for $x$ is identically one. The Gram matrix of the rescaled space will have
ones in the row and column corresponding to $y.$ Two spaces are equivalent
under rescaling if and only if they have the same Gram matrix after basepoint
rescaling. Another useful rescaling is \textit{normalized kernels rescaling
}in which all the kernel functions are rescaled to be unit vectors. That
rescaled space has a Gram matrix with all ones on the diagonal. That rescaling
becomes unique after a further rescaling to insure, for instance, that the
entries in the first row of the Gram matrix are real. We will encounter a
different type of rescaling in the proof of Theorem \ref{is orthogonal}.

We will call quantities built from $H$ invariant if they are unchanged under
rescaling. For instance, neither the Gram matrix entries, $k_{ij},$ nor the
normalized kernel functions $\widehat{k_{i}}$ are invariant, but both
$|\widehat{k_{xy}}|$ and $\delta_{H}(x,y)$ are invariant. The Gram matrix of
the basepoint normalized rescaling is invariant as is Gram matrix of the
normalized kernel rescaling once it is further rescaled so that the first row
is real. The multiplier algebra is invariant. That is, if $H\sim\widetilde{H}%
$, then $\operatorname*{Mult}\left(  H\right)  $ and $\operatorname*{Mult}%
(\widetilde{H})\ $are the same sets of functions with the same algebraic
structure and with the same norm.

Some statements which are not invariant under rescaling can be viewed as the
specializations of invariant statements obtained by basepoint rescaling. For
example, the statement $k_{xx}=k_{xy}$ is not invariant. However the
statement
\[
\frac{k_{xx}}{k_{x\alpha}k_{\alpha x}}=\frac{k_{xy}}{k_{x\alpha}k_{\alpha y}%
}.
\]
is invariant; and, after basepoint rescaling with $a$ as the basepoint,
specializes to $k_{xx}=k_{xy}$.

There is an interesting discussion of this type of transformation in
\cite[7.2.3]{Go}.

\subsection{The Complete Pick Property and the Spaces $DA_{n}$}

We are particularly interested in spaces $H$ with the CPP. There is a
substantial literature on this class and we will take what we need from
\cite{AM}, \cite{Sa}, \cite{Sh}, and \cite{ARSW18}.

The Pick property is an extension property for multipliers. Suppose an
$n-$dimensional RKHS $H$ is given, $n\geq2,$ along with a subset $Y$ of
$X(H).$ Let $H_{Y}$ be the RKHS that is the span of $\left\{  k_{y}\right\}
_{y\in Y}$. Given $M=M_{m}\in\operatorname*{Mult}\left(  H\right)  ,$
$\left\Vert M\right\Vert \leq1$, we can define a multiplier $M_{Y}$ on $H_{Y}$
by restricting $m,$ which is a function on $X(H),$ to a function, now called
$m_{Y},$ on the subset $Y=X(H_{Y})\subset X(H).$ We define $M_{Y}$ to be the
multiplier on $H_{Y}$ with symbol function $m_{Y}.$ The adjoint, $M_{Y}^{\ast
},$ is the restriction of $M^{\ast}$ to the $M^{\ast}$ invariant subspace
$H_{Y};$ hence $\left\Vert M_{Y}^{\ast}\right\Vert \leq$ 1, and thus, also,
$\left\Vert M_{Y}\right\Vert \leq1.$ The extension problem which defines the
Pick property is the converse question. Given $N_{Y},$ a multiplier on $H_{Y}$
of norm one, is there a multiplier $M$ on $H$, $\left\Vert M\right\Vert =1,$
so that $N_{Y}=M_{Y}$? If this question always has a positive answer then $H$
is said to have the Pick property (or the scalar Pick property). The stronger
and more subtle CPP is defined by also having a positive answer to the
matricial analog of that multiplier extension question.

In many cases below where we hypothesize that a space has the CPP, it would
suffice to just assume the Pick property. We leave it to the interested reader
to note those refinements as we go.

The Drury Arveson spaces, $DA_{n},$ are fundamental example of spaces with the
CPP. Let $\mathbb{B}^{n}\subset\mathbb{C}^{n}$ be the ball in complex $n-$
space, and denote the inner product on $\mathbb{C}^{n}$ by $\left\langle
\cdot,\cdot\right\rangle ,$ The space $DA_{n}$ is the RKHS of holomorphic
functions on $\mathbb{B}^{n}$ defined by the reproducing kernels
$\{k_{z}(\cdot)=\left(  1-\left(  \cdot,z\right)  \right)  ^{-1}%
:z\in\mathbb{B}^{n}\}.$ In particular $DA_{1}$ is the classical Hardy space
$H^{2}$ on the unit disk. These spaces are discussed in detail in \cite{Sh}.

For any finite $Y\subset\mathbb{B}^{n}=X(DA_{n})$ let $DA_{n}(Y)$ be the
subspace of $DA_{n}$ spanned by the subset $\left\{  k_{y}\right\}  _{y\in Y}$
of the $DA_{n}$ reproducing kernels. Each of these spaces inherits the CPP
from its containing $DA_{n}.$ Any $H$ with the CPP is a rescaling of a space
$DA_{n}(X)$ and that fact is the starting point for our discussions.

\begin{theorem}
[{\cite[Thm. 8.2]{AM}}]\label{embed}A finite dimensional RKHS $H$ has the
complete Pick property if and only if there is a finite set $X$ in some
$\mathbb{CH}^{n}$ such that $H\sim DA_{n}(X).$
\end{theorem}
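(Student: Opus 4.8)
The plan is to prove the two implications separately. The reverse implication is the routine one. Suppose $H\sim DA_{n}(X)$ for a finite set $X\subset\mathbb{B}^{n}$. Since $DA_{n}(X)$ is the span of a subset of the reproducing kernels of $DA_{n}$, it inherits the CPP from $DA_{n}$, as recorded above. It then remains to note that the CPP is a rescaling invariant: by the invariance of the multiplier algebra and its norm under rescaling, and because the scalar and matricial extension problems defining the CPP are formulated purely in terms of $\operatorname{Mult}(H)$ and the kernel-spanned subspaces $H_{Y}$ (which a rescaling carries to the corresponding subspaces of the rescaled space), the property passes from $DA_{n}(X)$ to $H$.

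For the forward implication, assume $H$ is finite dimensional, irreducible, and has the CPP, with $X(H)=\{x_{1},\dots,x_{m}\}$. First I would normalize by a basepoint rescaling at $x_{1}$; this preserves the CPP and makes $k_{x_{1}}$ the constant function $1$, so that the rescaled Gram matrix satisfies $k_{i1}=k_{1i}=1$ for all $i$. Set
\[
u_{ij}=1-\frac{1}{k_{ij}},
\]
a Hermitian matrix with vanishing first row and column. The heart of the proof is the kernel characterization of complete Pick spaces (McCullough and Quiggin; see \cite{AM}): a normalized irreducible kernel has the CPP if and only if $u=(u_{ij})$ is positive semidefinite. Applying the necessity half of that characterization gives $u\succeq0$.

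With $u\succeq0$ on the finite set $X(H)$ in hand, the rest is linear algebra. A Gram factorization of $u$ yields vectors $b_{1},\dots,b_{m}$ in some $\mathbb{C}^{N}$, with $N\le m-1$ since the first row and column of $u$ vanish, such that $u_{ij}=\langle b_{i},b_{j}\rangle$. Because $u\succeq0$ and $k_{ii}>0$ we have $0\le\|b_{i}\|^{2}=u_{ii}=1-1/k_{ii}<1$, so each $b_{i}$ lies in $\mathbb{B}^{N}$, and the defining relation rearranges to
\[
k_{ij}=\frac{1}{1-\langle b_{i},b_{j}\rangle}.
\]
The right side is exactly the value of the $DA_{N}$ reproducing kernel at the pair $(b_{i},b_{j})$. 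Hence the rescaled $H$ has the same Gram matrix as $DA_{N}(\{b_{1},\dots,b_{m}\})$, and therefore $H\sim DA_{N}(X)$ with $X=\{b_{i}\}\subset\mathbb{CH}^{N}$, completing the forward implication.

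The main obstacle is the necessity direction of the McCullough--Quiggin characterization, that the CPP forces $u\succeq0$; once that positivity is available the geometric conclusion is immediate. The usual route is a lurking-isometry argument: the totality of solvable matricial Pick constraints is encoded as the graph of a contraction on an auxiliary Hilbert space, and the associated defect relations produce precisely vectors $b_{i}$ satisfying $1/k_{ij}=1-\langle b_{i},b_{j}\rangle$, with the finiteness of $X(H)$ ensuring they sit in a common finite dimensional ball. Reconstructing that machinery in full is the genuinely hard part; since the statement is quoted as \cite[Thm.\ 8.2]{AM}, in practice I would cite it rather than redo it.
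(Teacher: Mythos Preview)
Your argument is correct. The paper, however, does not prove Theorem~\ref{embed} directly; it cites \cite[Thm.~8.2]{AM} and then, in Theorem~\ref{one side}, gives an independent proof of the forward implication by a deliberately geometric induction on $\dim H$. There the $(n+1)$st point is placed by first computing its ``footprint'' $w$ in the span of the already--placed points and then checking the length inequality $|w|^{2}\le 1-1/k_{n+1,n+1}$, which is extracted from the positivity of $MQ^{+}$ via a Schur complement. Your route is the standard one-shot version of the same positivity: you take the full McCullough--Quiggin matrix $u=(1-1/k_{ij})$, invoke $u\succeq 0$, and Gram--factor it in one step to get all the $b_i$ at once. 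Both arguments rest on exactly the same ingredient (Theorem~\ref{MQ theorem}); yours is shorter and is essentially the proof in \cite{AM}, while the paper's inductive placement is chosen to parallel the three-dimensional construction in Theorem~\ref{n=3} and to make explicit the normal-form and footprint geometry (the higher-dimensional analog of $LF_{123}^{2}\le\delta_{13}^{2}$) that the paper uses elsewhere. For the reverse implication the paper simply accepts it from \cite{AM}; your sketch that $DA_n(X)$ inherits the CPP and that the CPP is rescaling invariant is the expected justification.
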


Thus, associated to any such $H$ is a map $\Phi$ of $X(H)$ into $\mathbb{CH}%
^{n}$ so that $H\sim DA_{n}(\Phi(X(H))).$ Our interest here is is the relation
between the structural properties of $H$ and $\operatorname*{Mult}(H)$ and the
geometry of $\Phi(X(H))$.

\subsection{Complex Hyperbolic Space \label{complex hyperbolic space}}

We now discuss $\mathbb{CH}^{n},$ complex hyperbolic $n-$space. Our basic
reference is \cite{Go}.

We begin with $\mathbb{CH}^{1}.$ The unit disk, $\mathbb{D=B}^{1}%
\subset\mathbb{C}$, is a complex manifold which has a transitive group of
holomorphic automorphism, $\operatorname*{Aut}\left(  \mathbb{B}^{1}\right)
,$ the Mobius maps of the disk to itself. $\mathbb{CH}^{1}$ carries the
$\operatorname*{Aut}\left(  \mathbb{B}^{1}\right)  $ invariant
pseudohyperbolic metric, $\rho, $%
\[
\rho(z,w)=\left\vert \frac{z-w}{1-\bar{w}z}\right\vert ,
\]
which can also be defined by setting $\rho(0,z)=|z|$ and requiring that $\rho$
be $\operatorname*{Aut}\left(  \mathbb{B}^{1}\right)  $ invariant. The complex
manifold $\mathbb{B}^{1},$ together with the metric $\rho,$ and the isometry
group $\operatorname*{Aut}\left(  \mathbb{B}^{1}\right)  $ is the disk model
of one dimensional complex hyperbolic space, $\mathbb{CH}^{1}.$ The metric
$\rho$ is not a length metric. The length metric which it induces, the
Bergman-Poincare metric, is an $\operatorname*{Aut}\left(  \mathbb{B}%
^{1}\right)  $ invariant Riemannian metric of constant curvature $-1/4.$ (Care
is needed here, the Bergman-Poincare metric is sometimes defined to be twice
what we just offered, in which case it has constant curvature $-1.$ Our choice
here insures that $\beta$ is the length metric induced by $\rho.)$ The full
set of isometries of $\mathbb{CH}^{1}$ consists of the holomorphic isometries
of $\operatorname*{Aut}\left(  \mathbb{B}^{1}\right)  $ and the complex
conjugates of elements of $\operatorname*{Aut}\left(  \mathbb{B}^{1}\right)
.$ For $X,Y\subset\mathbb{CH}^{1}$ we say $X$ and $Y$ are \textit{congruent},
$X\sim Y$, if there is $\Lambda\in\operatorname*{Aut}\left(  \mathbb{B}%
^{1}\right)  $ with $X=\Lambda Y.$ If $X$ and $Y$ are ordered sets we take the
terminology and notation to include the requirement that $\Lambda$ respect the ordering.

Similar facts on the unit ball, $\mathbb{B}^{n}\subset\mathbb{C}^{n}$, give a
model for complex hyperbolic $n-$space, $\mathbb{CH}^{n}$. Details about the
ball are in \cite{Ru}, about the metric $\rho$ in \cite{DW}, and about this
realization of $\mathbb{CH}^{n}$ in \cite{Go}. We just list some highlights.

The ball has a transitive group of holomorphic automorphisms,
$\operatorname*{Aut}\left(  \mathbb{B}^{n}\right)  .$ For each $a\in$
$\mathbb{B}^{n}$ there is a $\varphi_{a}\in\operatorname*{Aut}\left(
\mathbb{B}^{n}\right)  ,$ an involution of $\mathbb{B}^{n}$ which interchanges
$0$ and $a.$ Every unitary map of $\mathbb{C}^{n}$ is in $\operatorname*{Aut}%
\left(  \mathbb{B}^{n}\right)  $ and the unitary maps together with the
involutions generate $\operatorname*{Aut}\left(  \mathbb{B}^{n}\right)  $. In
particular, any automorphism which fixes the origin is given by a unitary
map.\ As with $n=1,$ for $X,Y\subset\mathbb{CH}^{n}$ we will write $X\sim Y$
if there is an element of $\operatorname*{Aut}\left(  \mathbb{B}^{n}\right)  $
which takes $X$ to $Y.$ Also, as with $n=1,$ there are $\rho-$isometries of
$\mathbb{CH}^{n}$ which are not holomorphic but are complex conjugates of
elements of $\operatorname*{Aut}\left(  \mathbb{B}^{n}\right)  .$

The pseudohyperbolic metric, $\rho,$ on the ball can be defined by saying that
for $z,w\in\mathbb{B}^{n}$ we have $\rho(z,w)=\left\vert \varphi
_{z}(w)\right\vert =\left\vert \varphi_{w}(z)\right\vert .$ Alternatively we
can set $\rho(0,z)=\left\vert z\right\vert $ for $z\in\mathbb{B}^{n}$ and
require that $\rho$ is $\operatorname*{Aut}\left(  \mathbb{B}^{n}\right)  $
invariant. The length metric generated by $\rho$ is $\beta,$ the
Bergman-Poincare metric; a Riemannian metric which is invariant under
$\operatorname*{Aut}\left(  \mathbb{B}^{n}\right)  .$and agrees
infinitesimally with the Euclidean metric at the origin. In contrast to one
dimensional complex hyperbolic space, $\mathbb{CH}^{1}$, and to real
hyperbolic $n-$space, $\mathbb{RH}^{n},$ the space $\mathbb{CH}^{n}$ with the
metric $\beta$ does not have constant sectional curvature. This lack of
isotropy is a fundamental feature in the metric geometry of $\mathbb{CH}^{n}.$

This same model of $\mathbb{CH}^{n}\ $has an alternative description which is
often used in geometric studies. In that model $\mathbb{CH}^{n}$ is defined as
the set of "negative points in projective space". Begin with $\mathbb{C}%
^{n+1}$ and the Hermitian form $\left[  \cdot,\cdot\right]  $ of signature
$\left(  n,1\right)  $ given by%
\[
\left[  (x_{1},x_{2},...,x_{n+1}),(y_{1},y_{2},...,y_{n+1})\right]
=-x_{n+1}\bar{y}_{n+1}+\sum\nolimits_{i=1}^{n}x_{i}\bar{y}_{i},
\]
Next, form the projective space $\mathbb{CP}^{n}$ from this $\mathbb{C}%
^{n+1}.$ Although $\left[  \cdot,\cdot\right]  $ is not well defined on
$\mathbb{CP}^{n},$ the quantity $\left[  x,x\right]  $ is always real and its
sign is constant on lines in $\mathbb{C}^{n+1}.$ Thus, this sign is well
defined on $\mathbb{CP}^{n}\ $and we define $\mathbb{CH}^{n}$ to be the subset
of $\mathbb{CP}^{n}$ on which it is negative. That will never happen on a line
which has $x_{n+1}=0,$ hence we can focus on the coordinate chart where
$x_{n+1}\neq0.$ There we can use the inhomogenous coordinates on projective
space obtained by representing points using $\left(  n+1\right)  -$tuples with
$x_{n+1}=1,$ and then abusing notation by writing $(x_{1},x_{2},...,x_{n})$
for $(x_{1},x_{2},...x_{n},1).$ In those coordinates the set of negative
points, $\mathbb{CH}^{n},$ is $\left\{  (x_{1},x_{2},...,x_{n}):\sum\left\vert
s_{i}\right\vert ^{2}<1\right\}  =\mathbb{B}^{n}.$ In those coordinates, we
regard $\left[  \cdot,\cdot\right]  $ as being defined on $\mathbb{CH}^{n}$
by
\[
\left[  x,y\right]  =\left[  (x_{1},x_{2},...,x_{n}),(y_{1},y_{2}%
,...y_{n})\right]  =-1+\sum\nolimits_{i=1}^{n}x_{i}\bar{y}_{i},=-(1-\left(
x,y\right)  ),
\]
In particular the $DA_{n}$ kernel functions can be written as $k_{xy}%
=-1/\left[  y,x\right]  $ and that relation allows translation between what we
do here and the literature centered on the geometry of $\mathbb{CH}^{n}.$

In this description of hyperbolic space the automorphisms of $\mathbb{CH}%
^{n}.$ which define the geometry of the model, are taken to be be those
natural automorphism of $\mathbb{CP}^{n}$ which preserve this set of negative
points. Although it is not obvious, these are the same as the automorphism in
$\operatorname*{Aut}\left(  \mathbb{B}^{n}\right)  $ which were discussed
earlier, and so we have the same model.

We call properties of sets in $\mathbb{CH}^{n}$ invariant if they are
preserved by automorphisms. Thus a set's being in a geodesic is an invariant
statement, that two geodesics cross at a right angle is not.

\subsubsection{Invariant Submanifolds}

We will be interested in some classes of submanifolds of $\mathbb{CH}^{n}$
which are preserved by automorphisms. Geodesic arcs are the totally geodesic
submanifolds of $\mathbb{CH}^{n}$ of real dimension one. Because automorphisms
are isometries they map geodesics to geodesics. and similarly for higher
dimension totally geodesic submanifolds. In particular, the class of geodesic
segments is invariant.

There are two classes of totally geodesic submanifolds of real dimension two
and both are preserved by automorphisms. The first consists of totally real
totally geodesic submanifolds. The slice $J=\{(x,y,0,,...,0):x.y\in\mathbb{R}%
$, $\left\vert x\right\vert ^{2}+\left\vert y\right\vert ^{2}<1\}$ is a model
case. The general elements of this class, which we will call \textit{real
geodesic disks,} are the images of $J$ under the action of
$\operatorname*{Aut}\left(  \mathbb{B}^{n}\right)  .$ $J$ is isometric to the
real hyperbolic plane, $\mathbb{RH}^{2}$; however it is the Beltrami-Klein
model of that plane, not the more familiar Poincare model. In the
Beltrami-Klein model the geodesics are Euclidean straight line segments. The
Poincare model is a conformal model of $\mathbb{RH}^{2},$ the Beltrami-Klein
model is not. \ More discussion and useful figures are in \cite[Section
3.1.9]{Go}.

The other class of totally geodesic submanifolds of real dimension two
consists of \textit{complex geodesics}. The horizontal slice $L=\left\{
\left\{  z,0,,...,0\right\}  :\left\vert z\right\vert <1\right\}  ,$ which is
isometric to $\mathbb{CH}^{1},$ is a model case, the others are the images of
$L$ under the automorphism group.

These classes also have higher dimensional analogs.

\section{Numerical Parameters\label{numerical}}

Fix, for this section: a RKHS $H$ with associated set $\left\{  x_{i}\right\}
_{i=1}^{n}=X=X(H),$ kernel functions $\left\{  k_{i}\right\}  _{i=1}^{n},$ and
multiplier algebra $A=\operatorname*{Mult}\left(  H\right)  .$

We are not supposing that $H=DA_{n}(X).$ However if $H$ is of that form then,
recalling the relation\ $k_{xy}=-1/\left[  y,x\right]  $ discussed in Section
\ref{complex hyperbolic space}, the parameters we describe can also be
regarded as functionals of $X.$ Furthermore, noting the discussion in Section
\ref{invariance}, the values only depend on the congruence class of $X.$

\subsection{Invariant Parameters From the Gram Matrix}

The Gram matrix of $H$ is $G(H)=\left(  k_{ij}\right)  _{i,j=1}^{n}.$ Those
matrix entries change when $H$ is rescaled, but there are quantities built
from those numbers which are invariant under rescaling and which will be
useful. The first is the distance function $\delta=\delta_{H}$ which we
introduced in (\ref{def d}). Here are several others.

\subsubsection{The Angular Invariant}

For $x,y,z\in X(H)$ we define the \textit{angular invariant} $A(x,y,z),$ by%
\begin{equation}
A(x,y,z)=\arg k_{xy}k_{yz}k_{zx}=\arg\widehat{k_{xy}}\widehat{k_{yz}%
}\widehat{k_{zx}}. \label{define A}%
\end{equation}
where we take $\left\vert \arg\left(  \zeta\right)  \right\vert \leq\pi.$
(When working with classical function spaces$,$ the ambiguity in $\arg\left(
\cdot\right)  $ is often removed by specifying that $\arg k_{xy}$ is a
continuous function of both variables and vanishes when $x=y.$ In that case,
as shown by the family of spaces of holomorphic functions on the disk with
$k_{z}(w)=$ $(1-zw)^{-\lambda}$ for $\lambda>0,$ there is no natural upper
bound for $A,)$ As with $\delta_{ij},$ we will write $A_{ijk}.$ The
interpretation of these invariants is subtle, we discuss it in Sections
\ref{about A} and \ref{projection}.

The $A_{ijk}$ are unchanged by rescaling of $H,$ and unchanged by cyclic
permutation of the indices, however they change sign when adjacent indices are
interchanged. Also, it is straightforward from the definitions that $A$
satisfies a cocycle identity; for indices $i,j,k,l$%
\begin{equation}
A_{i,j,k}-A_{i,j,l}+A_{i,k,l}-A_{j,k,l}=0. \label{cocycle}%
\end{equation}

\subsubsection{MQ Matrices}

In Section \ref{n>3} we will work with the matrices $MQ_{r}(H)$ used by
McCullough and Quiggen in characterizing $H$ with the CPP \cite[Thm \ 7.6]{AM}.

Suppose $H$ is $n$ dimensional. For $1\leq r\leq n$ define the $\left(
n-1\right)  \times$ $\left(  n-1\right)  $ matrices $MQ_{r}(H)$, by%
\begin{equation}
MQ_{r}(H)=MQ_{r}(X(H))=\left(  1-\frac{k_{ir}k_{rj}}{k_{ij}k_{rr}}\right)
_{\substack{1\leq i,j\leq n \\i\neq r,j\neq r}}. \label{MQ}%
\end{equation}

\subsubsection{LF}

Later we will also find the following invariants useful:%

\begin{equation}
LF_{123}^{2}=LF^{2}(x_{1},x_{2},x_{3})=\frac{1}{\delta_{12}^{2}}\left\vert
1-\frac{k_{21}k_{13}}{k_{23}k_{11}}\right\vert ^{2}=\frac{1}{\delta_{12}^{2}%
}\left\vert 1-\left\vert \frac{k_{21}k_{13}}{k_{23}k_{11}}\right\vert
e^{iA_{213}}\right\vert ^{2}, \label{L}%
\end{equation}
with similar notation for other indices. We will describe the geometric
interpretation of this quantity and the reason for its name in Section
\ref{geometry}.

\subsection{Describing Spaces and Counting Parameters\label{describing}}

Describing $H$ requires that we specify the basis $\left\{  k_{i}\right\}  $
of $\mathbb{C}^{n}$ and that requires $n^{2}$ complex parameters, $2n^{2}$
real parameters. If we are only interested in the $\left\{  k_{ij}\right\}  $,
the entries of $G(H),$ then the number is reduced; $G(H)$ is positive and
hence determined by $n^{2}$ real parameters. Further, if we only consider
equivalences classes modulo rescaling, then we have larger equivalence classes
and fewer parameters. The rescaling is determined by the matrix $\Gamma$ in
(\ref{modified gram}). That matrix is determined by $2n$ real parameters, but
the comment there about $\alpha$ shows that the rescaling is actually
described by $2n-1$ parameters. Thus, our count of real parameter is $n^{2}$
for the Gram matrices, diminished by $2n-1$ for possible rescalings, a total
of $\left(  n-1\right)  ^{2}.$ That is also the number of parameters required
to describe a configuration of $n$ points in complex hyperbolic space modulo automorphisms.

We are particularly interested in describing $H,$ up to rescaling, using
geometric data about $X(H)$. The $\left\{  \delta_{ij}\right\}  $ are part of
the answer, but, already in dimension $n=3$, there are too few of them. The
previous discussion suggests we need four parameters, and the distances only
provide three. For a fourth we will use the angular invariant $A(x,y,z)$ given
in (\ref{define A}).

For instance, a three dimensional space can be rescaled as a space $H$ with
Gram matrix
\begin{equation}
G(H)=%
\begin{pmatrix}
1 & 1 & 1\\
1 & k_{22} & k_{23}\\
1 & \overline{k_{23}} & k_{33}%
\end{pmatrix}
\label{three k}%
\end{equation}
with $k_{22},k_{33}>0.$ Thus the set $\kappa=\{k_{22},$ $k_{33},$
$\operatorname{Re} $ $k_{23},$ $\operatorname{Im}k_{23}\}$ is a set of
$\left(  3-1\right)  ^{2}=4$ real numbers which determine the Gram matrix, and
hence describes $H$ up to rescaling. The set $\delta=\{\delta_{12},\delta
_{13},\delta_{23},A_{123}\}$ carries the same information. We can write the
elements of $\delta$ in terms of the elements of $\kappa.$
\[
\delta=\left\{  1-\frac{1}{k_{22}},\text{ }1-\frac{1}{k_{33}},\text{ }%
1-\frac{\left\vert k_{23}\right\vert ^{2}}{k_{22}k_{33}},\text{ }\arg
k_{23}\right\}  ;
\]
and the passage from $\delta$ to $\kappa$ is similarly straightforward. Our
preference here is for the set of invariants $\delta.$ Those numbers are
invariant under rescaling and they also determine the Gram matrix of a
rescaled version of $H.$ Furthermore, if $H$ has the CPP and hence is of the
form $H\sim DA_{n}(X)$ for some $X,$ those numbers are geometric invariants of
$X$ which determine $X$ up to congruence (Theorem \ref{n=3} below).

A similar analysis holds if $H$ is $n$-dimensional. After basepoint rescaling
$G(H)$ is determined by the $(n-1)^{2}$ real parameters
\begin{equation}
J(X)=\left\{  \delta_{ij}:1\leq i<j\leq n\right\}  \cup\left\{  A_{1rs}%
:1<r<s\leq n\right\}  . \label{G(X)}%
\end{equation}
Again, these numbers are rescaling invariants and it is mechanical to pass
between this set and the entries of $G(H).$ Taking note of Theorems \ref{n=3}
and \ref{reduction} we see that if $H\sim DA_{m}(X)$ then these numbers also
determine the congruence class of $X.$

\subsection{Larger Spaces\label{larger spaces}}

Suppose we are given spaces $H\subset H^{\prime},$ Given $x,y\in X(H)\subset
X(H^{\prime})$ we could measure the distance between $x$ and $y$ two ways;
$\delta_{H}(x,y)$ and $\delta_{H^{\prime}}(x,y).$ In fact, however, those two
values are the same, and a similar comment holds for many of the invariants we
consider. The invariant $\delta_{H}$ defined by (\ref{def d}), as well as
$A_{ijk}$, and $LF_{ijk},$ are defined using entries of the Gram matrix $G(H)$
and those matrix entries do not change when $G(H)$ is included in the natural
way as a submatrix of $G(H^{\prime}).$ Other invariants, such $\delta_{H}$
defined using (\ref{extremal d}) or $\Delta_{H}$ defined in (\ref{delta h})
below, are defined using extremal problems which involve quantifying over all
elements of $H.$ In those cases, the analogous extremal problem for
$H^{\prime},$ involving quantifying over all of $H^{\prime},$ is not formally
equivalent to the first problem. However in the problems we consider the two
different extremal problems produce the same extremal value. That happens
because in those problems if $h^{\prime}\in H^{\prime}$ is a candidate to
solve the extremal problem formulated in $H^{\prime},$ then $h,$ the
orthogonal projection of $h^{\prime}$ onto $H,$ will give a superior
candidate, one that meets the same conditions and has smaller norm. In those
cases the larger set of candidates affects neither the value of the extremal,
nor even the identity of the extremal function.

The situation with invariants such as $\delta_{G}$ defined in
(\ref{extremal g}) and $\Delta_{G}$ defined in (\ref{delta g}) is more subtle.
It $H$ is a subspace of $H^{\prime}$ then, algebraically,
$\operatorname*{Mult}\left(  H\right)  $ is the quotient of
$\operatorname*{Mult}\left(  H^{\prime}\right)  $ by the ideal of functions
which vanish on $X(H^{\prime})\smallsetminus X(H).$ However, in general there
is no reason that the quotient norm should agree with the operator norm on
$\operatorname*{Mult}\left(  H\right)  ,$ which is what would insure that the
values of $\delta_{G}$ and $\Delta_{G}$ were not influenced by bringing the
larger space $H^{\prime}$ into consideration. In fact, it is exactly the
statement that $H^{\prime}$ has the CPP which insures that the quotient norm
for $\operatorname*{Mult}\left(  H\right)  $ is the same as that operator
norm. In all the cases where we consider a space $H$ and there is a larger,
containing, space $H^{\prime}$ lurking in the discussion, this will be the case.

\subsection{Extremal Problems and{}\label{extremal problems} Generalized
Distances}

We described distance $\delta_{G}$ and $\delta=\delta_{H}$ on $X$ in terms of
extremal problems (\ref{extremal g}) and (\ref{extremal d}). We now introduce
generalizations of those quantities. For $x,y,z\in X$ set
\begin{align}
\Delta_{H}(x;y,z)  &  =\frac{1}{\left\Vert k_{x}\right\Vert }\sup\left\{
\operatorname{Re}j(x):j\in H\text{, }j(y)=j(z)=0,\left\Vert j\right\Vert
_{H}\leq1\right\}  .\label{delta h}\\
\Delta_{G}(x;y,z)  &  =\sup\left\{  \operatorname{Re}m(x):m\in A\text{,
}m(y)=m(z)=0,\left\Vert m\right\Vert _{A}\leq1\right\}  \label{delta g}%
\end{align}

\noindent Both of these are invariant.

Suppose $m_{\delta}\in\operatorname*{Mult}\left(  H\right)  $ and $h_{\delta
}\in H $ \ are the functions which attain the extreme values in
(\ref{extremal g}) and (\ref{extremal d}) respectively. It then follows from
the definitions that $m_{\delta}\hat{k}_{x}$ is a competitor for the extremal
problem which defines $h_{\delta},$ and hence $\delta_{G}\leq\delta_{H}.$ \ A
completely analogous argument, with $M_{\Delta}\in\operatorname*{Mult}\left(
H\right)  $ and $H_{\Delta}\in H$ the extremal functions for the problems
(\ref{delta g}) and (\ref{delta h}), shows that $\Delta_{G}\leq\Delta_{H}.$

The distinctive feature of RKHS with the CPP is that there is a particularly
close relation between extremal problems in the multiplier algebra and in the
space. In the particular case we just described the two inequalities are, in
fact, equalities. The following is a special case of \cite[Theorem 9.33]{AM}.

\begin{proposition}
\label{same delta}If $H$ has the CPP then the functions $m_{\delta},$
$h_{\delta}.$ $M_{\Delta}$ and $H_{\Delta}$ are unique and satisfy
\begin{equation}
m_{\delta}\hat{k}_{x}=h_{\delta}\text{ and }M_{\Delta}\hat{k}_{x}=H_{\Delta}.
\label{mk=h}%
\end{equation}
In particular
\begin{equation}
\delta_{G}=\delta_{H}\text{ and }\Delta_{G}=\Delta_{H}. \label{d=d}%
\end{equation}

\end{proposition}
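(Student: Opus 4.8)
The plan is to take the two inequalities $\delta_G\le\delta_H$ and $\Delta_G\le\Delta_H$ as given (they are the competitor argument recorded just above the statement) and to supply the reverse inequalities, the factorization identities, and uniqueness. First I would record the explicit form of the Hilbert-space extremals. Maximizing $\operatorname{Re}\langle h,k_x\rangle$ over $\{h:h(y)=0,\ \Vert h\Vert\le1\}$ is maximizing a linear functional over the intersection of the unit ball with a hyperplane, so the maximizer is the normalized projection $h_\delta=(I-P_y)k_x/\Vert(I-P_y)k_x\Vert$, where $P_y$ is the projection onto $\mathbb{C}k_y$; it is unique by strict convexity of the Hilbert sphere, and it yields $\delta_H(x,y)=\Vert(I-P_y)k_x\Vert/\Vert k_x\Vert$. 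Identically, with $P_{y,z}$ the projection onto $\operatorname{span}(k_y,k_z)$, the extremal $H_\Delta$ for \eqref{delta h} is $(I-P_{y,z})k_x/\Vert(I-P_{y,z})k_x\Vert$ and $\Delta_H(x;y,z)=\Vert(I-P_{y,z})k_x\Vert/\Vert k_x\Vert$.

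Second, I would obtain the value equalities from the Pick property. A contractive multiplier with $m(y)=0$ and $m(x)=a$ exists if and only if the two-point Pick matrix $\bigl(\begin{smallmatrix}(1-|a|^2)k_{xx}&k_{xy}\\ k_{yx}&k_{yy}\end{smallmatrix}\bigr)$ is positive semidefinite; its determinant is $(1-|a|^2)k_{xx}k_{yy}-|k_{xy}|^2$, which, using $|k_{xy}|^2/(k_{xx}k_{yy})=1-\delta_H^2$ from \eqref{def d}, is nonnegative exactly when $|a|\le\delta_H(x,y)$. Hence $\delta_G(x,y)=\delta_H(x,y)$. For $\Delta$ the same reasoning uses the three-point Pick matrix: taking the Schur complement against the positive definite Gram block $G_{\{y,z\}}$ and invoking the elementary identity $\Vert P_{y,z}k_x\Vert^2=w^{\ast}G_{\{y,z\}}^{-1}w$ for the vector $w$ of cross inner products reduces positivity to $|a|^2k_{xx}\le\Vert(I-P_{y,z})k_x\Vert^2$, i.e. $|a|\le\Delta_H(x;y,z)$, so $\Delta_G=\Delta_H$. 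This step is the only place the hypothesis is used, and the scalar Pick property already suffices, in accordance with the remark in Section 2.4.

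Third, for the factorization identities I would note that the multiplier unit ball is compact (everything here is finite dimensional), so extremal multipliers $m_\delta,M_\Delta$ exist. Put $g=m_\delta\hat k_x$. Then $\Vert g\Vert_H\le\Vert m_\delta\Vert\,\Vert\hat k_x\Vert\le1$, $g(y)=m_\delta(y)\hat k_x(y)=0$, and since $\hat k_x(x)=\Vert k_x\Vert$ one gets $\Vert k_x\Vert^{-1}\operatorname{Re}g(x)=\operatorname{Re}m_\delta(x)=\delta_G=\delta_H$; thus $g$ attains the supremum in \eqref{extremal d}, and by uniqueness of $h_\delta$ it equals $h_\delta$. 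The identity $M_\Delta\hat k_x=H_\Delta$ follows verbatim from \eqref{delta g} and \eqref{delta h}. Finally, irreducibility forces $\hat k_x(w)=k_{xw}/\Vert k_x\Vert\neq0$ for every $w\in X$, so the identity $m_\delta\hat k_x=h_\delta$ determines $m_\delta=h_\delta/\hat k_x$ pointwise on $X$; this delivers uniqueness of $m_\delta$, and likewise of $M_\Delta$, for free.

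The substantive step, and the expected obstacle, is the reverse value inequality in the second paragraph: it is precisely the converse extension direction of the Pick property and cannot be extracted from Hilbert-space geometry alone. Everything else is the explicit extremal computation, compactness, and the nonvanishing of $\hat k_x$ coming from irreducibility. One could instead quote \cite[Theorem 9.33]{AM} wholesale, but the argument above makes transparent that only the two- and three-point scalar Pick problems are actually invoked.
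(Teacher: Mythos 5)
Your proof is correct. The paper supplies no argument of its own for this proposition --- it is simply quoted as a special case of \cite[Theorem 9.33]{AM} --- so your route is necessarily different: a direct, self-contained verification. The comparison is worth making explicit. Quoting the general theorem buys the statement in full generality (infinite-dimensional spaces, matrix-valued interpolation) at the cost of opacity; your argument is elementary and isolates exactly what is used, namely the two- and three-point scalar Pick problems, which is consistent with the paper's remark in Section 2.4 that the scalar Pick property often suffices. Your identification of the Hilbert-space extremals as normalized projections is right (uniqueness from strict convexity, and $\delta_H=\Vert(I-P_y)k_x\Vert/\Vert k_x\Vert$ recovers (\ref{def d})); the Schur-complement reduction of the three-point Pick matrix against the positive definite block $G_{\{y,z\}}$ correctly yields $|a|\le\Delta_H$; and the factorization and uniqueness steps are sound, with irreducibility ($k_{xy}\neq0$ for all $x,y$) doing precisely the work you assign it in dividing by $\hat k_x$. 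One small presentational caveat: in your second paragraph the ``if'' direction of the asserted equivalence (Pick matrix PSD implies existence of a contractive multiplier \emph{on all of $H$}) is itself the extension step --- the Pick matrix condition only produces a contractive multiplier on the two- or three-point subspace, and the Pick property extends it without increasing the norm. You acknowledge this in your closing paragraph, but it would be cleaner to say it where the equivalence is first stated.
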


It is straightforward to solve the extremal problem (\ref{extremal d}) and
obtain a formula for $h_{x}.$ Using that and (\ref{mk=h}) then gives a formula
for $m_{x}.$ The two formulas are:%

\begin{align}
k_{\delta}(\cdot)  &  =\frac{1}{\left\Vert k_{x}\right\Vert \delta
(x,y)}\left(  k_{x}\left(  \cdot\right)  -\frac{k_{xy}k_{y}\left(
\cdot\right)  }{k_{yy}}\right) \label{hilbert space formula}\\
m_{xy}(\cdot)  &  =m_{\delta}(\cdot)=\frac{1}{\delta(x,y)}\left(
1-\frac{k_{xy}k_{y}\left(  \cdot\right)  }{k_{yy}k_{x}\left(  \cdot\right)
}\right)  . \label{multiplier formula}%
\end{align}

There are also some simple relations between the $\delta$'s and the $\Delta
$'s; for $x,y,z\in X,$%
\begin{align}
\delta_{G}(x,y)\delta_{G}(x,z)  &  \leq\Delta_{G}(x;y,z)\leq\delta
_{G}(x,y)\wedge\delta_{G}(x,z),\label{m inequality}\\
\Delta_{H}(x;y,z)  &  \leq\delta_{H}(x,y)\wedge\delta_{H}(x,z).\nonumber
\end{align}
The left inequality in the first line holds because the product of competitors
in the extremal problems defining the $\delta_{G}$'s is a competitor for the
extremal problem defining $\Delta_{G}.$ The other estimates hold because of
the monotonicity of the solution to a restricted maximum problem when the
restrictions are loosened.

\subsubsection{Evaluating $\Delta_{G}$ and $\Delta_{H}$}

We now evaluate the quantities $\Delta_{G}$ and $\Delta_{H}$ for spaces $H$
with the CPP. Thus $H\sim DA_{n}(X).$ and by Proposition \ref{same delta}
$\delta_{G}=\delta_{H},$ $\Delta_{G}=\Delta_{H}.$ We will generally drop the subscripts.

\begin{theorem}
\label{three point extremals}Suppose $H$ is a RKHS with the CPP and
$X(H)=\left\{  x_{i}\right\}  _{i=1}^{n}.$ Then
\begin{align}
\Delta_{G}^{2}=\Delta_{H}^{2}  &  =\frac{1}{\delta_{23}^{2}}\left(
\delta_{23}^{2}+\delta_{12}^{2}+\delta_{13}^{2}-2+2\operatorname{Re}%
\widehat{k_{12}}\widehat{k_{23}}\widehat{k_{31}}\right) \label{deltaa}\\
&  =\delta_{12}^{2}\delta_{23}^{-2}\left(  \delta_{13}^{2}-LF_{123}^{2}\left(
1-\delta_{23}^{2}\right)  \right) \label{delta with fl}\\
&  =\delta_{12}^{2}\left(  \delta_{23}^{-2}\left(  \delta_{13}^{2}%
-LF_{123}^{2}\right)  +LF_{123}^{2}\right)  \label{if lf - 1}%
\end{align}

\end{theorem}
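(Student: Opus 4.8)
The plan is to reduce the three displayed identities to a single orthogonal-projection computation in $H$, followed by two elementary algebraic rearrangements. I read the subscripts so that $\Delta_H = \Delta_H(x_1;x_2,x_3)$, with $x_1$ the evaluation point and $x_2,x_3$ the vanishing points; this is what matches the prefactor $\delta_{23}^{-2}$, which will emerge as the reciprocal of the Gram determinant of $\{k_2,k_3\}$. Since $\Delta_H$, the $\delta_{ij}$, and the normalized products $\widehat{k_{ij}}$ are all rescaling invariant (Section~\ref{invariance}) and the right-hand sides are built only from these, I would first pass to the normalized-kernel rescaling and assume $k_{ii}=1$, so that $k_{ij}=\widehat{k_{ij}}$ and $\delta_{ij}^2 = 1-|k_{ij}|^2$.

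First I would dispose of the reduction $\Delta_G=\Delta_H$: since $H$ has the CPP, this is precisely the content of Proposition~\ref{same delta}, so it suffices to evaluate $\Delta_H$, and the CPP plays no further role. To compute $\Delta_H(x_1;x_2,x_3)$ I solve the extremal problem (\ref{delta h}) exactly as (\ref{extremal d}) is solved: the admissible $j$ are the unit vectors of $W^\perp$, where $W=\operatorname{span}\{k_2,k_3\}$, and $\operatorname{Re}j(x_1)=\operatorname{Re}\langle j,k_1\rangle$ is maximized by the normalized projection of $k_1$ onto $W^\perp$. Hence $\Delta_H^2 = 1-\|Qk_1\|^2$ under unit normalization, where $Q$ is the orthogonal projection onto $W$.

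Next I would evaluate $\|Qk_1\|^2$ by writing $Qk_1=\alpha k_2+\beta k_3$ and solving the normal equations $\langle k_1-Qk_1,k_2\rangle=\langle k_1-Qk_1,k_3\rangle=0$; the determinant of the $2\times 2$ system is $k_{22}k_{33}-|k_{23}|^2=\delta_{23}^2$, which supplies the factor $\delta_{23}^{-2}$. Substituting back gives $\|Qk_1\|^2=\delta_{23}^{-2}\bigl(|k_{12}|^2+|k_{13}|^2-2\operatorname{Re}(k_{12}k_{23}k_{31})\bigr)$, and replacing $|k_{ij}|^2=1-\delta_{ij}^2$ in $1-\|Qk_1\|^2$ yields (\ref{deltaa}) verbatim. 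The one genuinely error-prone point, and the main obstacle, is the conjugate bookkeeping in the cross term: solving the normal equations directly (rather than forming a quadratic form $d^\ast G_W^{-1}d$) makes the two cross contributions manifest complex conjugates, so their sum is $2\operatorname{Re}(k_{12}k_{23}k_{31})=2\operatorname{Re}(\widehat{k_{12}}\widehat{k_{23}}\widehat{k_{31}})$ and not a variant twisted by an extra factor $\arg k_{23}$.

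Finally, (\ref{delta with fl}) and (\ref{if lf - 1}) are algebra. For (\ref{delta with fl}) I would verify, still under unit normalization, the identity $\delta_{23}^2\Delta_H^2=\delta_{12}^2\delta_{13}^2-|k_{23}-k_{21}k_{13}|^2$, using that clearing the modulus denominator in the definition (\ref{L}) gives $\delta_{12}^2 LF_{123}^2(1-\delta_{23}^2)=|k_{23}-k_{21}k_{13}|^2$. Expanding $|k_{23}-k_{21}k_{13}|^2=|k_{23}|^2-2\operatorname{Re}(k_{12}k_{23}k_{31})+|k_{12}|^2|k_{13}|^2$ and $\delta_{12}^2\delta_{13}^2=(1-|k_{12}|^2)(1-|k_{13}|^2)$, the terms $|k_{12}|^2|k_{13}|^2$ cancel and the numerator of (\ref{deltaa}) reappears. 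Equation (\ref{if lf - 1}) then follows from (\ref{delta with fl}) by writing $-LF_{123}^2(1-\delta_{23}^2)=-LF_{123}^2+\delta_{23}^2 LF_{123}^2$ and distributing $\delta_{23}^{-2}$, which converts the last summand into $\delta_{12}^2 LF_{123}^2$.
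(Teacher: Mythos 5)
Your proof is correct and follows essentially the same route as the paper's: both dispose of $\Delta_G=\Delta_H$ by Proposition \ref{same delta}, then solve the Hilbert-space extremal problem (\ref{delta h}) exactly and reduce $\delta_{23}^{2}\Delta_H^{2}$ to a quotient of Gram determinants before rearranging algebraically into (\ref{delta with fl}) and (\ref{if lf - 1}). The only cosmetic difference is that you compute $\left\Vert P_{W^{\perp}}k_{1}\right\Vert^{2}$ from the $2\times2$ normal equations for the projection onto $\operatorname{span}\{k_{2},k_{3}\}$ (which incidentally makes the reduction to $\dim H=3$ automatic), whereas the paper computes the norm of the dual vector $v$ taking values $1,0,0$ via Cramer's rule on the inverse of the full $3\times3$ Gram matrix; the resulting determinant quotient is the same.
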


In the next section describe geometric conditions on $X\ $ which correspond to
having $\Delta_{G}=\Delta_{H}$ simplify to $\delta_{12}\delta_{13}/\delta
_{23}$, or to $\delta_{12}\delta_{13},\ $or to $\delta_{12}$.

\begin{proof}
Taking note of the discussion in Section \ref{larger spaces} we may assume $H
$ is three dimensional. Using the definitions and some algebra, including the
fact that $2\operatorname{Re}k_{23}=\left\vert 1-k_{23}\right\vert
^{2}-1-\left\vert k_{23}\right\vert ^{2},$ the formulas (\ref{deltaa}) and
(\ref{delta with fl}) are equivalent. Line (\ref{if lf - 1}) is an algebraic
rewriting of (\ref{delta with fl}) which will be convenient later.

We now compute $\Delta_{H}^{2}.$ Let $v\in H$ be the function which takes the
values $1,0,0$ \ at $x_{1},$ $x_{2},$ and $x_{3}.$ $v$ spans the one
dimensional subspace of functions in $H$ which vanish at $x_{2}$ and $x_{3}.$
Hence $v/\left\Vert v\right\Vert $ is the extremal function in the problem
defining $\Delta_{H}$ and so $\Delta_{H}=\left(  \left\Vert k_{1}\right\Vert
\left\Vert v\right\Vert \right)  ^{-1}.$ We now compute $\left\Vert
v\right\Vert .$ The vector $v$ can be written as $v=\sum\nolimits_{i=1}%
^{3}b_{i}k_{i}$ for scalars $\left\{  b_{i}\right\}  .$ By evaluating at the
$x_{i}$ and comparing with $V=(1,0,0)$ we get a system of equations for the
$\left\{  b_{i}\right\}  \ $which we write in matrix form. Let $K=G(H)$ and
set $B=(b_{1},b_{2},b_{3}).$ Here and later we will use $T^{t}$ to denote the
transpose of the matrix $T.$ We have $BK=V$ and hence, \ setting
$K^{-1}=\left(  \gamma_{ij}\right)  ,$ we have
\begin{align*}
\left\Vert V\right\Vert ^{2}  &  =BK\bar{B}^{t}=\left(  VK^{-1}\right)
K\overline{\left(  VK^{-1}\right)  }^{t}\\
&  =V\overline{K^{-1}}^{t}\overline{\,V^{t}}=VK^{-1}V^{t}=\left(  \gamma
_{11}\right)  ,
\end{align*}
Thus our solution is
\[
\Delta_{H}^{2}=\frac{1}{k_{11}\gamma_{11}}.
\]
We now compute $\gamma_{11}$ using Cramer's rule.

Let $K_{1\rightarrow V^{t}}$ be the matrix obtained from $K$ by replacing the
first column of $K$ with the column $V^{t}.$ Cramer's rule tells us that
$\gamma_{11}=\det$ $K_{1\rightarrow V^{t}}/\det K.$ Thus our solution is
\begin{align}
\Delta_{H}^{2}  &  =\frac{\det K}{k_{11}\det K_{1\rightarrow V^{t}}%
}\label{delta}\\
&  =\frac{k_{11}k_{22}k_{33}+2\operatorname{Re}k_{12}k_{23}k_{31}%
-k_{11}\left\vert k_{23}\right\vert ^{2}-k_{22}\left\vert k_{13}\right\vert
^{2}-k_{33}\left\vert k_{12}\right\vert ^{2}}{k_{11}\left(  k_{22}%
k_{33}-\left\vert k_{23}\right\vert ^{2}\right)  }.\nonumber
\end{align}
Dividing top and bottom by $k_{11}k_{22}k_{33}$ we get
\[
\Delta_{H}^{2}=\frac{1+2\operatorname{Re}\widehat{k_{12}}\widehat{k_{23}%
}\widehat{k_{31}}-|\widehat{k_{23}}|^{2}-|\widehat{k_{12}}|^{2}%
-|\widehat{k_{13}}|^{2}}{1-|\widehat{k_{23}}|^{2}}.
\]
Recalling that $\delta_{ij}^{2}=1-|\widehat{k_{ij}}|^{2}$ we can rewrite that
as%
\begin{equation}
\delta_{23}^{2}\Delta_{H}^{2}=\delta_{23}^{2}+\delta_{12}^{2}+\delta_{13}%
^{2}-2+2\operatorname{Re}\widehat{k_{12}}\widehat{k_{23}}\widehat{k_{31}},
\label{formula for H}%
\end{equation}
which is what we wanted.

Finally, by Proposition \ref{same delta} we also obtain the result for
$\Delta_{G}.$

An alternative proof, computing $\Delta_{G}^{2}$ using the Pick matrix of the
associated multiplier extremal problem, is of comparable length.
\end{proof}

\section{Modifying Spaces and Sets}

\subsection{Rescalings and Automorphisms; Normal Form\label{normal}}

The involutive automorphisms of the ball, $\varphi_{a},$ satisfy a number of
useful identities \cite{Ru}.\ For $a,z,w\in\mathbb{B}^{n},$ and $k$ the
$DA_{n}$ kernel function,
\begin{align}
\left\vert \varphi_{a}\left(  z\right)  \right\vert ^{2}  &  =1-\frac
{(1-\left\vert a\right\vert ^{2})(1-\left\vert z\right\vert ^{2})}{\left\vert
1-\overline{a}\cdot z\right\vert ^{2}},\label{Ru1}\\
\frac{1}{1-\left(  \varphi_{a}\left(  w\right)  ,\varphi_{a}\left(  z\right)
\right)  }  &  =\frac{\left(  1-\left(  w,a\right)  \right)  }{\left(
1-\left(  a,a\right)  \right)  ^{1/2}}\frac{\left(  1-\left(  a,z\right)
\right)  }{\left(  1-\left(  a,a\right)  \right)  ^{1/2}}\frac{1}{\left(
1-\left(  w,z\right)  \right)  },\label{Ru2}\\
k_{z}(w)  &  =\frac{k_{z}\left(  a\right)  }{k_{a}(a)^{1/2}}\frac
{\overline{k_{w}\left(  a\right)  }}{k_{a}(a)^{1/2}}k_{\varphi_{a}\left(
z\right)  }(\varphi_{a}\left(  w\right)  ). \label{Ru3}%
\end{align}
\qquad

There is a natural identification of $X(DA_{n})$ with $\mathbb{B}^{n}.$ Using
that identification the metric $\delta_{DA_{n}}$ can be regarded as a metric
on $\mathbb{B}^{n}$ and that metric equals the pseudohyperbolic metric $\rho$
on $\mathbb{B}^{n}=\mathbb{CH}^{n}.$ This can be seen from (\ref{Ru1}) where
the left side is $\rho(a,z)^{2},$ the square of the pseudohyperbolic distance
between $a$ and $z,$ and the right side is $\delta_{DA_{n}}(a,z)^{2}.$

By comparing (\ref{rescaling 2}) and (\ref{Ru3}) we see that automorphisms of
the ball induce rescalings$;$ if $X=\left\{  x_{i}\right\}  $ is a finite
subset of $\mathbb{B}^{n}$ and $\Phi\in$ $\operatorname*{Aut}\left(
\mathbb{B}^{n}\right)  $ then $DA_{n}(X)\sim DA_{n}(\Phi\left(  X\right)  ).$
We now introduce a notion of normal form for a set in $\mathbb{B}^{n}$ and use
it to prove a converse statement; if $Y\subset\mathbb{B}^{n}$ and
$DA_{n}(X)\sim DA_{n}(Y)$ then $X$ and $Y$ are congruent, $X\sim Y.$

We say a finite ordered set $X=\left\{  x_{i}\right\}  _{i=1}^{M}%
\subset\mathbb{CH}^{n}=\mathbb{B}^{n}$ is in normal form, $X\in\mathcal{N},$
if the coordinate description of $X$ with respect to the standard orthonormal
basis, $\left\{  e_{i}\right\}  _{i=1}^{n},$ of $\mathbb{C}^{n}$ takes the
following roughly triangular form. The first point, $x_{1,}$ is at the origin,
and the coordinates of the remaining points have the form
\[
x_{j}=(\alpha_{j1},\alpha_{j2},...,\alpha_{jN(j)},0,...,0)
\]
with $\left\{  N(k)\right\}  $ a nondecreasing. sequence with differences,
$N(k+1)-N(k),$ always $0$ or $1.$ We further require the positivity conditions
that if $N(k+1)>N(k)$ then $a_{(k+1)\,N(k+1)}>0.$

Let $\mathcal{N}$ be collection of sets in normal form$.$

\begin{proposition}
Suppose $X$ is a finite ordered set, $X=\left\{  x_{i}\right\}  _{i=1}^{M},$
contained in $\mathbb{B}^{n}=\mathbb{CH}^{n}.\ $There is a unique $\Psi_{X}%
\in\operatorname*{Aut}\left(  \mathbb{B}^{n}\right)  $ such that $Y=\Psi
_{X}\left(  X\right)  \in\mathcal{N}$. In particular there is exactly one
$Y\in\mathcal{N}$ with $Y\sim X$.
\end{proposition}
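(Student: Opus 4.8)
The plan is to construct $\Psi_X$ explicitly by composing a sequence of automorphisms that progressively move the points into normal form, processing the points $x_1, x_2, \ldots, x_M$ in order. Since $\operatorname{Aut}(\mathbb{B}^n)$ acts transitively, I first apply the involution $\varphi_{x_1}$ to send $x_1$ to the origin. Once the origin is fixed, the remaining freedom is exactly the unitary group $U(n)$ (by the stated fact that any automorphism fixing $0$ is unitary), so the rest of the reduction is linear. The strategy is then a Gram--Schmidt-style induction: having arranged $x_1,\ldots,x_k$ so that they lie in the span of $e_1,\ldots,e_{N(k)}$ in the required triangular form, I use a unitary fixing that span to rotate $x_{k+1}$ into the correct shape, either leaving $N(k+1)=N(k)$ (if $x_{k+1}$ already lies in $\operatorname{span}\{e_1,\ldots,e_{N(k)}\}$) or setting $N(k+1)=N(k)+1$ and using the rotational freedom in the $e_{N(k)+1}$ direction to make the new coordinate $\alpha_{(k+1)\,N(k+1)}$ real and positive.

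\textbf{Existence.} First I would apply $\varphi_{x_1}\in\operatorname{Aut}(\mathbb{B}^n)$ to assume $x_1=0$. Then, inductively, suppose the first $k$ points are in normal form within $\operatorname{span}\{e_1,\ldots,e_{N(k)}\}$. Consider $x_{k+1}$. If its component orthogonal to that span vanishes, set $N(k+1)=N(k)$ and there is nothing to do. Otherwise, choose a unitary $U$ that is the identity on $\operatorname{span}\{e_1,\ldots,e_{N(k)}\}$ and maps the orthogonal component of $x_{k+1}$ to a positive multiple of $e_{N(k)+1}$; such a $U$ fixes $0$ and all previously placed points, preserves their normal form, and puts $x_{k+1}$ into the required shape with $\alpha_{(k+1)\,N(k+1)}>0$. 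Composing all these maps gives $\Psi_X$ with $\Psi_X(X)\in\mathcal{N}$.

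\textbf{Uniqueness.} This is the part I expect to require the most care. Suppose $\Psi_1(X),\Psi_2(X)\in\mathcal{N}$; then $\Lambda=\Psi_2\Psi_1^{-1}$ is an automorphism carrying one normal form to another. Since both normal forms have their first point at the origin, $\Lambda$ fixes $0$, hence $\Lambda\in U(n)$. I would then show by induction on the points that a unitary preserving the triangular structure must be the identity: the positivity requirement $\alpha_{(k+1)\,N(k+1)}>0$ pins down the phase in each new coordinate direction, and the nondecreasing condition on $N(k)$ forces the unitary to act as the identity on each successively introduced basis vector $e_{N(k)+1}$. The key point is that the combination of the triangular support condition (each $x_j$ uses only the first $N(j)$ coordinates) and the positivity normalization removes all the $U(n)$ ambiguity, exactly as positivity of pivots removes the ambiguity in a $QR$ factorization. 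The main obstacle is verifying that the irreducibility hypothesis on $H$ (equivalently, genericity of $X$) together with these conditions genuinely forces $\Lambda=\mathrm{Id}$ rather than merely constraining it; I would handle the degenerate directions where $N(k+1)=N(k)$ by noting that $x_{k+1}$ then imposes no new normalization but also introduces no new freedom for $\Lambda$ to exploit.

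\textbf{Last sentence.} The final claim that there is exactly one $Y\in\mathcal{N}$ with $Y\sim X$ is then immediate: existence of $\Psi_X$ gives at least one such $Y=\Psi_X(X)$, and if $Y'=\Lambda Y\in\mathcal{N}$ for some $\Lambda\in\operatorname{Aut}(\mathbb{B}^n)$ then the uniqueness argument forces $\Lambda=\mathrm{Id}$, so $Y'=Y$.
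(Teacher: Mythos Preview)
Your proposal is correct and follows essentially the same approach as the paper: apply $\varphi_{x_1}$ to move $x_1$ to the origin, then use the remaining unitary freedom via a Gram--Schmidt-type reduction to achieve the triangular shape, with the positivity condition fixing the phases; for uniqueness, observe that any automorphism carrying one normal form to another fixes $0$, is therefore unitary, and is forced to be the identity on $\operatorname{span}(\Psi_X(X))$ by the triangular-plus-positive structure. The only cosmetic difference is that the paper performs the Gram--Schmidt in one pass (first extracting a linearly independent subsequence $Y$, orthonormalizing it, then applying a single unitary), whereas you do it point-by-point; and note that, as in the paper, $\Lambda$ is pinned down only on the span of the points---this suffices for the uniqueness of $Y\in\mathcal{N}$, even if $\Psi_X$ itself has residual unitary freedom on the orthogonal complement when $X$ does not span $\mathbb{C}^n$.
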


\begin{proof}
First apply the involution $\varphi_{x_{1}}$ to $X.$ That produces a congruent
set with (the new) $x_{1}$ at the origin. Now split $X$ as a disjoint union
$X=\left\{  y_{i}\right\}  _{i=1}^{r}\cup\left\{  z_{j}\right\}  _{j=1}%
^{s}=Y\cup Z$. The set $Y$ is constructed by setting $y_{1}=x_{1}=0$ and then
going through the remaining $x_{r}$'s in the order of their indices and
designating each $x_{r}$ to be the next $y_{i}$ if that $x_{r}$ is not in the
linear span of the $y_{i}$ already selected. Otherwise put $x_{r}$ in $Z.$
Thus, for instance, $y_{2}=x_{2}.$ Now set $v_{1}=0$ and apply the
Gram-Schmidt process to the vectors $y_{2}.....y_{r}$ to produce an
orthonormal sequence $v_{2},....,v_{j}$ with $j-1\leq n.$ If $j-1<n$ then
complete the sequence in an arbitrary way to an orthonormal basis of
$\mathbb{C}^{n}.$ The structure of the Gram-Schmidt process insures that the
coordinate representation of the $\left\{  x_{i}\right\}  $ with respect to
the basis $\left\{  v_{j}\right\}  $ have nonzero entries in the pattern
required for a set in $\mathcal{N}$. Next, replace the basis $\left\{
v_{j}\right\}  $ with an orthonormal basis $\left\{  \alpha_{j}v_{j}\right\}
$ where the $\left\{  \alpha_{i}\right\}  $ are unimodular constants selected
so that the coordinate entries in the positions where positivity is required
are, in fact, positive. This is possible because the positivity rule requires
that each $v_{j}$ be modified at most once.

If the basis $\left\{  \alpha_{j}v_{j}\right\}  $ which we constructed
happened to be the canonical basis $\left\{  e_{i}\right\}  $ we would be
done. Otherwise we now move $X$ using the unitary map $U$ which takes the
elements $\left\{  \alpha_{j}v_{j}\right\}  $ to the elements $\left\{
e_{i}\right\}  .$ This is possible because any two orthonormal bases of
$\mathbb{C}^{n}$ are connected by a unitary map. Because the $\left\{
x_{i}\right\}  $ are linear combinations of the $\left\{  \alpha_{j}%
v_{j}\right\}  $ with coefficients having the desired pattern, the points
$\left\{  Ux_{i}\right\}  $ have coordinate representations in the desired
pattern with respect to the basis $\left\{  U\alpha_{j}v_{j}\right\}
=\left\{  e_{i}\right\}  .$ Finally, recall that any unitary map is in
$\operatorname*{Aut}\left(  \mathbb{B}^{n}\right)  .$ Combining $U$,
$\varphi_{x_{1},}$ and the rotations used to generate the $\alpha_{j}$
produces the required $\Psi_{X}.$

Suppose now there were another automorphism $\widetilde{\Psi}_{X}$ with
$\widetilde{\Psi}_{X}(X)\in\mathcal{N}$. Consider the automorphism
$\Lambda=\widetilde{\Psi}_{X}\Psi_{X}^{-1}.$ Tracing through the definitions
shows $\Lambda(0)=0$ hence $\Lambda$ is a unitary map. Tracing the definitions
again shows that $\Lambda e_{1}$ must be a positive multiple of $e_{1};$ but
$\Lambda$ is unitary and hence $\Lambda e_{1}=e_{1}.$ This pattern continues
through the $e$'s and that is enough to conclude that $\Lambda$ is the
identity on the span of $\Psi_{X}(X).$ That establishes the uniqueness of
$\Psi_{X}$ and hence of the normal form.
\end{proof}

In the proof we possibly did not use all of the dimensions of $\mathbb{B}^{n}
$.

\begin{corollary}
If $X\subset\mathbb{B}^{n},$ $\left\vert X\right\vert =k$ then $X$ $\sim Y$
for some $Y$ in the $\mathbb{B}^{k-1}$in $\mathbb{B}^{n}$ consisting of all
points with their last $n-k+1$ coordinates zero.
\end{corollary}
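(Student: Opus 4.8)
The plan is to deduce the corollary directly from the preceding proposition, which guarantees that every finite ordered $X\subset\mathbb{B}^{n}$ is congruent to a unique set $Y=\Psi_X(X)$ in normal form $\mathcal{N}$. So the whole task reduces to examining how many coordinates a normal-form set on $k$ points can occupy, and checking that this is at most $k-1$.

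First I would recall the coordinate pattern defining $\mathcal{N}$: with $x_1$ at the origin, the $j$-th point has the shape $x_j=(\alpha_{j1},\dots,\alpha_{jN(j)},0,\dots,0)$, where $N(1)=0$ and the sequence $\{N(j)\}$ is nondecreasing with consecutive differences $N(j+1)-N(j)\in\{0,1\}$. The key observation is purely combinatorial: since $N$ starts at $0$, increases by at most $1$ at each of the $k-1$ steps from $x_1$ to $x_k$, we have $N(j)\le j-1$ for every $j$, and in particular $N(k)\le k-1$. Hence the largest coordinate index that can ever be nonzero across all of $Y$ is $\max_j N(j)\le k-1$. Every point of $Y$ therefore has all coordinates beyond the $(k-1)$-st equal to zero, i.e.\ its last $n-(k-1)=n-k+1$ coordinates vanish.

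This says exactly that $Y$ lies in the coordinate subspace $\{(w_1,\dots,w_{k-1},0,\dots,0)\}\cap\mathbb{B}^n$, which is the copy of $\mathbb{B}^{k-1}$ inside $\mathbb{B}^n$ described in the statement. Since the proposition gives $Y\sim X$, we conclude $X\sim Y$ with $Y$ contained in that $\mathbb{B}^{k-1}$, as claimed. (One minor bookkeeping point is that $|X|=k$ should be read as $X$ having $k$ distinct points so that the ordered indexing runs $1,\dots,k$; the argument only uses that there are $k$ indices.)

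I do not anticipate a serious obstacle here: the corollary is essentially a counting remark about the normal form, and the substantive work—the construction and uniqueness of $\Psi_X$—has already been done in the proposition. The only thing requiring any care is the inequality $N(k)\le k-1$, which follows immediately from $N(1)=0$ together with the step bound $N(j+1)-N(j)\le 1$; this is precisely the content of the parenthetical remark after the proposition that ``we possibly did not use all of the dimensions,'' now made quantitative.
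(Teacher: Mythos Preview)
Your argument is correct and follows exactly the line the paper intends: the corollary is stated immediately after the remark that ``in the proof we possibly did not use all of the dimensions of $\mathbb{B}^{n}$,'' and your counting argument $N(j)\le j-1$ is precisely the quantitative version of that observation.
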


\begin{corollary}
\label{small space}If $H\sim DA_{n}(X)$ for some $X$ and $\dim(H)=k$ then
$H\sim DA_{k-1}(Y)$ for some $Y.$
\end{corollary}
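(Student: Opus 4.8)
The plan is to assemble this from the preceding Corollary together with the fact, recorded at the start of Section \ref{normal}, that automorphisms of the ball induce rescalings of the associated $DA_{n}$ spaces. First I would fix the dimension count. Since $H\sim DA_{n}(X)$, the rescaling supplies a bijection $\Xi:X(H)\rightarrow X$, so $\left\vert X\right\vert =\left\vert X(H)\right\vert$; and because the reproducing kernels form a basis of $H$, we have $\left\vert X(H)\right\vert =\dim H=k$. Hence $\left\vert X\right\vert =k$ and the preceding Corollary applies: after moving $X$ by a suitable $\Lambda\in\operatorname*{Aut}\left(  \mathbb{B}^{n}\right)$ we obtain a congruent set $Y=\Lambda(X)\sim X$ lying in the slice of $\mathbb{B}^{n}$ consisting of points whose last $n-k+1$ coordinates vanish, a copy of $\mathbb{B}^{k-1}$.

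Next I would convert this congruence into a rescaling equivalence. Comparing (\ref{rescaling 2}) with (\ref{Ru3}) --- exactly the observation made just before the normal-form Proposition --- the automorphism $\Lambda$ yields $DA_{n}(X)\sim DA_{n}(Y)$. The remaining point is to recognize $DA_{n}(Y)$ as a $DA_{k-1}$ space. Write each point of $Y$ as $(y^{\prime},0,\ldots,0)$ with $y^{\prime}\in\mathbb{B}^{k-1}\subset\mathbb{C}^{k-1}$, and let $Y^{\prime}$ be the resulting subset of $\mathbb{B}^{k-1}$. For two such points $z=(z^{\prime},0,\ldots,0)$ and $w=(w^{\prime},0,\ldots,0)$ the vanishing of the final coordinates gives $\left(  w,z\right)  =\left(  w^{\prime},z^{\prime}\right)$, so the $DA_{n}$ kernel value $\left(  1-\left(  w,z\right)  \right)  ^{-1}$ equals the $DA_{k-1}$ kernel value $\left(  1-\left(  w^{\prime},z^{\prime}\right)  \right)  ^{-1}$. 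Thus $DA_{n}(Y)$ and $DA_{k-1}(Y^{\prime})$ have identical Gram matrices under the natural identification of their index sets, and therefore $DA_{n}(Y)\sim DA_{k-1}(Y^{\prime})$ (trivially, with rescaling function $\gamma\equiv1$).

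Chaining these equivalences through the transitivity of $\sim$ gives $H\sim DA_{n}(X)\sim DA_{n}(Y)\sim DA_{k-1}(Y^{\prime})$, which is the asserted conclusion with $Y^{\prime}$ in the role of $Y$. There is no substantial obstacle here: the geometric content is entirely carried by the preceding Proposition and Corollary, and the only thing to verify directly is the kernel-restriction identity of the previous paragraph, which is immediate from the formula for the $DA_{n}$ kernel. The one point worth stating carefully is the dimension count $\left\vert X\right\vert =k$, since it is what licenses the use of the preceding Corollary.
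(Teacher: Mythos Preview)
Your argument is correct and is exactly the route the paper intends: the corollary is stated without proof as an immediate consequence of the preceding corollary together with the observation (made just before the normal-form Proposition) that automorphisms of $\mathbb{B}^{n}$ induce rescalings of the associated $DA_{n}$ spaces. Your write-up simply makes explicit the two small steps the paper leaves to the reader, the dimension count $\left\vert X\right\vert =k$ and the identification $DA_{n}(Y)\sim DA_{k-1}(Y^{\prime})$ via equality of kernel values on the slice.
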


\begin{theorem}
\label{reduction}Suppose $X=\left\{  x_{i}\right\}  $ and $Y=\left\{
y_{i}\right\}  $ are ordered finite sets in $\mathbb{CH}^{n},$ The following
are equivalent:

\begin{enumerate}
\item $X$ is congruent to $Y:X\sim Y.$

\item $X$ and $Y$ have the same normal forms: $\Psi_{X}X=\Psi_{Y}Y.$

\item The spaces $DA_{n}(X)$ and $DA_{n}(Y)$ are rescalings of each other:
$DA_{n}(X)\sim DA_{n}(Y).$

\item The Gram matrices of the associated spaces are equivalent:
$G(DA_{n}(X))\sim G(DA_{n}(Y)).$

\item The triangles of $X$ are congruent to the triangles of $Y:$ For any
triple $i,$ $j.k$ there is a $\Gamma_{ijk}\in\operatorname*{Aut}\left(
\mathbb{B}^{n}\right)  $ taking $\{x_{i},$ $x_{j},$ $x_{k}\}$ to $\{y_{i},$
$y_{j},$ $y_{k}\}$.
\end{enumerate}
\end{theorem}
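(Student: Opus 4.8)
The plan is to run the cycle $(1)\Rightarrow(3)\Rightarrow(4)\Rightarrow(2)\Rightarrow(1)$ and to attach $(5)$ by proving $(1)\Rightarrow(5)\Rightarrow(4)$. Most of the links are bookkeeping; the single substantive link is the recovery of a congruence from Gram-matrix data.

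The easy links first. The implication $(1)\Rightarrow(3)$ is immediate from the fact, recorded just before the normal-form proposition, that automorphisms induce rescalings: if $X=\Lambda Y$ with $\Lambda\in\operatorname{Aut}(\mathbb{B}^n)$ then $DA_n(X)=DA_n(\Lambda Y)\sim DA_n(Y)$. The equivalence $(3)\Leftrightarrow(4)$ is merely the translation of the defining relation (\ref{rescaling 2}) of a rescaling into the diagonal-conjugation identity (\ref{modified gram}) defining $G\sim G'$; since the sets are ordered and matched by $x_i\leftrightarrow y_i$, the bijection $\Xi$ is the identity and no permutation intervenes. Finally $(2)\Rightarrow(1)$ is trivial, as $\Psi_X X=\Psi_Y Y$ gives $X=\Psi_X^{-1}\Psi_Y\,Y$ with $\Psi_X^{-1}\Psi_Y\in\operatorname{Aut}(\mathbb{B}^n)$.

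The key link is $(4)\Rightarrow(2)$. From $(4)$, hence $(3)$, together with the fact that congruences induce rescalings, the two normal forms $N_X=\Psi_X X$ and $N_Y=\Psi_Y Y$ satisfy $DA_n(N_X)\sim DA_n(N_Y)$. Because a set in normal form has its first vertex at the origin, where the $DA_n$ kernel $k_0\equiv 1$, the Gram matrix of $DA_n(N_X)$ already carries $1$'s in its first row and column; that is, it coincides with its own basepoint rescaling relative to the first point. By the characterization in Section \ref{invariance} — two spaces are rescalings of each other iff their basepoint rescalings have equal Gram matrices — the relation $DA_n(N_X)\sim DA_n(N_Y)$ forces $G(DA_n(N_X))=G(DA_n(N_Y))$. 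Equal $DA_n$ Gram matrices mean equal Hermitian inner products $(x_j,x_i)=1-1/k_{ij}$ among the vertices of $N_X$ and of $N_Y$; two configurations in $\mathbb{C}^n$ with a common inner-product Gram matrix differ by a unitary, and unitaries lie in $\operatorname{Aut}(\mathbb{B}^n)$ and fix $0$, so $N_X\sim N_Y$. As both are in $\mathcal{N}$, the uniqueness clause of the normal-form proposition yields $N_X=N_Y$, which is $(2)$.

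It remains to incorporate $(5)$. The implication $(1)\Rightarrow(5)$ is trivial: a single $\Lambda$ (or its inverse) serves as every $\Gamma_{ijk}$. For $(5)\Rightarrow(4)$ I would use that the distances $\delta_{ij}$ and the angular invariants $A_{ijk}$ are automorphism invariants (Section \ref{numerical}); an order-respecting congruence of $\{x_i,x_j,x_k\}$ onto $\{y_i,y_j,y_k\}$ hence forces $\delta_{ij}^X=\delta_{ij}^Y$ and $A_{ijk}^X=A_{ijk}^Y$. Ranging over all triples gives equality of the full parameter set $J(X)=J(Y)$ of (\ref{G(X)}), and the mechanical passage of Section \ref{describing} between $J$ and the basepoint-normalized Gram matrix then delivers $(4)$; note that only the easy direction (congruence $\Rightarrow$ equal invariants) is used, so Theorem \ref{n=3} is not needed here. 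The place where care is genuinely required is the reverse passage inside $(4)\Rightarrow(2)$: one must verify that, after canonically removing the entire $(2n-1)$-parameter rescaling freedom by basepoint rescaling, the surviving data are exactly the inner products $(x_i,x_j)$, that these form a complete unitary-congruence invariant, and that the positivity normalizations built into $\mathcal{N}$ eliminate the residual unitary — in particular the torus of phases — so that equality of Gram matrices really forces identical normal forms. This phase-and-positivity bookkeeping, not any single delicate estimate, is the main obstacle.
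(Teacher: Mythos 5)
Your proposal is correct and follows essentially the same route as the paper: reduce to normal form, observe that normal-form spaces are basepoint normalized so equivalence of Gram matrices becomes equality, recover the configuration up to a unitary from the inner products $\left(  x_{j},x_{i}\right)  =1-1/k_{ij}$, and kill the unitary via the normal-form/positivity conventions; your $(5)\Rightarrow(4)$ via the invariants $\delta_{ij}$ and $A_{1rs}$ of the triples through $x_{1}$ is likewise the paper's argument. The only differences are cosmetic (you close the cycle through $(2)$ and invoke the uniqueness clause of the normal-form proposition where the paper argues directly that the Gram matrix determines a set in $\mathcal{N}$).
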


With this result as background, the discussion going forward is in the spirit
of Klein's \textit{Erlangen Program. }The geometry of $X\subset\mathbb{CH}%
^{n}$\textit{\ }is described by numerical data that is invariant under the
automorphism group of $\mathbb{CH}^{n}$. The structure of a RKHS $H $ is
described by numerical data invariant under the rescaling group. Much of the
work here focuses of identifying useful invariants and establishing a
dictionary between analytic and geometric invariants.

\begin{proof}
If \textbf{(1) }holds, and thus $Y=\Lambda X$ for $\Lambda\in
\operatorname*{Aut}\left(  \mathbb{B}^{n}\right)  ,$ then $\Psi_{Y}Y=\Psi
_{Y}\Lambda X$ is in $\mathcal{N}$ and is congruent to $X.$ Hence by the
uniqueness statement in the previous proposition, $\Psi_{Y}\Lambda X=\Psi
_{X}X.$ Hence \textbf{(1)} implies \textbf{(2)}. If \textbf{(2)} holds then
$DA_{n}(\Psi_{X}X)=DA_{n}(\Psi_{Y}Y),$ Also, as we noted earlier, $X\sim
\Psi_{X}X$ implies that $DA_{n}(X)\sim DA_{n}(\Psi_{X}X),$ with a similar
statement for $Y.$ Combining those equivalences we see that \textbf{(3)
}holds. If \textbf{(3) }holds then, by formula (\ref{modified gram}%
)\textbf{\ }and the discussion surrounding it, \textbf{(4)} holds.

We now show that \textbf{(4) }implies \textbf{(1). } We know from the previous
paragraph that $G(DA_{n}(X))\sim G(DA_{n}(\Psi_{X}X))$ and similarly for $Y.$
Hence we can replace \textbf{(4)} with $G(DA_{n}(\Psi_{X}X))\sim G(DA_{n}%
(\Psi_{Y}Y)).$ Consider now $G(DA_{n}(\Psi_{X}X)).$ The set $\Psi_{X}X$ has
its point $\Psi x_{1}$ at the origin and hence $DA_{n}(\Psi_{X}X)$ is
basepoint normalized with $\Psi x_{1}$ as the basepoint, similarly with $Y.$
We noted earlier that Gram matrices of basepoint normalized spaces are
equivalent if and only if they are equal. Hence we are reduced to the case of
equal Gram matrices.. Thus we will be finished if we can show that if
$Z\in\mathcal{N}$ then $G=G(DA_{n}(Z))$ determines $Z.$ The matrix $G$ has
entries $k_{ij}=(1-\left\langle z_{j},z_{i}\right\rangle )^{-1}$ and hence
knowing $G$ insures that we know the matrix $\left(  \left\langle z_{j}%
,z_{i}\right\rangle \right)  _{i,j=2}^{m}.$ This matrix is the Gram matrix of
a set of $m-1$ points in $\mathbb{C}^{n}$ and hence determines that set of
points up to unitary equivalence. In the case of interest to us, $Z=\Psi
_{X}X,$ the set is assumed to be in normal form and that removes the ambiguity
associated with the unitary equivalence.

Certainly \textbf{(1) }implies\textbf{\ (5). }To finish we show that
\textbf{(5) }implies\textbf{\ (4). }To do this it suffices to show that if $X$
and $Y $ are both in normal form then they have the same Gram matrix. The
first row and first column of those matrices agree by construction. Select
$i,j>1,$ $i\neq j$ and consider the triple $X_{ij}=\left\{  x_{1}%
=0,x_{i},x_{j}\right\}  $ and similarly for $Y_{ij}.$ By assumption the two
are congruent. Hence the invariant data set $\delta=\delta(X_{ij})$ defined as
in Section \ref{describing} is equal to the corresponding set $\delta
(Y_{ij}).$ As noted there, this implies the corresponding data sets
$\kappa(X_{ij})$ and $\kappa(Y_{ij})$ also agree, Hence, also, the associated
three by three Gram matrices agree. Further, the elements in those small
matrices are determined by the position of the points, independently of any
containing superset. Hence the corresponding entries on the Gram matrices for
$X$ and $Y$ agree.
\end{proof}

The previous result is specific to finite dimensional spaces. If $X$ is
infinite then $DA_{n}(X)$ gives more complicated \ information about $X.$ For
instance, if $X\subset\mathbb{CH}^{1}=\mathbb{D}$ satisfies $DA_{1}%
(X)=DA_{1}=H^{2}$ then we can only conclude that $X$ contains a sequence which
fails the Blaschke condition. More information about the general, infinite
dimensional, situation is in \cite{Sh}.

In \cite{HS}, \cite{BE}, and \cite{G} the authors study congruence classes of
finite point sets in $\mathbb{CH}^{n}\ $and obtain results that are similar to
the equivalence of conditions \textbf{(1),\ (4)}, and \textbf{(5)} in the
previous theorem. Their proofs follow the same general line as the previous
proof; they move from the point set to an associated matrix, develop an
appropriate notion of normal form for the matrix, and show that equality of
the normal forms is equivalent to the congruence of the sets. However the
details of their analysis differ. They view $\mathbb{CH}^{n}$ as the negative
points of $\mathbb{CP}^{n}$ and study the matrix $([x_{i},x_{j}])$ using tools
from projective geometry. We view $\mathbb{CH}^{n}$ as the ball in
$\mathbb{C}^{n}$ and use Euclidean coordinate geometry to study the matrix
$\left(  k_{ij}\right)  =(1/[x_{i},x_{j}]).$

Using this theorem we see two sets of data which can be used to describe $X$
up to congruence. The first, $E(X)$, is the set of $\left(  n-1\right)  ^{2}$
real numbers which specify the Euclidean coordinates of the points of $X$ in
normal form. This is an inductive description of the set, adding points to the
set one at a time and describing each new point by its relation to the
previous points. It is similar in spirit to an inductive description which was
suggested by Hakin and Sandler in \cite{HS}. \ A second set of data which
describes $X$ is $J(X)$ introduced in (\ref{G(X)}). Taking into account the
cocycle identity for angular invariants $J(X)$ is described by $\left(
n-1\right)  ^{2}$ real numbers. That data is rescaling invariant and
determines the Gram matrix of a rescaled version of $H$. Those numbers are
also invariant under automorphisms and hence should be viewed as geometric
descriptors of $X.$ In particular, considering the previous theorem and the
discussion in Section \ref{describing}, we see that this data determines the
congruence class of triangles with vertices in $X$ and that data determines
$X.$

The Euclidean parameters $E(X)$ do not clearly capture the hyperbolic geometry
of $X,$ but they do allow a very simple description of which parameter sets
are attainable. In contrast, the set $J(X),$ which contains explicit
information about the hyperbolic geometry, does not give a clear vision of the
allowable parameter set. The description for three point sets is given in
(\ref{B}) of Theorem \ref{n=3}, but the situation for $n>3$ is unclear.

\subsection{The Conjugate Space, $\overline{H}$\label{conjugations}}

A RKHS, $H,$ consists of a vector space, a Hermitian inner product$,$ and a
distinguished basis, called reproducing kernels. In this section and the next
we describe two ways of constructing a new RKHS from $H$; one by modifying the
inner product, the other by changing to a new set of reproducing kernels. If
$K$ is the Gram matrix for $H$ then the new spaces will have Gram matrices
$\overline{K}$ and $K^{-1}$ respectively$.$ We then discuss the particularly
interesting case when the two constructions give identical spaces. That
happens when the matrix $K$ is orthogonal, $\overline{K}=K^{t}=K^{-1}.$

Given $H,$ we define $\overline{H},$ the \textit{conjugate space} of $H,$ to
be the RKHS formed using the same vector space, the same set of vectors as
reproducing kernels, but a different Hermitian inner product, $\left[
\cdot,\cdot\right]  ,$ defined by
\begin{equation}
\left[  k_{i},k_{j}\right]  =\overline{\left\langle k_{i},k_{j}\right\rangle
}. \label{bracket}%
\end{equation}
It is immediate that $G\left(  \overline{H}\right)  =\overline{G(H)}%
=G(H)^{t}.$ It is also immediate that the \underline{conjugate}
\underline{linear} map $\Lambda$ defined by
\begin{equation}
\Lambda\left(  \sum\alpha_{i}k_{i}\right)  =\sum\bar{\alpha}_{i}k_{i}.
\label{lambda}%
\end{equation}
is an isometry from $H$ to $\overline{H};$ that is
\[
\left\Vert \sum\alpha_{i}k_{i}\right\Vert _{H}^{2}=\sum\alpha_{i}\bar{a}%
_{j}\left\langle k_{i}.k_{j}\right\rangle =\sum\bar{\alpha}_{i}a_{j}\left[
k_{i},k_{j}\right]  =\left\Vert \sum\bar{\alpha}_{i}k_{i}\right\Vert
_{\overline{\overline{H}}}^{2}.
\]
If $H$ has the CPP and thus satisfies $H\sim DA_{n}(X)$ for some
$X\subset\mathbb{C}^{n}$ then $\overline{H}$ also has the CPP and satisfies
$\overline{H}\sim DA_{n}(\overline{X}).$ Here $\overline{X}$ is the set of
points obtained by expressing the points of $X$ in terms of coordinates with
respect to standard basis and then conjugating those coordinates. In fact, if
we knew from the start that $H$ had the CPP and thus $H\sim DA_{n}(X)$ then we
could have based the construction of $\overline{H}$ on the conjugate linear
isometry of $\mathbb{CH}^{n}$ given by conjugating the coordinates.

If $X=\overline{X}$ then $H\sim\overline{H}.$ This holds, for instance, for
the Hilbert spaces of functions on trees which we discuss in Section
\ref{tree spaces} and for the RKHS obtained as subspaces of the diameter
spaces of \cite{ARS07}.

\subsection{The Dualized Space, $H^{\#}$}

A RKHS $H$ is a Hilbert space together with the distinguished basis
$B=B(H)=\left\{  k_{i}\right\}  .$ Associated with $B$ is the \textit{dual
basis} $B^{\#}=\left\{  f_{j}\right\}  $ defined by the requirement that
$\left\langle k_{i},f_{j}\right\rangle =\delta_{ij}.$ We define the
\textit{dualized space} $H^{\#}$ to be the RKHS obtained by using the same
Hilbert space, $H,$ but selecting $B^{\#}$ as the distinguished basis rather
than $B. $

Let $K$ be the Gram matrix of $H$ and $K^{\#}$ the Gram matrix of $H^{\#},$
$K^{\#}=\left(  \left\langle f_{i},f_{j}\right\rangle \right)  =\left(
f_{ij}\right)  .$ Let $\Theta=\left(  \theta_{ij}\right)  $ be the matrix
which takes $B$ to $B^{\#};$ for all $i$
\begin{equation}
f_{i}=\sum\nolimits_{j}\theta_{ij}k_{j} \label{f equals}%
\end{equation}
The transformation in the other direction is then given by $\Theta
^{-1}=\left(  \gamma_{ij}\right)  ,$ $k_{i}=\sum_{j}\gamma_{ij}f_{j}.$

\begin{proposition}%
\begin{align}
\Theta K  &  =I\\
K^{\#}  &  =\Theta K\Theta^{\ast}\nonumber
\end{align}
Hence the matrices $K,\ K^{\#}$ and $\Theta$ are self adjoint and
\begin{equation}
\Theta=K^{\#}=K^{-1}. \label{later}%
\end{equation}

\end{proposition}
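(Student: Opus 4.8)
The plan is to unwind the two defining relations---the dual-basis condition $\langle k_i,f_j\rangle=\delta_{ij}$ and the change-of-basis formula (\ref{f equals})---and to read off the three matrix identities by comparing coefficients. Everything reduces to careful bookkeeping with the inner product, which (consistent with the convention $h(x)=\langle h,k_x\rangle$ used throughout) I take to be linear in its first slot and conjugate-linear in its second.

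First I would establish $\Theta K=I$. Since $\delta_{ij}$ is real, the dual-basis condition may be rewritten as $\langle f_i,k_j\rangle=\delta_{ij}$. Substituting $f_i=\sum_l\theta_{il}k_l$ and using linearity in the first slot gives
\[
\langle f_i,k_j\rangle=\sum_l\theta_{il}\langle k_l,k_j\rangle=\sum_l\theta_{il}k_{lj}=(\Theta K)_{ij},
\]
so $(\Theta K)_{ij}=\delta_{ij}$, that is, $\Theta K=I$. Because $K$ is the Gram matrix of a basis it is positive definite, hence invertible, and this already forces $\Theta=K^{-1}$.

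Next I would compute the Gram matrix of $H^{\#}$. Expanding both arguments of $K^{\#}_{ij}=\langle f_i,f_j\rangle$ and using bilinearity (with the conjugate falling on the second factor) yields
\[
K^{\#}_{ij}=\sum_{l,m}\theta_{il}\,\overline{\theta_{jm}}\,\langle k_l,k_m\rangle=\sum_{l,m}\theta_{il}\,k_{lm}\,\overline{\theta_{jm}}=(\Theta K\Theta^{\ast})_{ij},
\]
which is the second displayed identity.

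Finally I would assemble the self-adjointness statements and the concluding equalities. Both $K$ and $K^{\#}$ are Gram matrices, so $k_{ji}=\overline{k_{ij}}$ and similarly for $K^{\#}$; hence both are self-adjoint. From $\Theta=K^{-1}$ and $K=K^{\ast}$ it follows that $\Theta$ is self-adjoint as well. Substituting $\Theta=\Theta^{\ast}=K^{-1}$ into $K^{\#}=\Theta K\Theta^{\ast}$ collapses it to $K^{\#}=K^{-1}KK^{-1}=K^{-1}$, which gives the chain $\Theta=K^{\#}=K^{-1}$ of (\ref{later}). No step presents a genuine obstacle; the only point demanding attention is the placement of the complex conjugates, so that the first relation emerges as $\Theta K=I$ rather than as an identity involving $\Theta^{\ast}$.
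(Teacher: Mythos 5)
Your proposal is correct and follows essentially the same route as the paper: expand $f_i=\sum_l\theta_{il}k_l$ in $\langle f_i,k_j\rangle$ and in $\langle f_i,f_j\rangle$ to read off $\Theta K=I$ and $K^{\#}=\Theta K\Theta^{\ast}$, then use invertibility and self-adjointness of the Gram matrix to collapse everything to $\Theta=K^{\#}=K^{-1}$. The paper's proof records only the two coefficient computations and leaves the concluding chain implicit, so your additional bookkeeping is a faithful filling-in rather than a departure.
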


\begin{proof}
The calculation
\[
\delta_{ij}=\left\langle f_{i},k_{j}\right\rangle =\left\langle \sum
\nolimits_{s}\theta_{is}k_{s},k_{j}\right\rangle =\sum\nolimits_{s}\theta
_{is}k_{sj},
\]
gives the first equation. The second follows from
\[
\left\langle f_{i},f_{j}\right\rangle =\left\langle \sum\nolimits_{s}%
\theta_{is}k_{s}.\sum\nolimits_{t}\theta_{jt}k_{t}\right\rangle =\sum
\nolimits_{s,t}\theta_{is}k_{st}\overline{\theta_{jt}}.
\]

\end{proof}

\subsection{Orthogonal Spaces and Pick Spaces\label{orthogonal}}

Associated with the construction of $H^{\#}$ is a mapping $\Omega,$ the
\underline{conjugate} \underline{linear} map from $H$ to itself that takes the
reproducing kernel basis $\left\{  k_{i}\right\}  \ $to to the dual basis
$\left\{  f_{i}\right\}  :$
\begin{equation}
\Omega(\sum\alpha_{i}k_{i})=\sum\overline{\alpha_{i}}\Omega(k_{i}%
)=\sum\overline{\alpha_{i}}f_{i}. \label{Omega}%
\end{equation}
Using this operator and $\Lambda$ defined by (\ref{lambda}) we define the
operator $S=\Omega\Lambda.$ Thus $S:\overline{H}\rightarrow H^{\#},$
\[
S(\sum\alpha_{i}k_{i})=\sum\alpha_{i}f_{i}\text{\ }%
\]

A conjugate linear map $\Gamma$ from a Hilbert space to itself which is an
involution, i.e. $\Gamma^{2}=1,$ and an isometry, i.e.$\forall h$ $\left\Vert
\Gamma h\right\Vert =\left\Vert h\right\Vert ;$ is called a
\textit{conjugation. }We will be particularly interested in cases where the
operator $\Omega$ we just defined is a conjugation. Because $\Omega$ has the
additional structural property of taking the kernel basis to the dual basis
the conditions for $\Omega$ to be a conjugation simplify.

\begin{theorem}
\label{equivalent}Suppose $\Omega$ is defined by (\ref{Omega}). The following
are equivalent:

\begin{enumerate}
\item The matrix $K$ is orthogonal: $K^{t}K=\bar{K}K=I,$

\item Let $\sigma=\sum_{i=1}^{n}k_{i}$. The matrix $\left(  k_{i}k_{j}%
,\sigma\right)  $ is the identity.

\item $\Omega$ is an isometry of $H$: $K^{t}=\Theta K\Theta^{\ast}.$

\item $\Omega$ is an involution of $H$: $\Theta^{t}\Theta=\bar{\Theta}%
\Theta=I.$

\item $\Omega$ is a conjugation of $H.$

\item $S$ is an an isometry between $\overline{H}$ and$\ H^{\#}$.
\end{enumerate}
\end{theorem}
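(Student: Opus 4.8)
The plan is to show that all six conditions are equivalent to the single algebraic statement in (1), namely that $K$ is orthogonal, $\bar{K}K = I$ (equivalently $K^{-1} = \bar{K}$). The essential input is the preceding proposition, which gives $\Theta = K^\# = K^{-1}$ and tells us that $K$, $K^\#$, and $\Theta$ are all self-adjoint; in particular $\bar{K} = K^t$ and $\bar{\Theta} = \Theta^t$. Throughout I would keep careful track of the fact that $\Omega$ and $\Lambda$ are conjugate linear, so that a conjugate linear isometry reverses the inner product; this is the single point where sign- and index-bookkeeping must be done with care.

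First I would treat (3), (4), and (6) by direct computation. For (3), compute $\|\Omega(\sum\alpha_i k_i)\|^2 = \|\sum\overline{\alpha_i}f_i\|^2 = \sum_{i,j}\overline{\alpha_i}\alpha_j f_{ij}$ and compare with $\|\sum\alpha_i k_i\|^2 = \sum_{i,j}\alpha_i\overline{\alpha_j}k_{ij}$; after relabelling $i\leftrightarrow j$ in the second sum these agree for all coefficient vectors exactly when $f_{ij} = k_{ji}$ for all $i,j$, that is $K^\# = K^t$. Since $K^\# = K^{-1}$ and $K^t = \bar{K}$, this is $K^{-1} = \bar{K}$, which is (1). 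For (4), compute $\Omega^2 k_i = \Omega f_i = \sum_j \overline{\theta_{ij}}f_j = \sum_l (\bar{\Theta}\Theta)_{il}k_l$, so $\Omega^2 = I$ precisely when $\bar{\Theta}\Theta = I$; because $\Theta = K^{-1}$ and inversion preserves orthogonality, this is again (1). For (6), note that $S(\sum\alpha_i k_i) = \sum\alpha_i f_i$ is linear and compute $\|S(\sum\alpha_i k_i)\|_{H^\#}^2 = \sum_{i,j}\alpha_i\overline{\alpha_j}f_{ij}$ against $\|\sum\alpha_i k_i\|_{\overline{H}}^2 = \sum_{i,j}\alpha_i\overline{\alpha_j}[k_i,k_j] = \sum_{i,j}\alpha_i\overline{\alpha_j}k_{ji}$, where the last step uses $[k_i,k_j] = \overline{k_{ij}} = k_{ji}$; equality for all coefficients is once more $K^\# = K^t$, hence (1).

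With (3), (4), and (6) each shown equivalent to (1), condition (5) is immediate: a conjugation is by definition an isometric involution, so (5) is the conjunction of (3) and (4), and each already holds iff (1) does. This is the promised simplification, namely that for this particular $\Omega$ the isometry and involution conditions coincide. Finally, for (2) I would read $(g,\sigma)$ as $\sum_m g(x_m) = \langle g,\sigma\rangle$, applied to the pointwise product $g = k_i k_j$; then $(k_ik_j,\sigma) = \sum_m k_i(x_m)k_j(x_m) = \sum_m k_{mi}k_{mj} = \sum_m \overline{k_{im}}k_{mj} = (\bar{K}K)_{ij}$, so the matrix $(k_ik_j,\sigma)$ is the identity exactly when $\bar{K}K = I$, which is (1).

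The computations are routine; the main obstacle is purely conceptual rather than technical. It lies in two places. First, the conjugate linearity of $\Omega$ and $\Lambda$ forces the isometry conditions to compare $f_{ij}$ with $k_{ji}$ rather than $k_{ij}$, and the involution condition to produce $\bar{\Theta}\Theta$ rather than $\Theta^\ast\Theta$; getting these conjugations and index orders right is where an error would slip in. Second, condition (2) requires the interpretive step of reading $(k_ik_j,\sigma)$ as the sum over $X$ of the pointwise product evaluated against $\sigma$, and noting that this quantity is well defined even though $k_ik_j$ itself may fail to lie in $H$.
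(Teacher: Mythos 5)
Your proof is correct and follows essentially the same route as the paper's: both reduce (3), (4), and (6) to the single identity $\langle f_i,f_j\rangle=\overline{\langle k_i,k_j\rangle}$ using $\Theta=K^{\#}=K^{-1}$ and the self-adjointness of $K$ and $\Theta$ from the preceding proposition, treat (5) as the conjunction of (3) and (4), and read (2) as a restatement of (1). (One harmless index slip: with the paper's convention $k_i(x_m)=k_{im}$, your matrix in (2) comes out as $K\bar{K}$ rather than $\bar{K}K$, but each equals the identity exactly when the other does, so the equivalence with (1) is unaffected.)
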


\begin{proof}
The second statement is a rewriting of the first. The definition of $\Omega$
together with the previous proposition shows that the equations in statements
one, three, and four are equivalent, we must show why the verbal statements
correspond to the equations. The matrix $K$ is selfadjoint and hence the
equations in the first statement follow from the definition. For the third,
suppose $\Omega$ is an isometry. In that case we must have $\left\Vert
\sum\nolimits_{i}a_{i}k_{i}\right\Vert =\left\Vert \sum\nolimits_{i}\bar
{a}_{i}f_{i}\right\Vert .$ Squaring and expanding gives
\[
\sum\nolimits_{i}a_{i}\bar{a}_{j}\left\langle k_{i}.k_{j}\right\rangle
=\sum\nolimits_{i}\bar{a}_{i}a_{j}\left\langle f_{i}.f_{j}\right\rangle .
\]
The right hand side is real and hence we can replace it with its complex
conjugate. This produces an equality which will hold for all $\left\{
a_{i}\right\}  $ if and only if $\forall i,j$
\begin{equation}
\overline{\left\langle k_{i},k_{j}\right\rangle }=\left\langle f_{i}%
,f_{j}\right\rangle . \label{conj}%
\end{equation}
We now use (\ref{f equals}) in that equality to obtain $\bar{K}=\Theta
K\bar{\Theta}^{t},$ which is equivalent to the equation in the third
statement. Similarly straightforward calculations show that the fourth
statement, requiring that $\Omega\Omega h=h$ for a general $h\in H,$ is
equivalent to the equation $\Theta^{t}\Theta=I.$ The fifth statement is, by
definition, the union of two before it.

Given the definition of $S$ the final statement is equivalent to the equality
of inner products%
\[
\left[  k_{i}.k_{j}\right]  =\left\langle f_{i},f_{j}\right\rangle .
\]
Given the definition (\ref{bracket}) this is equivalent to (\ref{conj}) and
hence to the fourth statement.
\end{proof}

We will say that a RKHS $H$ is \textit{orthogonal} if any, and hence all, of
the conditions in the previous theorem hold. Thus the orthogonal $H$ are those
which have a conjugation operator taking reproducing kernel basis to the dual
basis. Using (\ref{conj}) we see that the orthogonal $H$ are also those for
which the linear map $S$ of $\overline{H}$ to $H^{\#}$ given by $S(\sum
\alpha_{i}k_{i})=\sum\alpha_{i}f_{i}$ is an isometry of RKHS. We say that a
RKHS is \textit{r-orthogonal} if it is a rescaling of an orthogonal $H.$

\begin{corollary}
Given a RKHS $H,$ either all or none of the spaces $\left\{  H,H^{\#}%
,\overline{H}\right\}  $\ are r-orthogonal.
\end{corollary}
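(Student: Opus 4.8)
The corollary asserts that r-orthogonality is a property of the rescaling class of $H$: if any one of $H$, $H^{\#}$, $\overline{H}$ is r-orthogonal, then all three are. Recall that $H$ is r-orthogonal means it is a rescaling of some orthogonal space (one whose Gram matrix $K$ satisfies $\bar K K = I$), and recall from the preceding discussion that the three constructions have Gram matrices $G(H)=K$, $G(\overline H)=\overline K = K^t$, and $G(H^{\#})=K^{-1}$. The plan is to reduce everything to a statement about Gram matrices under the equivalence $\sim$ defined in (\ref{modified gram}), and then check that the orthogonality condition interacts symmetrically with the three operations $K\mapsto K$, $K\mapsto \overline K$, $K\mapsto K^{-1}$.

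\textbf{Main strategy.} First I would observe that ``$H$ is r-orthogonal'' is by definition equivalent to the matricial statement that $K=G(H)$ is rescaling-equivalent to some orthogonal matrix, i.e. $K\sim K_0$ with $\bar{K_0}K_0 = I$. So it suffices to prove the analogous statement purely for Gram matrices: if $K\sim K_0$ for an orthogonal $K_0$, then $\overline K$ and $K^{-1}$ are each rescaling-equivalent to orthogonal matrices as well. I would handle the two constructions separately. For the conjugate space, note that the rescaling $K\sim K_0$ via diagonal $\Gamma$ (so $K_0 = \Gamma K \bar\Gamma$) conjugates to give $\overline{K_0} = \bar\Gamma\, \overline K\, \Gamma$, which is again a rescaling relation (with the diagonal factor $\bar\Gamma$ in place of $\Gamma$); and $\overline{K_0}=K_0^t$ is orthogonal whenever $K_0$ is, since $(K_0^t)^t K_0^t = K_0 K_0^t = (\bar{K_0}K_0)^{\ \!}{}$ — here I would use that an orthogonal matrix that is also self-adjoint (as a Gram matrix is) satisfies both $\bar K_0 K_0 = I$ and $K_0^t K_0 = I$ simultaneously, so the transpose is again orthogonal. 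Hence $\overline H$ is r-orthogonal.

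\textbf{The dualized space.} For $H^{\#}$, whose Gram matrix is $K^{-1}$, I would argue that the inverse of an orthogonal matrix is orthogonal: if $\bar{K_0}K_0=I$ then $K_0^{-1}=\bar{K_0}{}^{t}$ up to the self-adjointness already noted, and one checks directly that $\overline{K_0^{-1}}\,K_0^{-1}=I$. The subtlety is that rescaling does not commute with inversion in the naive sense: if $K_0=\Gamma K\bar\Gamma$ then $K_0^{-1}=\bar\Gamma^{-1}K^{-1}\Gamma^{-1}$, and since $\Gamma$ is diagonal, $\bar\Gamma^{-1}$ and $\Gamma^{-1}$ are again diagonal, so this is once more a rescaling relation $K^{-1}\sim K_0^{-1}$. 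Thus $G(H^{\#})=K^{-1}$ is rescaling-equivalent to the orthogonal matrix $K_0^{-1}$, giving that $H^{\#}$ is r-orthogonal.

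\textbf{Closing the logic and the main obstacle.} The three implications above show that if $H$ is r-orthogonal then so are $\overline H$ and $H^{\#}$. To get the full ``all or none'' statement I would note that the three operations generate a small symmetry: $\overline{\overline H}\cong H$ and $(H^{\#})^{\#}\cong H$ (the bidual recovers the original via $\Theta=K^{-1}$ from (\ref{later})), and the conjugate and dual operations essentially commute at the level of Gram matrices since conjugation and inversion of a self-adjoint matrix commute. Therefore r-orthogonality of any one of the three forces it on the generator $H$, hence on all three. The step I expect to require the most care is verifying that for a self-adjoint matrix the single condition $\bar K_0 K_0 = I$ genuinely propagates to $K_0^t$ and $K_0^{-1}$ being orthogonal — i.e. disentangling the roles of transpose, conjugate, and adjoint for these specific Gram matrices; this is where one must use self-adjointness of $K_0$ (guaranteed because it is a Gram matrix, as noted in the proposition preceding Theorem \ref{equivalent}) to identify $K_0^t=\overline{K_0}$ and $K_0^{-1}=\overline{K_0}$, collapsing the several orthogonality conditions into one.
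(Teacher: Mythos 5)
Your proof is correct and is essentially the argument the paper intends (the corollary is stated without proof, but it follows exactly as you do it: the three Gram matrices are $K$, $K^{-1}$, and $\overline{K}=K^{t}$; orthogonality $\bar K_{0}K_{0}=I$ together with self-adjointness makes these three operations permute orthogonal positive matrices among themselves; and since the rescaling $K_{0}=\Gamma K\bar\Gamma$ conjugates to $\overline{K_{0}}=\bar\Gamma\,\overline{K}\,\Gamma$ and inverts to $K_{0}^{-1}=\bar\Gamma^{-1}K^{-1}\Gamma^{-1}$ with diagonal factors, the equivalence $\sim$ is preserved, while $\overline{\overline H}=H$ and $(H^{\#})^{\#}=H$ close the "all or none" loop). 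The one garbled display in your second paragraph ($(K_{0}^{t})^{t}K_{0}^{t}=\cdots$) should be cleaned up to the correct computation $\overline{(\overline{K_{0}})}\,\overline{K_{0}}=K_{0}\overline{K_{0}}=\overline{\overline{K_{0}}K_{0}}=I$, but this does not affect the validity of the argument.
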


We now show that every space of the form $DA_{1}(X),$ is an r-orthogonal RKHS.
In fact we do not know of any other examples. Later, in Theorem
\ref{three dimensional}, we will show that there are no other three
dimensional examples.

The spaces\ $DA_{1}(X)$ are exactly the generic finite dimensional model
spaces; that is, model spaces corresponding to finite Blaschke products with
only simple zeros. Model spaces are discussed systematically in \cite{GMR}.
Here we collect some facts about them and about conjugation operators acting
on them.

Recall that $DA_{1}$ is the classical Hardy space, $H^{2}.$ Given a finite
Blaschke product, $\Theta\in H^{2},$ the associated finite dimensional
\textit{model space }is the subspace $J_{\Theta}\subset H^{2}\ $which is the
orthogonal complement of $\Theta H^{2},$ $J_{\Theta}=H^{2}\ominus\Theta
H^{2}.$ If $\Theta$ has only simple zeros then $J_{\Theta}$ can be regarded as
a space of functions on $X=X_{\Theta}=\left\{  x_{i}\right\}  ,$ the zero set
for $\Theta.$ This space inherits from $H^{2}$ the structure of a RKHS, and
the reproducing kernel functions for $J_{\Theta}$ are the restrictions to $X$
of the Hardy space kernels. Thus $J_{\Theta}=$ $DA_{1}(X_{\Theta}).$ We will
call such a space, $DA_{1}(X)$ for a finite $X, $ a \textit{Pick space, }both
in recognition of the fact that the classical Pick interpolation theorem can
be cast as a theorem about the multiplier algebra of such a space, and in
parallel with the usage in \cite{CLW} where algebras isomorphic to multiplier
algebras of such a space are called \textit{Pick algebras}. We will call a
RKHS which is a rescaling of a Pick space an \textit{r-Pick space}.

\begin{theorem}
\label{is orthogonal}Any finite dimensional r-Pick space $H$ is r-orthogonal.
\end{theorem}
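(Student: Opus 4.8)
The plan is to reduce to a single Pick space and exploit the conjugation that every model space already carries. Since r-orthogonality is, by definition, invariance under rescaling of an orthogonal space, and rescaling is an equivalence relation, it suffices to prove that one representative Pick space $J = DA_1(X)$ is r-orthogonal; an r-Pick space is a rescaling of $J$, hence a rescaling of an orthogonal space, hence r-orthogonal. So fix $X=\{x_i\}_{i=1}^n\subset\mathbb{D}$, let $B$ be the finite Blaschke product with simple zeros exactly at $X$, and identify $J=DA_1(X)=H^2\ominus BH^2$, the model space, whose reproducing kernels at the $x_i$ are the restricted Hardy kernels $k_i(z)=(1-\overline{x_i}z)^{-1}$.

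The key object is the standard model-space conjugation $C$ on $J$, given on the unit circle by $(Cf)(\zeta)=B(\zeta)\,\overline{\zeta f(\zeta)}$; this is a conjugate-linear isometric involution of $J$ (see \cite{GMR}). The central computation is to apply $C$ to a kernel: using $|\zeta|=1$ so that $\overline{\zeta}=1/\zeta$, one finds $(Ck_j)(\zeta)=B(\zeta)\,\overline{\zeta}\,(1-x_j\overline{\zeta})^{-1}=B(\zeta)/(\zeta-x_j)$. The holomorphic extension of this is the Lagrange-type function $B(z)/(z-x_j)$, which vanishes at every $x_i$ with $i\neq j$ and takes the value $B'(x_j)$ at $x_j$. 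Hence $Ck_j=B'(x_j)f_j$, where $f_j$ is the dual basis element characterized by $f_j(x_i)=\langle f_j,k_i\rangle=\delta_{ij}$. Thus $C$ already carries each reproducing kernel to a nonzero scalar multiple of the corresponding dual basis vector, and the scalar $B'(x_j)$ is nonzero precisely because the zeros of $B$ are simple.

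It remains to absorb the scalars $B'(x_j)$ by rescaling. Rescaling with $\Xi$ the identity amounts to keeping the same Hilbert space $J$ but designating $\widetilde{k}_i=\gamma_i k_i$ as the new kernels, as in (\ref{rescaling 2}); a short bookkeeping argument shows the new dual basis is $\widetilde{f}_j=\overline{\gamma_j}^{-1}f_j$. The operator $\widetilde{\Omega}$ of (\ref{Omega}) for the rescaled space is the conjugate-linear extension of $\widetilde{k}_i\mapsto\widetilde{f}_i$, while $C\widetilde{k}_i=\overline{\gamma_i}\,B'(x_i)f_i$. These agree on the basis exactly when $\overline{\gamma_i}^{\,2}B'(x_i)=1$, which we can solve for each $i$ by choosing $\gamma_i=\overline{B'(x_i)}^{-1/2}$ (any branch). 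With this choice $\widetilde{\Omega}$ and $C$ are two conjugate-linear maps agreeing on a basis, so $\widetilde{\Omega}=C$ is a conjugation; by Theorem \ref{equivalent} the rescaled space is orthogonal, and therefore $J$ is r-orthogonal.

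The main obstacle is the kernel computation $Ck_j=B'(x_j)f_j$, i.e. recognizing that the intrinsic model-space conjugation maps kernels to scalar multiples of the dual basis; once that identity is in hand, the rescaling argument and the appeal to Theorem \ref{equivalent} are routine. A secondary point worth stating carefully is that diagonal rescaling leaves the underlying Hilbert space, and hence the conjugation $C$, unchanged, so that comparing $\widetilde{\Omega}$ with $C$ on a common space is legitimate.
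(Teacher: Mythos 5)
Your proposal is correct and follows essentially the same route as the paper: both hinge on the model-space conjugation sending $k_j$ to $B'(x_j)f_j$ and then absorbing the nonzero scalars by the rescaling $\gamma_i=\overline{B'(x_i)}^{-1/2}$ before invoking Theorem \ref{equivalent}. The only difference is expository — you derive the identity $Ck_j=B'(x_j)f_j$ from the boundary formula for the conjugation, where the paper simply cites it from \cite{GMR} as equation (\ref{omega j}).
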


\begin{proof}
It is a basic fact about Pick spaces that each space carries a conjugation
operator taking the basis of reproducing kernels to a rescaled version of its
dual basis \cite{GMR}. Specifically, if we denote the basis of $J_{\Theta}$
consisting of reproducing kernels by $\left\{  j_{i}\right\}  $ and its dual
basis by $\left\{  g_{i}\right\}  ;$ $\left\langle j_{r},g_{s}\right\rangle
=\delta_{rs}$, then the conjugate linear map $\Omega$ which satisfies%
\begin{equation}
\Omega(j_{i})=\Theta^{\prime}(x_{i})g_{i} \label{omega j}%
\end{equation}
is a conjugation. Hence if we rescale $J_{\Theta}$ we obtain an orthogonal
space. Specifically, let $\widetilde{H}$ be the rescaling of $H$ which is the
same Hilbert space, but with the new distinguished basis of kernel functions
$\tilde{B}=\left\{  r_{j}\right\}  =\{\overline{\Theta^{\prime}(x_{i})^{-1/2}%
}j_{i}\}$. Direct computation shows that the dual basis of $\tilde{B},$
$\tilde{B}^{\#}=\left\{  s_{i}\right\}  ,$ is given by setting $s_{i}%
=\Theta^{\prime}(x_{i})^{1/2}g_{i},$ $i=1,..,n.$ Using (\ref{omega j}) we
check that $\Omega$ takes the basis $\tilde{B}$ to its dual basis $\tilde
{B}^{\#}:$
\[
\Omega(r_{i})=\Omega(\overline{\Theta^{\prime}(x_{i})^{-1/2}}j_{i}%
)=\Theta^{\prime}(x_{i})^{-1/2}\Omega(j_{i})=\Theta^{\prime}(x_{i}%
)^{-1/2}\Theta^{\prime}(x_{i})g_{i}=s_{i}%
\]

The rescaled space $\widetilde{H}$ has the same norm as $H$ and hence $\Omega$
is also isometric on $\widetilde{H}.$ Thus we have shown that the previous
theorem applies to $\Omega$ and that $\Omega$ satisfies condition \textbf{(3)}
of that theorem. Hence, by that theorem, $\Omega$ is a conjugation operator on
$\widetilde{H,}$ Thus $\widetilde{H}$ is orthogonal and hence our original
space, $H=J_{\Theta},$ is r-orthogonal.
\end{proof}

The previous result together with Theorem \ref{equivalent} shows that for
$X\subset\mathbb{D}$ there is a very close relation between the Gram matrix of
$DA_{1}(X)$ and the Gram matrix of $DA_{1}(X)^{\#}.$ That relationship has
been used very effectively in analysis of interpolating sequences for the
Hardy space; see \cite[9.5, 9.6]{AM} or \cite[Ch 5, Remark 26]{Sa}. The
explicit analyses there as well as the facts used here about model spaces make
crucial use of the theory of Blaschke products. It is not clear what, if any,
analogous results hold for spaces $DA_{n}(X),$ $n>1$.

\section{Embedding{}$\,X$ in $\mathbb{CH}^{n}$}

\subsection{The Strong Triangle Inequality}

The metric $\rho$ is not a length metric and so there is no reason to believe
equality could happen in the triangle inequality for $\rho$. In fact it never
does, and points in $\mathbb{B}^{n}$ satisfy a strengthened triangle
inequality, \textit{STI}. For any $a,b,c\in\mathbb{B}^{n}$%
\begin{equation}
\frac{\left\vert \rho(a,b)-\rho(b,c)\right\vert }{1-\rho(a,b)\rho(b,c)}%
\leq\rho(a,c)\leq\frac{\rho(a,b)+\rho(b,c)}{1+\rho(a,b)\rho(b,c)}.
\tag{STI}\label{STI}%
\end{equation}
One way to verify this is to note that the Poincare-Bergman metric, $\beta,$
on the disk is a length metric and so satisfies the standard triangle
inequality, including the possibility of equality.\ Further $\rho=\tanh
c\beta,$ Here $c$ is a constant which we set to one. (The choice $c=1/2$ is
also common.) \ Combining the addition theorem for $\tanh$ and the triangle
inequality for the metric $c\beta$ produces (\ref{STI})$.$ As this suggests,
the same configuration which produce equality in the triangle inequality for
$\beta,$ namely three points on the same hyperbolic geodesic, will also
produce equality in (\ref{STI})$,$ More discussion of $\rho,$ including a
free-standing proof of (\ref{STI}), is in \cite{DW}.

We are interested in understanding conditions on $H$ related to the
possibility that $H\sim DA_{n}(X),$ as in Theorem \ref{embed}. If there is
such an $X$ then the metric space $(X,\delta_{H})$ must satisfy the STI, so we
begin by examining that.

\begin{proposition}
\label{sti proposition}Suppose for $i,j=1,2,3$ we have $\delta_{ij}>0,$
$k_{ij,}$and $\widehat{k_{ij}}$ , and they are related by
\[
\text{ }\widehat{k_{ij}}=k_{ii}^{-1/2}k_{jj}^{-1/2}k_{ij},\text{ }\delta
_{ij}^{2}=1-|\widehat{k_{ij}}|^{2},
\]
then the following are equivalent:

\begin{enumerate}
\item
\begin{equation}
\frac{\left\vert \delta_{12}-\delta_{13}\right\vert }{1-\delta_{12}\delta
_{13}}\leq\delta_{23}\leq\frac{\delta_{12}+\delta_{13}}{1+\delta_{12}%
\delta_{13}}, \label{first}%
\end{equation}

\item
\begin{equation}
\left\vert 1-\left\vert \frac{k_{21}k_{13}}{k_{23}k_{11}}\right\vert
\right\vert =\left\vert 1-\left\vert \frac{\widehat{k_{21}}\widehat{k_{13}}%
}{\widehat{k_{23}}}\right\vert \right\vert \leq\delta_{12}\delta_{13},
\label{second}%
\end{equation}

\item
\begin{equation}
\frac{1}{|\widehat{k_{12}}|^{2}}+\frac{1}{|\widehat{k_{23}}|^{2}}+\frac
{1}{|\widehat{k_{13}}|^{2}}-1\leq\frac{2}{|\widehat{k_{12}}||\widehat{k_{23}%
}||\widehat{k_{13}}|}. \label{third}%
\end{equation}

\end{enumerate}
\end{proposition}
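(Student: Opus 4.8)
The statement is a three-way equivalence among inequalities involving only the three moduli $|\widehat{k_{12}}|, |\widehat{k_{23}}|, |\widehat{k_{13}}|$ (equivalently the three distances $\delta_{12}, \delta_{23}, \delta_{13}$). Since no phase or argument information enters, the whole proposition is a purely algebraic identity-chase in a single real variable at a time, and the plan is to treat it as such. I would set $a = |\widehat{k_{12}}|$, $b = |\widehat{k_{23}}|$, $c = |\widehat{k_{13}}|$, so that $\delta_{12}^2 = 1-a^2$, etc., and rewrite each of the three conditions entirely in terms of $a,b,c$. The goal is then to show all three reduce to the same symmetric polynomial inequality.

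The plan is to prove \textbf{(1)}$\Leftrightarrow$\textbf{(3)} and \textbf{(2)}$\Leftrightarrow$\textbf{(3)} separately, using \textbf{(3)} as the common hub because it is the most symmetric formulation. For \textbf{(1)}$\Leftrightarrow$\textbf{(3)}: the double inequality (\ref{first}) is the STI with $\rho$ replaced by the $\delta_{ij}$; the two bounds on $\delta_{23}$ are, respectively, the lower and upper edges coming from the $\tanh$ addition formula. Squaring the right inequality $\delta_{23}(1+\delta_{12}\delta_{13}) \le \delta_{12}+\delta_{13}$ and the left one $\delta_{23}(1-\delta_{12}\delta_{13}) \ge |\delta_{12}-\delta_{13}|$, then combining, should collapse the pair into a single inequality symmetric in the three $\delta_{ij}$; substituting $\delta_{ij}^2 = 1 - (\text{modulus})^2$ and clearing denominators by multiplying through by $a^2 b^2 c^2$ should land exactly on (\ref{third}). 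For \textbf{(2)}$\Leftrightarrow$\textbf{(3)}: condition (\ref{second}) reads $|1 - c/(ab)| \le \delta_{12}\delta_{13}$ after using $|\widehat{k_{21}}\widehat{k_{13}}/\widehat{k_{23}}| = a c / b$ — wait, more carefully, $|\widehat{k_{21}}| = a$, $|\widehat{k_{13}}| = c$, $|\widehat{k_{23}}| = b$, so the ratio is $ac/b$. I would square the inequality $|1 - ac/b| \le \sqrt{(1-a^2)(1-c^2)}$, expand both sides, and again clear by an appropriate power of $abc$; the expectation is that this too reduces to (\ref{third}).

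The main obstacle I anticipate is the squaring of condition \textbf{(1)}, because of the absolute value $|\delta_{12}-\delta_{13}|$ on the left and the need to verify that squaring is reversible — both sides of each inequality must be shown nonnegative before squaring, and the left STI bound requires $1 - \delta_{12}\delta_{13} > 0$, which holds since each $\delta < 1$. I would dispatch this by noting all quantities lie in $[0,1)$ (the $\delta_{ij}$ are pseudohyperbolic-type distances, hence strictly less than $1$ by irreducibility, which forces $k_{ij} \ne 0$ and hence each modulus strictly positive, so the denominators in (\ref{third}) are well-defined and nonzero). A clean route is to prove both equivalences by showing each of (\ref{first}), (\ref{second}), (\ref{third}) is equivalent to the symmetric quantity $1 + 2abc - a^2 - b^2 - c^2 \ge 0$, which is recognizable as the condition that the $3\times 3$ matrix with $1$'s on the diagonal and $a,b,c$ off-diagonal be positive semidefinite — the natural positivity condition for a Gram matrix of normalized kernels. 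Organizing the proof around that determinant identity makes all three reductions transparent and handles the nonnegativity bookkeeping uniformly.
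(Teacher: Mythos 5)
Your mechanical plan --- write everything in terms of $a=|\widehat{k_{12}}|$, $b=|\widehat{k_{23}}|$, $c=|\widehat{k_{13}}|$, square the STI bounds, square condition \textbf{(2)}, and clear denominators --- is sound and is essentially the paper's own computation (the paper runs the chain \textbf{(1)}$\Rightarrow$\textbf{(2)}$\Rightarrow$\textbf{(3)} by exactly these squarings, with the reversibility handled as you describe). But the ``clean route'' you propose to organize the whole argument around is wrong: the three conditions are \emph{not} equivalent to $1+2abc-a^{2}-b^{2}-c^{2}\geq 0$. Squaring \textbf{(2)} in the form $|1-ac/b|\leq\sqrt{(1-a^{2})(1-c^{2})}$ and multiplying by $b^{2}$ gives
\[
P:=a^{2}b^{2}+b^{2}c^{2}+c^{2}a^{2}-a^{2}b^{2}c^{2}-2abc\leq 0,
\]
which is what \textbf{(3)} becomes after multiplying through by $a^{2}b^{2}c^{2}$; whereas your determinant condition is $Q:=a^{2}+b^{2}+c^{2}-2abc-1\leq 0$. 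These differ by
\[
P-Q=(1-a^{2})(1-b^{2})(1-c^{2})>0,
\]
so $P\leq 0$ strictly implies $Q<0$ but not conversely. Concretely, at $a=c=0.9$, $b=0.62$ one has $Q=0$ (your determinant vanishes, so positive semidefiniteness holds) while $P\approx 0.022>0$, so \textbf{(3)} fails. If you trusted the determinant as the common target you would ``verify'' a false equivalence.

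The error is also conceptual: $1+2abc-a^{2}-b^{2}-c^{2}\geq 0$ is (the real-entries case of) positivity of the normalized Gram matrix, which holds automatically for \emph{every} RKHS, whereas the STI is a genuinely restrictive condition --- the paper's Example immediately following this proposition exhibits a perfectly positive-definite $3\times 3$ Gram matrix (Bergman kernels at three collinear points) for which \textbf{(1)}--\textbf{(3)} all fail. So no reformulation of the STI can reduce to Gram positivity. The fix is small: keep your hub-and-spoke squaring argument but take the common target to be $P\leq 0$ (equivalently, the intermediate inequality $|1-ac/b|\leq\sqrt{(1-a^{2})(1-c^{2})}$, which is condition \textbf{(2)} itself), and drop the determinant interpretation.
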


\begin{proof}
We square all three expressions in (\ref{first}), replace $\delta_{23}^{2}$ by
$1-\left\vert \widehat{k_{23}}\right\vert ^{2}$ and rearrange to obtain%
\begin{equation}
1-\left(  \frac{\delta_{12}-\delta_{13}}{1-\delta_{12}\delta_{13}}\right)
^{2}\geq\left\vert \widehat{k_{23}}\right\vert ^{2}\geq1-\left(  \frac
{\delta_{12}+\delta_{13}}{1+\delta_{12}\delta_{13}}\right)  ^{2}. \label{a}%
\end{equation}
Now note that
\[
1-\left(  \frac{\delta_{12}+\delta_{13}}{1+\delta_{12}\delta_{13}}\right)
^{2}=\frac{\left(  1-\delta_{12}^{2}\right)  \left(  1-\delta_{13}^{2}\right)
}{\left(  1+\delta_{12}\delta_{13}\right)  ^{2}}=\frac{\left\vert
\widehat{k_{12}}\right\vert ^{2}\left\vert \widehat{k_{13}}\right\vert ^{2}%
}{\left(  1+\delta_{12}\delta_{13}\right)  ^{2}}%
\]
and there is a similar formula for the left side of (\ref{a}). Hence from
(\ref{a}) we move to%
\begin{equation}
\frac{\left\vert \widehat{k_{12}}\right\vert ^{2}\left\vert \widehat{k_{13}%
}\right\vert ^{2}}{\left(  1-\delta_{12}\delta_{13}\right)  ^{2}}%
\geq\left\vert \widehat{k_{23}}\right\vert ^{2}\geq\frac{\left\vert
\widehat{k_{12}}\right\vert ^{2}\left\vert \widehat{k_{13}}\right\vert ^{2}%
}{\left(  1+\delta_{12}\delta_{13}\right)  ^{2}}.
\end{equation}
We now extract square roots, divide by $\left\vert \widehat{k_{12}}\right\vert
\left\vert \widehat{k_{13}}\right\vert ,$ take reciprocals, and rearrange to
obtain%
\[
1-\delta_{12}\delta_{13}\leq\frac{\left\vert \widehat{k_{12}}\right\vert
\left\vert \widehat{k_{13}}\right\vert }{\left\vert \widehat{k_{23}%
}\right\vert }\leq1+\delta_{12}\delta_{13},
\]
or, equivalently%
\begin{equation}
\left\vert \frac{\left\vert \widehat{k_{12}}\right\vert \left\vert
\widehat{k_{13}}\right\vert }{\left\vert \widehat{k_{23}}\right\vert
}-1\right\vert \leq\delta_{12}\delta_{13}, \label{intermediate}%
\end{equation}
which gives (\ref{second}). To obtain (\ref{third}) we square both sides of
(\ref{intermediate}) and replace the $\delta$'s with their definition in terms
of the $k$'s and obtain%
\[
\frac{\left\vert \widehat{k_{12}}\right\vert ^{2}\left\vert \widehat{k_{13}%
}\right\vert ^{2}}{\left\vert \widehat{k_{23}}\right\vert ^{2}}-2\frac
{\left\vert \widehat{k_{12}}\right\vert \left\vert \widehat{k_{13}}\right\vert
}{\left\vert \widehat{k_{23}}\right\vert }+1\leq\left(  1-\left\vert
\widehat{k_{12}}\right\vert ^{2}\right)  \left(  1-\left\vert \widehat{k_{13}%
}\right\vert ^{2}\right)  .
\]
Dividing by $\left\vert \widehat{k_{12}}\right\vert ^{2}\left\vert
\widehat{k_{13}}\right\vert ^{2}$ and rearranging then produces (\ref{third}).
\end{proof}

This result is just a statement that several numerical inequalities are
equivalent. However, if the $k_{ij}$ are the Gram matrix entries for some RKHS
$H$ and the $\delta^{\prime}$s are the $\delta_{H}$ distances between points
in $X(H),$ then the proposition shows how an inequality about the distances
can be reformulated using Gram matrix entries$.$ In particular, if
$H=DA_{n}(X)$ then the strong triangle inequality for $DA_{n}$ insures that
the first statement holds, and the proposition then insures that the other two
also hold. Furthermore, if $H$ has a complete Pick kernel then there is an $X$
so that $H\sim DA_{n}(X)$. In that case $\delta_{H}=\delta_{DA_{n}(X)}$ and
the STI, which is automatic for $\delta_{DA_{n}(X)},$ also holds for
$\delta_{H}$. Hence, also in that case all three statements hold for
$\delta_{H}$ and the kernels from $H$.

\begin{example}
Here is an example of a space $H$ for which the points of $\left(
X(H),\delta_{H}\right)  $ fail to satisfy (\ref{STI}). Suppose $0<r<1$ and let
$K$ be the $3\times3$ matrix with entries%
\begin{align*}
&  k_{12}=k_{22}=k_{32}=k_{21}=k_{23}=1\\
&  k_{11}=k_{33}=\left(  1-r^{2}\right)  ^{-2}\\
&  k_{13}=k_{31}=\left(  1+r^{2}\right)  ^{-2}.
\end{align*}
The matrix $K$ is positive definite and hence is the Gram matrix of a RKHS
$H.$ We write $X(H)=\left\{  x_{1},x_{2},x_{3}\right\}  $ and $\delta
=\delta_{H}.$ For small values of $r$ we have
\begin{align*}
\delta_{13}  &  =2\sqrt{2}r-4\sqrt{2}r^{3}+O\left(  r^{5}\right) \\
\frac{\delta_{12}+\delta_{23}}{1+\delta_{12}\delta_{23}}  &  =2\sqrt{2}%
r-\frac{9}{2}\sqrt{2}r^{3}+O\left(  r^{5}\right)  .
\end{align*}
For small $r$ the second line is smaller than the first and the STI fails$.$

To see this example in a larger context, recall that the Bergman space,
$A^{2}=A^{2}\left(  \mathbb{D}\right)  ,$ has kernel functions $k_{z}%
(w)=\left(  1-\bar{z}w\right)  ^{-2}.$ The $A^{2}$ kernel functions for the
points $\left\{  -r,0,r\right\}  $ have Gram matrix $K$ and hence their span
is (a rescaling of) $H.$ Either because the points $\left\{  -r,0,r\right\}  $
lie on a hyperbolic geodesic, or by direct computation, the pseudohyperbolic
distances, $\rho,$ of the three points satisfy the STI with equality:%
\[
\rho_{13}=\frac{2\rho_{12}}{1+\rho_{12}^{2}}.
\]
The Hardy space, $H^{2}.$ has kernel functions $k_{z}(w)=\left(  1-\bar
{z}w\right)  ^{-1}$ and $\delta_{H^{2}}=\rho.$ Using this fact, the formulas
for the kernel functions, and the definition of $\delta,$ we find that
$\delta_{A^{2}}^{2}=\rho^{2}\left(  2-\rho^{2}\right)  .$ In particular, for
small distances
\[
\delta_{A^{2}}\sim\sqrt{2}\rho.
\]
These last two displays are not compatible with what the STI calls for in $H,
$ which is
\[
\delta_{A^{2}}(1,3)\leq\frac{2\delta_{A^{2}}(1,2)}{1+\delta_{A^{2}}(1.2)^{2}%
}.
\]

\end{example}

In this example the failure of (\ref{STI}) insures that we do not have $H\sim
DA_{n}(X).$ However we will see in Example \ref{arg} below that having
(\ref{STI}) is not enough to insure that $H\sim DA_{n}(X).$ On the other hand,
if we are only interested in the metric structure of a three point set, and
not any additional structure, then (\ref{STI}) is a complete condition for
isometric embedding in hyperbolic space.

\begin{proposition}
\label{metric embed}A three point metric space $(Z,\delta)$ with $\delta<1$
can be mapped isometrically into $(\mathbb{CH}^{n},\rho)$ if and only if it
satisfies (\ref{STI}). If that holds then the map $\Phi$ \text{can be chosen
to map into }$\mathbb{D=CH}^{1},$ in which case the image is uniquely
determined up to the action of (a possibly antiholomorphic) isometry of
$\mathbb{CH}^{1}.$
\end{proposition}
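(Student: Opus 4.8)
The plan is to handle the easy implication directly and to devote the real work to an explicit one-dimensional construction, which will simultaneously yield existence, the refinement to $\mathbb{CH}^1$, and the uniqueness claim. For the easy direction, suppose $\Phi$ embeds $(Z,\delta)$ isometrically into $(\mathbb{CH}^n,\rho)$. The inequality (\ref{STI}) holds for every triple of points in $\mathbb{B}^n$, so it holds for $\{\Phi(z_1),\Phi(z_2),\Phi(z_3)\}$; since $\Phi$ preserves distances, the three quantities appearing there are exactly the $\delta_{ij}$, so $(Z,\delta)$ satisfies (\ref{STI}). (Note that (\ref{first}) of Proposition \ref{sti proposition} is precisely (\ref{STI}) written for the $\delta_{ij}$.)

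For the converse I would construct the image in $\mathbb{D}=\mathbb{CH}^1$ after normalizing by the automorphism group. Write $\delta_{ij}=\delta(z_i,z_j)$, all positive since the points are distinct. Using the transitivity of $\operatorname*{Aut}\left(\mathbb{B}^1\right)$, place $w_1:=\Phi(z_1)=0$; then $\rho(0,w)=|w|$ forces $|w_2|=\delta_{12}$ and $|w_3|=\delta_{13}$, and a rotation lets me take $w_2=\delta_{12}>0$ on the positive real axis. The only remaining degree of freedom is the argument of $w_3=\delta_{13}e^{i\theta}$, and an isometric embedding exists exactly when some $\theta$ yields $\rho(w_2,w_3)=\delta_{23}$.

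The substantive step, and the main obstacle, is to show that as $\theta$ runs over $[0,\pi]$ the value $\rho(w_2,w_3)$ sweeps out exactly the closed interval whose endpoints are the two sides of (\ref{first}). Writing $a=\delta_{12}$ and $b=\delta_{13}$, a direct computation gives
\[
\rho(w_2,w_3)^2=\frac{a^2+b^2-2ab\cos\theta}{1+a^2b^2-2ab\cos\theta}.
\]
Setting $t=\cos\theta$ and differentiating, the numerator of the derivative works out to $-2ab(1-a^2)(1-b^2)<0$, using $(1+a^2b^2)-(a^2+b^2)=(1-a^2)(1-b^2)$; hence the right-hand side is strictly decreasing in $t$ and so $\rho(w_2,w_3)$ is strictly increasing in $\theta\in[0,\pi]$. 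At $\theta=0$ its value is $|a-b|/(1-ab)$ and at $\theta=\pi$ it is $(a+b)/(1+ab)$, precisely the lower and upper bounds of (\ref{first}). By the intermediate value theorem the required $\theta\in[0,\pi]$ therefore exists if and only if $\delta_{23}$ lies between those bounds, that is, if and only if (\ref{STI}) holds; and by strict monotonicity it is unique in $[0,\pi]$.

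Finally, for uniqueness up to isometry I would argue that the normalizations $w_1=0$ and $w_2>0$ are preserved only by the identity among holomorphic automorphisms: an automorphism fixing $0$ is a rotation, and fixing the positive point $w_2$ then forces it to be trivial. On the other hand complex conjugation $z\mapsto\bar z$, an antiholomorphic isometry of $\mathbb{CH}^1$, also preserves these normalizations and sends $w_3=\delta_{13}e^{i\theta}$ to $\delta_{13}e^{-i\theta}$. Since $\rho(w_2,w_3)$ depends on $\theta$ only through $\cos\theta$, the two choices $\pm\theta$ give the only embeddings compatible with the normalization, and they differ exactly by that conjugation. Hence the image is determined up to a possibly antiholomorphic isometry of $\mathbb{CH}^1$, as claimed. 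The computation underlying the monotonicity is the only nonroutine ingredient; everything else is normalization and an application of the intermediate value theorem.
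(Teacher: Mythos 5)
Your proof is correct, and its overall architecture is the same as the paper's: normalize by automorphisms so that $w_1=0$ and $w_2=\delta_{12}>0$, reduce to locating a third point $w_3$ with prescribed pseudohyperbolic distances to $0$ and to $w_2$, and observe that the two admissible positions for $w_3$ are complex conjugates, which gives the uniqueness statement. Where you differ is in the key existence step. The paper locates $w_3$ as a point of intersection of two pseudohyperbolic circles (centered at $0$ and at $w_2$, both of which are Euclidean circles with centers on the real axis) and asserts that nonemptiness of that intersection is equivalent to the triangle inequality for the length metric $\beta$, hence to (\ref{STI}). You instead fix $|w_3|=\delta_{13}$, sweep $w_3=\delta_{13}e^{i\theta}$ over $\theta\in[0,\pi]$, and show by the explicit computation
\[
\rho(w_2,w_3)^2=\frac{a^2+b^2-2ab\cos\theta}{1+a^2b^2-2ab\cos\theta},\qquad a=\delta_{12},\ b=\delta_{13},
\]
together with the identity $(1+a^2b^2)-(a^2+b^2)=(1-a^2)(1-b^2)$, that $\rho(w_2,w_3)$ increases strictly from $|a-b|/(1-ab)$ to $(a+b)/(1+ab)$. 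This buys you a completely self-contained verification that the attainable values of $\delta_{23}$ are \emph{exactly} the STI interval, with uniqueness of $\theta$ in $[0,\pi]$ coming for free from strict monotonicity; the paper's circle-intersection argument is shorter but leans on the equivalence of STI with the $\beta$-triangle inequality established earlier. Both arguments are sound, and your uniqueness discussion (rotations fixing a positive real point are trivial; conjugation swaps $\pm\theta$) matches the paper's conclusion.
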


\begin{proof}
We noted when we introduced (\ref{STI}) that the inequality is always
satisfied by points of $(\mathbb{CH}^{n},\rho)$. Hence, if we have the mapping
of $Z$ then (\ref{STI}) follows.

Now suppose we have $\left(  Z,\delta\right)  $\ which satisfies (\ref{STI})
and write $Z=\left\{  \zeta_{i}\right\}  _{i=1}^{3}.$ We want to find $\Phi$
mapping $Z$ into $\mathbb{CH}^{1}.$ By considering composition with Mobius
transformations we see that if we can find a map $\Phi$ with the right mapping
property, then we can find a $\Phi$ with $\Phi(\zeta_{1})=0$ and $\Phi
(\zeta_{2})=\delta(\zeta_{i},\zeta_{2})=s.$ Further, this normalization
determines $\Phi$ uniquely up to possible complex conjugation. Thus we are
reduced to showing that if we set $\Phi(\zeta_{1})=0$ and $\Phi(\zeta
_{2})=\delta(\zeta_{i},\zeta_{2})$ then we can find a $\Phi(\zeta_{3})=w,$
unique up to complex conjugation, so $\rho(0,w)=\delta(\zeta_{1},\zeta_{3})$
and $\rho(s.w)=\delta(\zeta_{2},\zeta_{3}).$

Those conditions state that $w$ must lie on the intersection of two
pseudohyperbolic circles, one centered at $0,$ the other centered at $s,$ with
radii given by the $\delta$'s. However those pseudohyperbolic circles are also
Euclidean circles with centers on the real axis. From this we see that the
intersection is either empty, or one point on the real axis, or two points,
conjugate to each other. The condition that the intersection be nonempty is
exactly that the triangle inequality for the hyperbolic metric be satisfied.
However that is equivalent to the pseudohyperbolic metric satisfying the STI.
If the intersection is nonempty, then selecting $w$ to be an intersection
point completes the proof.
\end{proof}

In short, the isometric congruence class of a three point set in
$\mathbb{CH}^{1}$ is uniquely determined by its distances. We are not
claiming, and it is not true, that$\ $the same holds for three point sets in
$\mathbb{CH}^{n}, $ $n>1.$

The fact that there are isometries of $\mathbb{CH}^{1}$ that are not
holomorphic persists in higher dimensions and is part of the discussion of
congruence in $\mathbb{CH}^{n}$, see, for instance, \cite{BE}. Going forward
when we refer to isometries we will mean the holomorphic ones,

\subsection{Two Dimensional Spaces\label{two}}

We now look in more detail at the possibility, given $H,$ of finding $\Phi$
such that $H\sim DA_{n}(\Phi(X(H))).$

If $\dim(H)=1$ there is nothing to say.

If $\dim(H)=2\ $then $H$ can be rescaled so that the Gram matrix is
\[
G(H)=%
\begin{pmatrix}
1 & 1\\
1 & g
\end{pmatrix}
,
\]
and because $G(H)$ is positive we must have $g>1.$ Set $\gamma=\sqrt{1-1/g}$.
The Gram matrix of $J=DA_{1}(\left\{  0,\gamma\right\}  )$ is identical to
$G(H).$ Hence $H\sim J$, and thus any two dimensional RKHS $H$ is a rescaling
of a space $DA_{1}(X).$

We can also describe the multiplier algebra, $\operatorname*{Mult}\left(
H\right)  . $ The multipliers are diagonal operators on a two dimensional
space, and hence can be analyzed without recourse to general theory. However,
it is convenient to take advantage of von Neumann's inequality which insures
us that if $M_{m}$ is the operator of multiplication by $m$ and it satisfies
$\left\Vert M_{m}\right\Vert =1,$ and if $\varphi$ is a conformal automorphism
of the disk, then $\varphi\left(  M_{m}\right)  =M_{\varphi(m)}$ is also a
multiplier of norm one. We also want the following elementary computational
fact about $\rho.$

\begin{lemma}
Given $\alpha.\beta\in\mathbb{C}$ and $0<\gamma<1$, there is a unique
$\lambda>0$ such that $\lambda\alpha,\lambda\beta\in\mathbb{D}$ and
$\rho(\lambda\alpha,\lambda\beta)=\gamma.$
\end{lemma}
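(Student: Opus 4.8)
The plan is to fix $\alpha,\beta$ and reduce the lemma to a one‑variable monotonicity statement. Note first that the statement tacitly requires $\alpha\neq\beta$: if $\alpha=\beta$ then $\lambda\alpha=\lambda\beta$ and $\rho(\lambda\alpha,\lambda\beta)\equiv0$, which never equals $\gamma>0$. So I assume $\alpha\neq\beta$. The constraint $\lambda\alpha,\lambda\beta\in\mathbb{D}$ is exactly $0\le\lambda<\lambda_{\ast}$, where $\lambda_{\ast}=1/\max(|\alpha|,|\beta|)$. I would then define $f(\lambda)=\rho(\lambda\alpha,\lambda\beta)$ on $[0,\lambda_{\ast})$ and show that $f$ is a continuous, strictly increasing bijection onto $[0,1)$; the lemma follows at once, with existence coming from the intermediate value theorem and uniqueness from strict monotonicity.

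First I would write $f$ explicitly. Using $\rho(z,w)=|z-w|/|1-\bar wz|$ and that $\lambda$ is real,
\[
f(\lambda)^{2}=\frac{\lambda^{2}|\alpha-\beta|^{2}}{|1-\lambda^{2}\bar\beta\alpha|^{2}}.
\]
Substituting $t=\lambda^{2}$ and setting $c=|\alpha-\beta|^{2}>0$, $a=\operatorname{Re}(\bar\beta\alpha)$, $b=|\alpha|^{2}|\beta|^{2}$, and expanding $|1-t\bar\beta\alpha|^{2}=1-2at+bt^{2}$, this becomes $g(t):=ct/(1-2at+bt^{2})$. The denominator equals $|1-t\bar\beta\alpha|^{2}$ and never vanishes on the relevant range, since $t|\bar\beta\alpha|=\lambda^{2}|\alpha||\beta|<1$ (each factor $\lambda|\alpha|,\lambda|\beta|$ is $<1$); hence $g$, and so $f$, is continuous, with $f(0)=0$. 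A short computation at the endpoint $t=\lambda_{\ast}^{2}=1/\max(|\alpha|,|\beta|)^{2}$ gives $g=1$, so $f(\lambda)\to1$ as $\lambda\to\lambda_{\ast}^{-}$; thus $f$ maps onto $[0,1)$, and existence of a $\lambda$ with $f(\lambda)=\gamma$ follows.

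The one substantive point is strict monotonicity, which yields uniqueness. Differentiating,
\[
g'(t)=c\,\frac{1-bt^{2}}{(1-2at+bt^{2})^{2}},
\]
so the sign of $g'$ is the sign of $1-bt^{2}$. Here $bt^{2}=(\lambda^{2}|\alpha|^{2})(\lambda^{2}|\beta|^{2})$, and because both $\lambda\alpha$ and $\lambda\beta$ lie in $\mathbb{D}$ we have $\lambda^{2}|\alpha|^{2}<1$ and $\lambda^{2}|\beta|^{2}<1$, whence $bt^{2}<1$ and $g'(t)>0$ throughout $(0,\lambda_{\ast}^{2})$. Therefore $g$ is strictly increasing in $t$, $f$ is strictly increasing in $\lambda$, and the value $\gamma$ is attained at exactly one $\lambda$. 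The main obstacle is precisely this monotonicity, and the computation shows it is scarcely an obstacle at all: it is forced by the in‑disk constraint, which is exactly what makes $1-bt^{2}$ positive. The degenerate cases $\alpha=0$ or $\beta=0$ are subsumed here ($a=b=0$, so $f(\lambda)=\lambda\max(|\alpha|,|\beta|)$ is visibly an increasing bijection onto $[0,1)$), so no separate treatment is needed.
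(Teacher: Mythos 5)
Your proof is correct. The paper offers no argument for this lemma (it is stated only as an ``elementary computational fact''), so there is nothing to compare against; your reduction to the single-variable function $g(t)=ct/(1-2at+bt^{2})$, with $g'$ having the sign of $1-bt^{2}=1-(\lambda^{2}|\alpha|^{2})(\lambda^{2}|\beta|^{2})>0$ forced by the in-disk constraint, together with the endpoint computation $g(\lambda_{*}^{2})=1$, is a complete and correct verification, and your observation that the statement tacitly requires $\alpha\neq\beta$ is a legitimate (and in the paper's application harmless) point.
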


Given a nonzero $M_{m}\in\operatorname*{Mult}\left(  J\right)  $ the lemma
produces a unique $\lambda>0$ such that $\rho(\lambda m(0),\lambda
m(\gamma))=\gamma=\rho(0,\gamma).$ Given that equality of distances, there is
a unique $\sigma\in\operatorname*{Aut}\left(  \mathbb{B}^{1}\right)  $ with
$\sigma(0)=\lambda m(0)\ $and $\sigma(\gamma)=$ $\lambda m(\gamma)$. The
coordinate multiplier, $M_{z},$ has norm one. That can be checked quickly by
computing the norm of the adjoint, $M_{z}^{\ast},$ using the basis of kernel
functions. Hence, by von Neumann's inequality the multiplier $N=\sigma
(M_{z})=M_{\sigma(z)}$ also has norm one. By comparing values we see that
$N=\lambda M_{m}$ and hence $\left\Vert M_{w}\right\Vert =1/\lambda.$
Furthermore, $\lambda$ could be written explicitly in terms of the values
taken by $m$ and the parameter $\gamma$ which is determined by the space $J$.

\subsection{Three Dimensional Spaces}

We now look at the case $\dim(H)=3$ in some detail. The situation is more
complicated than $\dim(H)=2$ because the realization of $H$ as $DA_{n}(X)$ is
not automatically possible. On the other hand, in three dimensions the Pick
property is equivalent to the CPP and hence some complications which appear in
higher dimensions are avoided.

\begin{theorem}
\label{n=3}Suppose $H$ is a three dimensional RKHS, $X=X(H)=\left\{
x_{i}\right\}  _{i=1}^{3}.$ The following are equivalent:

\begin{enumerate}
\item $H$ has the the complete Pick property.

\item $H$ \ has the Pick property.

\item $\exists i,j,$ $i\neq j$ with $\delta_{H}(x_{i},x_{j})=\delta_{G}%
(x_{i},x_{j}).$

\item $LF_{123}^{2}\leq\delta_{13}^{2}.$

\item
\begin{equation}
\frac{1}{|\widehat{k_{12}}|^{2}}+\frac{1}{|\widehat{k_{23}}|^{2}}+\frac
{1}{|\widehat{k_{13}}|^{2}}-1\leq\frac{2\cos A_{123}}{|\widehat{k_{12}%
}||\widehat{k_{23}}||\widehat{k_{13}}|} \label{B}%
\end{equation}

\item There are $w\in\mathbb{C}$, $s,t>0$ such that with
\begin{equation}
\Phi(X)=\left\{  (0,0),(s,0),(w,t)\right\}  \subset\mathbb{B}^{2}%
=\mathbb{CH}^{2}, \label{normal form}%
\end{equation}
we have $H\sim DA_{2}\left(  \Phi(X)\right)  ,$
\end{enumerate}

Furthermore, the location of the points of $\Phi(X),$ the rescaling
equivalence class of $H,$ and the congruence class of the triangle with
vertices $\Phi(X)$ are uniquely determined by the rescaling invariant
parameters $\delta=\{\delta_{12},\delta_{13},\delta_{23},A_{123}\}.$
\end{theorem}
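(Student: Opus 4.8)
The plan is to treat the six conditions as three linked clusters — the algebraic core \textbf{(1)}$\Leftrightarrow$\textbf{(4)}$\Leftrightarrow$\textbf{(5)}, the geometric realization \textbf{(1)}$\Leftrightarrow$\textbf{(6)}, and the extremal conditions \textbf{(2)},\textbf{(3)} — and then close a single cycle. The engine for the core is the McCullough--Quiggin characterization \cite[Thm 7.6]{AM}: $H$ has the CPP iff every matrix $MQ_r(H)$ in (\ref{MQ}) is positive semidefinite. In dimension three each $MQ_r$ is $2\times 2$, so this reduces to a determinant sign.

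First I would rewrite $MQ_1$ in invariant form. Its diagonal entries are $\delta_{12}^2$ and $\delta_{13}^2$, both nonnegative, and a short normalization computation turns the off-diagonal entry into $1-\widehat{k_{21}}\widehat{k_{13}}/\widehat{k_{23}}$. Hence $MQ_1$ is positive semidefinite iff $\det MQ_1=\delta_{12}^2\delta_{13}^2-|1-\widehat{k_{21}}\widehat{k_{13}}/\widehat{k_{23}}|^2\ge 0$. Since by (\ref{L}) the subtracted modulus equals $\delta_{12}^2 LF_{123}^2$, this is $\delta_{12}^2(\delta_{13}^2-LF_{123}^2)\ge 0$, i.e.\ condition \textbf{(4)}. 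Expanding the modulus and using $\arg(\widehat{k_{21}}\widehat{k_{13}}\widehat{k_{32}})=-A_{123}$ introduces the cosine term and, after clearing denominators, the manifestly permutation-symmetric inequality \textbf{(5)}/(\ref{B}). Because (\ref{B}) is symmetric in $\{1,2,3\}$, we get $\det MQ_1\ge 0\Leftrightarrow\det MQ_r\ge 0$ for every $r$; with the automatic nonnegativity of the diagonals this yields $MQ_r$ positive semidefinite for all $r$, so \textbf{(1)}$\Leftrightarrow$\textbf{(4)}$\Leftrightarrow$\textbf{(5)}.

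For \textbf{(1)}$\Leftrightarrow$\textbf{(6)} I would invoke Theorem \ref{embed} and Corollary \ref{small space}: a three dimensional CPP space is $\sim DA_2(X')$, and the normal-form proposition places $X'$ as $\{(0,0),(s,0),(w,t)\}$ with $s,t>0$; conversely any such set inherits the CPP. To make the link quantitative I would match invariants directly from the $DA_2$ kernel, obtaining $s=\delta_{12}$, $|w|^2+t^2=\delta_{13}^2$, and, in these basepoint-normalized coordinates, $LF_{123}^2=|w|^2$, so that $t^2=\delta_{13}^2-LF_{123}^2$; solvability with $t\ge 0$ is once more \textbf{(4)}. (The degenerate case $t=0$, where the three points lie on a single complex geodesic, I would treat separately.) The ``Furthermore'' clause follows from the same computation: $s$, $w$, and $t$, hence $\Phi(X)$, are uniquely read off from $\{\delta_{12},\delta_{13},\delta_{23},A_{123}\}$, and Theorem \ref{reduction} upgrades this to uniqueness of the congruence class of the triangle and of the rescaling class of $H$.

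It remains to fold in \textbf{(2)} and \textbf{(3)}, closing the cycle \textbf{(1)}$\Rightarrow$\textbf{(2)}$\Rightarrow$\textbf{(3)}$\Rightarrow$\textbf{(4)}. The first arrow is trivial. For \textbf{(2)}$\Rightarrow$\textbf{(3)}: the two-point subspace $H_{\{x_i,x_j\}}$ is two dimensional, hence has the CPP and satisfies $\delta_G=\delta_H$ by Proposition \ref{same delta}; its norm-one extremal multiplier extends, by the Pick property of $H$, to a norm-one multiplier on $H$ vanishing at $x_j$ and realizing $\delta_H(x_i,x_j)$, forcing $\delta_G(x_i,x_j)=\delta_H(x_i,x_j)$. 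The hard part is \textbf{(3)}$\Rightarrow$\textbf{(4)}, which I expect to be the main obstacle: here the multiplier norm in $\delta_G$ is the honest operator norm on $H$, not governed a priori by any Pick matrix when $H$ lacks the CPP, so I would solve the three-point extremal problem (\ref{extremal g}) for the pair directly and show that its value reaches $\delta_H$ exactly when $\det MQ_1\ge 0$ — equivalently, argue in contrapositive that $\det MQ_1<0$ forces a strictly positive gap $\delta_H-\delta_G$ for every pair. This operator-norm computation, rather than any of the algebra above, is where the real work lies.
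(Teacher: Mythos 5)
Your overall architecture is sound, and your route to the core equivalence $\textbf{(1)}\Leftrightarrow\textbf{(4)}\Leftrightarrow\textbf{(5)}$ via the McCullough--Quiggin determinants is a genuine alternative to the paper's: the computation $\det MQ_1=\delta_{12}^2\bigl(\delta_{13}^2-LF_{123}^2\bigr)$ is correct, the observation that the symmetric inequality (\ref{B}) makes all three determinant conditions coincide is correct, and since each $MQ_r$ is a $2\times2$ Hermitian matrix with nonnegative diagonal this does give positivity. The paper instead never uses the converse half of the MQ criterion for this theorem; it closes the single cycle $\textbf{(1)}\Rightarrow\textbf{(2)}\Rightarrow\textbf{(3)}\Rightarrow\textbf{(4)}\Rightarrow\textbf{(6)}\Rightarrow\textbf{(1)}$, constructing the embedding by hand from \textbf{(4)} (exactly as in your $s=\delta_{12}$, $LF_{123}=|w|$, $t^2=\delta_{13}^2-LF_{123}^2$ computation) and getting $\textbf{(6)}\Rightarrow\textbf{(1)}$ from Theorem \ref{embed}. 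Your $\textbf{(2)}\Rightarrow\textbf{(3)}$ via the two-dimensional subspace and the Pick extension is also fine.

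The genuine gap is $\textbf{(3)}\Rightarrow\textbf{(4)}$, which you defer as ``where the real work lies'' and propose to settle by directly computing the three-point multiplier norm. That computation is never carried out in your proposal, and it is not needed. The missing idea is that the single equality $\delta_G(x_1,x_2)=\delta_H(x_1,x_2)$ already identifies the extremal multiplier in closed form: for any competitor $m$ in (\ref{extremal g}), the function $m\hat k_{x}$ is a competitor in (\ref{extremal d}) with the same value, so when the two extremal values coincide, uniqueness of the Hilbert-space extremal (which is the explicit function (\ref{hilbert space formula}), just a normalized orthogonal projection) forces $m_\delta\hat k_{x}=h_\delta$ and hence $m_\delta$ equal to the explicit function (\ref{multiplier formula}). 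That function is therefore a multiplier of norm one vanishing at one of the two points, its value at the third point has modulus exactly $LF_{123}$, and the universal inequality $|m_\delta(x_3)|\le\delta_G(x_3,\cdot)\le\delta_H(x_3,\cdot)$ then yields $LF_{123}\le\delta_{13}$, i.e.\ \textbf{(4)}, with no operator-norm computation at all. Without this (or a completed version of the hard computation you sketch), your cycle does not close and conditions \textbf{(2)} and \textbf{(3)} are not tied back to the rest.
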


\begin{corollary}
\label{pi/2}If $H$ is a three dimensional RKHS with the CPP then $\cos
A_{123}>0,$ $\left\vert A_{123}\right\vert \leq\pi/2.$
\end{corollary}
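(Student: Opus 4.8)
The plan is to extract the conclusion directly from condition \textbf{(5)} of Theorem \ref{n=3}, which holds precisely when $H$ has the CPP. The whole argument rests on observing that the left-hand side of inequality (\ref{B}) is strictly positive, after which the sign of $\cos A_{123}$ is forced.

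First I would record that, by the standing irreducibility assumption, for any two distinct points $x_i,x_j$ the kernels $k_i$ and $k_j$ are linearly independent, so strict Cauchy--Schwarz gives $|\widehat{k_{ij}}|<1$, equivalently $\delta_{ij}>0$. Consequently each of the three terms $1/|\widehat{k_{12}}|^2$, $1/|\widehat{k_{23}}|^2$, $1/|\widehat{k_{13}}|^2$ strictly exceeds $1$, so the left side of (\ref{B}) exceeds $3-1=2$ and in particular is positive.

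Since $H$ has the CPP, Theorem \ref{n=3} asserts that (\ref{B}) holds, so the right-hand side $2\cos A_{123}/(|\widehat{k_{12}}||\widehat{k_{23}}||\widehat{k_{13}}|)$ is at least this positive quantity and hence is itself positive. Its denominator is a product of moduli of nonzero inner products (again by irreducibility) and so is positive; therefore $\cos A_{123}>0$. Finally, because the angular invariant is defined with $|\arg|\leq\pi$ in (\ref{define A}), the condition $\cos A_{123}>0$ gives $|A_{123}|<\pi/2$, which in particular yields the asserted bound $|A_{123}|\leq\pi/2$.

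There is essentially no obstacle here: the corollary is an immediate consequence of the equivalence established in Theorem \ref{n=3}. The only point needing care is the strict inequality $|\widehat{k_{ij}}|<1$, which makes the left side of (\ref{B}) strictly positive rather than merely nonnegative; this strictness is exactly what the irreducibility hypothesis on $H$ supplies.
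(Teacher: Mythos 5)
Your proposal is correct and follows essentially the same route as the paper's own proof: use strict Cauchy--Schwarz (from irreducibility) to get $|\widehat{k_{ij}}|<1$, conclude the left side of (\ref{B}) is positive, and hence that $\cos A_{123}>0$. The extra detail you supply (the left side exceeds $2$, and the deduction $|A_{123}|<\pi/2$ from the convention $|\arg|\leq\pi$) is a harmless elaboration of the same argument.
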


\begin{proof}
[Proof of the Corollary]An application of the Cauchy-Schwartz inequality shows
$|\widehat{k_{ij}}|\,<1.$ Hence the left hand side of (\ref{B}) is positive,
which shows $\cos A_{123}$ must be positive.
\end{proof}

The first two statements in the theorem are general properties of $H$ and
$\operatorname*{Mult}(H),$ the next three concern numerical invariants derived
from those function spaces. Statement \textbf{(5)} is Brehm's classical
description of parameters which determine the congruence class of triangles in
$\mathbb{CH}^{2},$ as given in \cite[Pg. 92]{BE} and translated into our
notation. The final statement describes a set $\Phi(X)$ in $\mathbb{CH}^{n}$
whose existence is required by Theorem \ref{embed}.

Even if $n=\dim(H)>3$ it is true that \textbf{(1)} implies \textbf{(2)}
implies \textbf{(3)} implies \textbf{(4)}, and that \textbf{(4)} and
\textbf{(5)} are equivalent. However in that range \textbf{(3)} is weaker than
\textbf{(2)} which is weaker than \textbf{(1)}. Also, in that range a simple
statement in the style of \textbf{(4)} is not enough to get a representation
such as \textbf{(6)}. Our work for $n>3$ centers on understanding how to
replace \textbf{(4)}. The path to proving \textbf{(6)} implies \textbf{(1)}
depends on how the CPP is defined. We will avoid any work at that spot by
accepting Theorem \ref{embed} which states that for finite dimensional spaces
the existence of a representation as in \textbf{(6)} is implies the CPP.

\begin{proof}
[Proof of the Theorem]If \textbf{(1)} holds then so does \textbf{(2)} which is
just a restricted version of \textbf{(1)}. Condition \textbf{(2)} is enough to
appeal to Proposition \ref{same delta} (whose proof only uses the Pick
property, not the CPP) and obtain the equality of $\delta_{G}$ and $\delta
_{H},$ i.e. (\ref{d=d}), for each pair of indices. \textbf{(3)} is the weaker
statement that the equality holds for a single pair of indices. However any
one equality $\delta_{Gij}=\delta_{Hij}$ is enough to give the formula
(\ref{multiplier formula}) for the extremal multiplier for that particular
pair of indices; and that is what we need to go forward. By renumbering, and
without loss of generality, we suppose we have the particular case that
$\delta_{G12}=\delta_{H12}.$ In that case, we know from
(\ref{multiplier formula}) that%
\[
M_{x_{2},x_{1}}(\zeta)=\frac{1}{\delta_{12}}\left(  1-\frac{k_{21}k_{1}\left(
\cdot\right)  }{k_{11}k_{2}\left(  \cdot\right)  }\right)
\]
is a multiplier of norm one. Because of that and the fact that $M_{x_{2}%
,x_{1}}(x_{2})=0$ we must have
\[
\left\vert M_{x_{2},x_{1}}(x_{3})\right\vert \leq\delta_{G23}\leq\delta
_{H23};
\]
the first inequality by the definition of $\delta_{G23},$ the second because,
as we mentioned in Section \ref{extremal problems}, the $\delta_{G} $'s are
always dominated the $\delta_{H}^{\prime}$'s. Rearranging that inequality
gives statement \textbf{(4)}.

Statements \textbf{(4)} and \textbf{(5)} are equivalent by an algebraic
rewriting, similar to that connecting (\ref{second}) and (\ref{third}) in the
proof of Proposition \ref{sti proposition}. However instead of starting with
\[
\left\vert 1-\left\vert \frac{\widehat{k_{21}}\widehat{k_{13}}}%
{\widehat{k_{23}}}\right\vert \right\vert \leq\delta_{12}\delta_{13},
\]
we start with the stronger statement \textbf{(4)}, which, written out using
(\ref{L}), is
\begin{equation}
\left\vert 1-\frac{\widehat{k_{21}}\widehat{k_{13}}}{\widehat{k_{23}}%
}\right\vert \leq\delta_{12}\delta_{13}. \label{stronger}%
\end{equation}
We now follow the proof of Proposition \ref{sti proposition}. We square both
sides of (\ref{intermediate}) and replace the $\delta$'s with their definition
in terms of the $k$'s and obtain%
\[
\frac{\left\vert \widehat{k_{12}}\right\vert ^{2}\left\vert \widehat{k_{13}%
}\right\vert ^{2}}{\left\vert \widehat{k_{23}}\right\vert ^{2}}-2\frac
{\left\vert \widehat{k_{12}}\right\vert \left\vert \widehat{k_{13}}\right\vert
}{\left\vert \widehat{k_{23}}\right\vert }\operatorname{Re}\cos\arg\left(
\frac{\widehat{k_{21}}\widehat{k_{13}}}{\widehat{k_{23}}}\right)
+1\leq\left(  1-\left\vert \widehat{k_{12}}\right\vert ^{2}\right)  \left(
1-\left\vert \widehat{k_{13}}\right\vert ^{2}\right)  .
\]
Dividing by $\left\vert \widehat{k_{12}}\right\vert ^{2}\left\vert
\widehat{k_{13}}\right\vert ^{2}$, using the definition of $A_{123},$ and
rearranging then produces \textbf{(5)}.

We now go to the basic construction, showing that \textbf{(4)} insures that we
can select the required points in hyperbolic space. We know from our analysis
of normal forms that if we can find some $X\subset\mathbb{CH}^{k}$ so that
$H\sim DA_{k}(X)$ then we can find a $X=\left(  x_{1},x_{2},x_{3}\right)
\subset\mathbb{CH}^{2}$ in normal form, i.e. as described in \textbf{(5)}, and
having $H\sim DA_{2}(X),$ Hence the question is if we can find $s,w,t$ so that
the following system is satisfied. Here the $\delta$'s and $k$'s are data from
$H;$ $s,w,t$ are the unknowns:
\begin{align}
\delta_{\mathtt{12}}^{2}  &  =1-\frac{\left\vert k_{\mathtt{12}}\right\vert
^{2}}{k_{\mathtt{22}}k_{\mathtt{11}}}=1-\left\vert \widehat{k_{12}}\right\vert
^{2}=s^{2}\label{geometrh}\\
\delta_{\mathtt{1}3}^{2}  &  =1-\frac{\left\vert k_{\mathtt{13}}\right\vert
^{2}}{k_{\mathtt{33}}k_{\mathtt{11}}}=1-\left\vert \widehat{k_{13}}\right\vert
^{2}=\left\vert w\right\vert ^{2}+t^{2}\label{geom 2}\\
\delta_{\mathtt{2}3}^{2}  &  =1-\frac{\left\vert k_{\mathtt{23}}\right\vert
^{2}}{k_{\mathtt{22}}k_{\mathtt{33}}}=1-\left\vert \widehat{k_{23}}\right\vert
^{2}=1-\frac{\left(  1-\delta_{\mathtt{12}}^{2}\right)  \left(  1-\delta
_{\mathtt{1}3}^{2}\right)  }{\left\vert 1-s\bar{w}\right\vert ^{2}%
}\label{geom 3}\\
A_{123}  &  =\arg k_{12}k_{23}k_{31}=-\arg\left(  1-s\bar{w}\right)
\label{geom 4}%
\end{align}
We start by setting $x_{1}=(0,0)$ and $s=\delta_{12}\ $so that $x_{2}=(s,0). $
Once that is done, then (\ref{geom 3}) and (\ref{geom 4}) force the value of
$1-s\bar{w},$ and hence of $w.$ If we can show that $\left\vert w\right\vert
\leq\delta_{13}$ then we can select a unique nonnegative $t$ such that
(\ref{geom 2}) holds. At that point we will have that $\left(  w,t\right)  $
is in the ball and all the required equations are satisfied, and we will be
finished. To obtain the required estimate for $w$ note that, using
$x_{1}=(0,0)$ and $x_{2}=(s,0)$ and $x_{3}=(w,t)\ $and the formula for the
kernel function, the definition of $LF_{123}$ in (\ref{L}) gives
$LF_{123}=\left\vert w\right\vert $. Thus statement 4 simplifies to the
required $\left\vert w\right\vert \leq\delta_{13}.$

Combined with the earlier comments this completes the proof.
\end{proof}

\begin{corollary}
\label{disk}In the situation of the previous theorem the following are equivalent:

\begin{enumerate}
\item[4'] $LF_{123}^{2}=\delta_{13}^{2}$

\item[6'] For some $w\in\mathbb{C}\ $and $s>0,$ and with $\Phi(X)=\left\{
0,s,w\right\}  \subset\mathbb{B}^{1}=\mathbb{CH}^{1},$ we have $H\sim
DA_{1}\left(  \Phi(X)\right)  .$
\end{enumerate}
\end{corollary}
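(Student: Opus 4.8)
The corollary claims an equivalence between a limiting case of the embedding inequality and the embedding landing in a one-complex-dimensional totally geodesic submanifold. This is the "boundary case" of Theorem~\ref{n=3}, where the slack inequality $LF_{123}^2 \le \delta_{13}^2$ becomes equality, and the corresponding picture is that the three points $\Phi(X)$ fit inside a copy of the Poincar\'e disk $\mathbb{CH}^1$ rather than genuinely requiring two complex dimensions.

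\begin{proof}
The plan is to exploit the explicit construction from the proof of Theorem~\ref{n=3}, where the embedding was produced in normal form as $\Phi(X)=\{(0,0),(s,0),(w,t)\}$. The key observation established there is the identity $LF_{123}=|w|$, obtained by evaluating the definition~(\ref{L}) of $LF_{123}$ on those coordinates. Since equation~(\ref{geom 2}) reads $\delta_{13}^2=|w|^2+t^2$, the hypothesis \textbf{4'} that $LF_{123}^2=\delta_{13}^2$ is exactly equivalent to $t^2=0$, i.e. to $t=0$.

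First I would argue that \textbf{4'} implies \textbf{6'}. Assuming $LF_{123}^2=\delta_{13}^2$, Theorem~\ref{n=3} already guarantees (since equality forces the weak inequality of statement \textbf{(4)}, so the CPP holds and the embedding exists) that we obtain points $(0,0),(s,0),(w,t)$ in normal form with $H\sim DA_2(\Phi(X))$. By the identity $LF_{123}=|w|$ and equation~(\ref{geom 2}), equality forces $t=0$, so all three points have vanishing second coordinate. They therefore lie in the horizontal slice $L=\{(z,0):|z|<1\}$, which Section~\ref{complex hyperbolic space} identifies as a model complex geodesic isometric to $\mathbb{CH}^1$. Identifying $L$ with $\mathbb{B}^1=\mathbb{CH}^1$ via the first coordinate, the three points become $\{0,s,w\}\subset\mathbb{CH}^1$, and since $DA_2$ restricted to kernels based at points of $L$ agrees with $DA_1$ on those points (the kernel values $(1-\langle x,y\rangle)^{-1}$ depend only on the common nonzero coordinate), we get $H\sim DA_1(\Phi(X))$, which is \textbf{6'}.

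For the converse, if \textbf{6'} holds then $\Phi(X)=\{0,s,w\}\subset\mathbb{CH}^1$, and embedding $\mathbb{CH}^1$ as the slice $L\subset\mathbb{CH}^2$ realizes these as $(0,0),(s,0),(w,0)$ with $t=0$. Computing $LF_{123}$ from~(\ref{L}) on these coordinates again yields $LF_{123}=|w|$, while $\delta_{13}^2=|w|^2$ by~(\ref{geom 2}) with $t=0$, so $LF_{123}^2=\delta_{13}^2$, which is \textbf{4'}. I expect the main obstacle to be purely bookkeeping: one must confirm that the rescaling-invariant quantities $LF_{123}$ and $\delta_{13}$ computed inside the one-dimensional space $DA_1(\Phi(X))$ genuinely coincide with those computed inside the ambient $DA_2$, but this follows immediately from the discussion in Section~\ref{larger spaces} that these Gram-matrix invariants are unaffected by passing to a larger containing space.
\end{proof}
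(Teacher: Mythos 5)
Your proof is correct and follows essentially the same route as the paper: the paper's own argument is the one-line observation that $LF_{123}=\left\vert w\right\vert$ from the proof of Theorem \ref{n=3} makes 4' equivalent to $t=0$. Your additional bookkeeping about the slice $L$ and the stability of the Gram-matrix invariants is sound but not needed beyond what the paper already records.
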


\begin{proof}
Using the fact $LF_{123}=\left\vert w\right\vert $ from the previous proof we
see that 4' is equivalent to $t=0.$
\end{proof}

\subsubsection{About $LF_{123}^{2}$\label{geometry}}

Condition \textbf{(4)} on $LF_{123}^{2}$ is related to the positivity of one
of the matrices $MQ$ introduced in (\ref{MQ}). It is a basic fact from the
theory of spaces with the CPP that a necessary and sufficient condition for a
finite dimensional $H$ to have the CPP is that the matrices (\ref{MQ}) be
positive semidefinite, \cite[Thm. 7.6]{AM} and Theorem \ref{MQ theorem}. If
$\dim(H)=n$ then the general theorem requires consideration of $n$ matrices of
size $\left(  n-1\right)  $ $\times\left(  n-1\right)  $. However in three
dimensions the situation simplifies and we only need consider the positivity
of a single $2\times2$ matrix from (\ref{MQ}):%
\[
MQ=%
\begin{pmatrix}
1-1/k_{22} & 1-1/k_{23}\\
1-1/k_{32} & 1-1/k_{33}%
\end{pmatrix}
.
\]
That matrix has positive diagonal elements and hence its positivity reduces to
the positivity of $\det\left(  MQ\right)  ,$ which is equivalent to Condition
\textbf{(4)} .

The statement $LF_{123}^{2}\leq\delta_{13}^{2}$ is also an inequality between
two Euclidean distances in $\mathbb{B}^{n};$ it compares the length of the
hypotenuse of a right triangle to the length of one of the other sides. After
having placed the points $\Phi(x_{1})$ and $\Phi(x_{2})$ at $(0,0)\ $and
$(s,0)$ we want to find $w$ and $t$ so that if we place $\Phi(x_{3})$ at
$(w,t)$ then the required equalities hold. We do that in two steps. First we
locate the point $(w,0),$ the projection of the not-yet-located final point
$\Phi(x_{3})$ $=(w,t),$ into the span of the points already selected,
$(0,0)\ $and $(s,0)$, (In this context the orthogonal Euclidean projection and
the hyperbolic nearest point projection are the same.) Once that is done, we
can find an appropriate $t$ if, but only if, the side $(0,0)(w,0)$ of the
resulting right triangle $(0,0)(w,0)(w,t)$ would be shorter than its
hypotenuse $(0,0)(w,t).$ That inequality is the statement that $\left\vert
(w,0)\right\vert \leq\delta_{13}\ $which can be reformulated as $LF_{123}%
^{2}\leq\delta_{13}^{2}.$ Thus we split finding the final vector into
computing $(w,0),$ its footprint in the span of the vectors already selected,
and the\textit{\ length of its footprint, }$LF.$ If the length of the
footprint is not longer than the length of the final vector then there is no
obstruction to locating the final point by specifying a nonnegative height for
that vector above the footprint.

This scheme for placing $\Phi(x_{3})$ is similar to one we used in adjoining
points to sets in the construction of normal forms in Theorem \ref{reduction}
and to the methods we use later in Theorems \ref{one side} and
\ref{embed tree}. The general situation is that we have placed points
$\left\{  x_{1}=0,x_{2},...,x_{k}\right\}  $ in the ball and need to place a
new point, $x_{k+1}.$ We let $S_{k}$ be the span of $\left\{  x_{j}\right\}
_{j=`}^{k}.$ We place $x_{k+1}$ by first identifying an auxiliary point
$P_{k}(x_{k+1}),$ the point that, if we knew the location of $x_{k+1},$ would
be the orthogonal projection of $x_{k+1}$ onto $S_{k}$; the nearest point
projection in terms of both the Euclidean and hyperbolic distances. The first
coordinates of $x_{k+1}$ will be the first coordinates of $P_{k}(x_{k+1}).$
The remaining data needed to describe the location of $x_{k+1}$ is its
distance from the point $P_{k}(x_{k+1}),$ and that Euclidean distance $d$
becomes the $k^{th}$ coordinate of $x_{k+1},$ the height of $x_{k+1}$ above
$S_{k}.$ In this language the estimate $LF_{123}^{2}\leq\delta_{13}^{2}$ in
the previous proof is essentially the requirement that $d$ not be negative. A
similar comment applies to the estimate (\ref{norm}) at the end of the proof
of Theorem \ref{one side}.

We discuss the geometric interpretation of the special values $LF_{123}^{2}=0
$ and $LF_{123}^{2}=\delta_{13}^{2}$ in Section \ref{geometry of X}.

\subsubsection{About $A(x,y,z)$ and Area\label{about A}}

By comparing (\ref{B}) to (\ref{third}) we see that the conditions in the
previous theorem imply STI. However STI itself is not sufficient for the
statements in the theorem. Here are examples of spaces with the STI which fail
the conclusion of Corollary \ref{pi/2} about the size of $A_{123}$ and hence
do not have the CPP.

\begin{example}
\label{arg}Pick $r,\lambda$ with $0<r<1,$ $\lambda>0,$ Let $\omega$ be a
primitive cube root of unity$.$ Set $y_{i}=r\omega^{j},$ $j=1,2,3.$ Let
$K(r,\lambda)$ be the $3\times3$ matrix with entries%
\[
k_{ij}=\left(  1-y_{i}\overline{y_{j}}\right)  ^{-\lambda}%
\]
It can be verified by hand that $K(r,\lambda)$ is a positive matrix and hence
determines a three dimensional RKHS, $H=H(r,\lambda)$. Alternatively,
$k(y,w)=\left(  1-y\bar{w}\right)  ^{-\lambda}$ is the reproducing kernel for
a space $\mathcal{D}_{\lambda}$ of holomorphic functions on the disk, and
$K(r,\lambda)$ is the Gram matrix of the $\mathcal{D}_{\lambda}$ kernel
functions for the points $y_{1},y_{2},y_{3}.$ The space $\mathcal{D}_{\lambda
}$ has the CPP if, but only if, $\lambda\leq1.$ In those cases $H(r,\lambda)$
inherits the CPP from the containing $\mathcal{D}_{\lambda}$ and hence there
is a map $\Phi$ of $X(H(r,\lambda))$ into $\mathbb{CH}^{n}$ so that
$H(r,\lambda)\sim DA_{n}(\Phi(X(H(r,\lambda))).$

However for some $\lambda$ there is no embedding. The inequality (\ref{STI})
is not the problem. The symmetry of the configuration under rotations of
$2\pi/3$ insures that all the $\delta_{ij}$ are the same, in which case
(\ref{STI}) is automatic. However for some parameter values the space
$H(r,\lambda)$ fails to satisfy the conclusion of Corollary \ref{pi/2} which
requires $\cos A_{123}>0.$ For $H(r,\lambda)$ we have
\begin{align*}
A_{123}  &  =\arg k_{12}k_{23}k_{31}\\
&  =\arg\left(  \frac{1}{\left(  1-r^{2}\omega^{1-2}\right)  \left(
1-r^{2}\omega^{2-3}\right)  \left(  1-r^{2}\omega^{3-1}\right)  }\right)
^{\lambda}\\
&  =3\lambda\arg\left(  1-r^{2}\omega\right)
\end{align*}
Thus $\cos A_{123}<0$ for some $r,\lambda.$
\end{example}

Suppose $H$ is three dimensional. The previous example shows that the values
of $A,$ which are determined by the kernel functions in $H,$ can indicate an
obstruction to having $H\sim DA_{n}(X).$ On the other hand, recalling the
comments in Section \ref{numerical}, if there is such a representation then
$A$ is also a geometric invariant of $X.$ In that case it makes sense to ask
for its geometric interpretation.

We regard a triple of points $X=\left\{  x,y,z\right\}  \subset\mathbb{CH}^{n}
$ as the vertices of a geodesic triangle, $\Delta\subset$ $\mathbb{CH}^{n}$, a
triangle with vertices $X$ and sides which are geodesic segments connecting
the vertices. There are natural ways to measure the size of the sides of
$\Delta,$ either with $\rho$ or with $\beta$, but there is not a simple notion
of the area of $\Delta$. In this section and the next we discuss the relation
between the values of $A$ and two substitutes for the area for $\Delta.$
Another theme that runs through both discussions, although we will not give it
a quantitative formulation, is that $A$ measures how well $\Delta$ fits into a
single complex geodesic.

The congruence class of a Euclidean triangle is determined by its three side
lengths, but the analogous statement fails in complex hyperbolic space. As
suggested by our parameter count, as shown by Brehm in his classic analysis of
triangles in both projective and hyperbolic space \cite{B}, and as can be seen
from (\ref{B}), the side length data is not enough to determine the triangle.
An additional parameter is needed. Various quantities are used for a fourth
parameter; here we are using the angular invariant $A$, a version of the
\textit{invariante angulaire} introduced by E. Cartan in \cite{C}. Related
invariants are discussed in \cite[Ch. 7]{Go}.

If $n=1$ we are in the unit disk where there is a natural notion of the
surface of the triangle. In that case we can use the classical
Poincare-Bergman area element to define/compute the area of $\Delta,$
$\operatorname*{Area}(\Delta)$. Furthermore, in that case
$\operatorname*{Area}(\Delta)=2A(x,y,z).$ That can be proved by taking
advantage of the classical formula relating $\operatorname*{Area}(\Delta)$ to
the angles of $\Delta,$ a detailed discussion is in \cite[Sec 1]{C}. However
if $\Delta$ is in general position in $\mathbb{CH}^{n}$ then there is no
natural notion of the surface of $\Delta$ on which to base a notion of
"surface area". Nevertheless it is still possible to define the
\textit{symplectic area }of $\Delta,$ $\mathcal{SA(\Delta)}$. Complex
hyperbolic space carries a natural symplectic two form, $\omega,$ a type of
area form. Given the sides of $\Delta,$ select a smooth real two manifold
$\Sigma(\Delta)$ connecting the three sides of $\Delta.$ Define the symplectic
area of the triangle $\Delta$ by $\mathcal{SA(\Delta)=}\int_{\Sigma(\Delta
)}\omega.$ Because $\omega$ is a closed form Stokes' theorem allows us to
evaluate this as a boundary integral over the sides of $\Delta$, in particular
the value does not depend on the choice of $\Sigma(\Delta).$ Because that is
the only use we make of $\Sigma(\Delta),$ we need not be explicit about the
details of its construction. On the disk $\mathbb{D=CH}^{1}\subset
\mathbb{CH}^{n}$ the symplectic form $\omega$ is the same as the hyperbolic
area element and so, in that case $\mathcal{SA(\Delta)=}\operatorname*{Area}%
(\Delta)=2A(x,y,z).$ However much more is true. For general $\Delta
\subset\mathbb{CH}^{n},$ $2A(x,y,z)$ $=\mathcal{SA(\Delta)}.$ This general
fact requires more work. It was proved by proved by Hangan and Masalla
\cite{HM} by explicit evaluation of the double integral. It can also be
proved, both in this context and much more general ones, using Stokes'
theorem, see the discussion is \cite{C}. Of course once $\Delta$ is in general
position $\mathcal{SA}(\Delta)$ is only an "area" in a metaphorical sense.
Note for instance that for the triangle $\Delta$ with vertices $\left\{
(0,0),(s,0),(0,t)\right\}  \subset\mathbb{CH}^{2},$ $s,t\in\mathbb{R}$ we have
$k_{23}=1$ and hence $A(x,y,z)=0.$ Alternatively, note that the $\ $two-form
$\omega$ vanishes on the real two-plane spanned by the vertices of $\Delta.$
Still, it is satisfying to phrase Brehm's theorem as saying the congruence
class of a triangle is determined by its side lengths and its area.

In fact, $A$ and variations on it have a much richer life than we have
discussed. One suggestion of this is that in complex projective space there is
a similar formula relating the argument of a product of kernel functions to
the invariant area of a triangle \cite[Sec 1.3.6]{Go}, Another similar
formula, using kernels of the Fock space, gives the area of Euclidean
triangles in the plane. Also, an invariant similar to $A$ can be defined using
the Bergman kernel function and hence has a natural definition on general
symmetric domains, and even more widely. The cocycle identity persists and, in
many cases, so does the fact that $A$ can be evaluated by integrating a
natural symplectic form over a triangle. All this suggests that $A$ might be a
valuable cohomological tool in studying symmetric domains and, more generally,
complex and symplectic manifolds. This is true, but we will not even begin
discussing details of these relations. More information as well as further
references are in \cite[Thm 4.8]{BS}, \cite[Section 5 ]{C},
\cite[Introduction]{BIW}, and \cite[Sections 1,2,3]{BI}.

\subsubsection{More About $A(x,y,z)$ and Area\label{projection}}

In the previous section we introduced $\mathcal{SA}(\Delta),$ a functional
related to area which gave a geometric interpretation to the invariant
$A_{123}.$ We now introduce another geometric functional, also related to
area, which turns out to equal $\left\vert \mathcal{SA}(\Delta)\right\vert $
and hence gives a slightly different geometric interpretation of $A_{123}.$

Suppose, again, $X=\left\{  x_{1},x_{2},x_{3}\right\}  \subset\mathbb{CH}^{n}$
and let $\Delta$ is the associated geodesic triangle. Set $H=DA_{n}(X).$

If $X$ is contained in a complex geodesic, $G,$ then there is a natural way to
define the area of $\Delta.$ Because $G$ is a complex geodesic there is a
hyperbolically isometric map of $\mathbb{CH}^{1}$ onto $G.$ By using the
geometry from $\mathbb{CH}^{1}$ there is then a natural interpretation of the
region of $G$ inside $\Delta.$ That map also can be used to carry the Poincare
Bergman area element to $G$ where it can be used to compute the area of
$\Delta,$ $\operatorname*{Area}(\Delta).$

If $X$ is not in a complex geodesic then we can push $X$ into a nearby complex
geodesic using a map $\Pi,$ and then use the functional $\operatorname*{Area}%
(\cdot)$ compute the area of of the triangle with vertices $\Pi X.$ More
precisely, the suppose $S$ is a side of $\Delta$. It is a geodesic segment and
hence is contained in a unique complex geodesic $G.$ Let $\Pi$ be the
hyperbolic nearest point projection of $\mathbb{CH}^{n}$ to $G,$ and let
$\Delta_{\Pi X}$ be the triangle in $G$ with geodesic sides and with vertices
$\Pi X$. We define the \textit{projected area} of $X$ to be
$\operatorname*{Area}(\Delta_{\Pi X}).$ It is a consequence of the next
theorem that the value of $\operatorname*{Area}(\Delta_{\Pi X})$ would be the
same if we did the similar construction using a one of the other sides of
$\Delta.$

If $X$ is in a complex geodesic $J$ then $G=J\ $and $\Pi$ is the identity on
$\Delta.$ In that case, combining this with the discussion in the previous
section we have the following chain of equalities: $\operatorname*{Area}%
(\Delta_{\Pi X})=\operatorname*{Area}(\Delta_{X})=\left\vert \mathcal{SA}%
(\Delta_{X})/2\right\vert =\left\vert A_{123}\right\vert .$ Although
$\operatorname*{Area}(\Delta_{X})$ is not defined for $X$ in general position,
the other three quantities are defined and, in fact, are equal. Thus, although
$\operatorname*{Area}(\Delta_{\Pi X})$ is not new numerical data$,$ it does
give an alternative geometric interpretation of $A_{123}$.

\begin{theorem}
For $X$ a three point set in $\mathbb{CH}^{n}$ and $\Delta_{X}$ the triangle
with $X$ as its vertex set, we have
\[
\operatorname*{Area}(\Delta_{\Pi X})=\left\vert \mathcal{SA}(\Delta
_{X})/2\right\vert =\left\vert A_{123}\right\vert .
\]

\end{theorem}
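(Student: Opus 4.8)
The theorem asserts a double equality. We already established in the previous section that $2A_{123} = \mathcal{SA}(\Delta_X)$ for any triangle in $\mathbb{CH}^n$, so the right equality $\lvert \mathcal{SA}(\Delta_X)/2\rvert = \lvert A_{123}\rvert$ is immediate. The content of the theorem is therefore the left equality: that the projected area $\operatorname{Area}(\Delta_{\Pi X})$ — the honest hyperbolic area of the triangle obtained by projecting $X$ onto the complex geodesic $G$ containing a chosen side — equals $\lvert A_{123}\rvert$.

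Let me think about how to prove this.

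**The structure of the argument.** The plan is to reduce the $n$-dimensional statement to a computation inside the complex geodesic $G$, which is isometric to $\mathbb{CH}^1 = \mathbb{D}$. Since $A_{123}$ is computed from the kernel entries $k_{ij} = -1/[x_j,x_i]$, and those entries are invariant under $\operatorname{Aut}(\mathbb{B}^n)$, I would first normalize. Using the normal-form apparatus from Theorem~\ref{reduction}, I can place the chosen side $S$ so that its two endpoints, say $x_1$ and $x_2$, lie in $G = \{(z,0,\dots,0)\}$, the model complex geodesic isometric to $\mathbb{D}$. Concretely, apply an automorphism so that $x_1 = (0,\dots,0)$ and $x_2 = (s,0,\dots,0)$ with $s = \delta_{12} > 0$; the third point becomes $x_3 = (w, t, 0, \dots, 0)$ after a further unitary in the orthogonal complement (as in the normal form $\Phi(X)$ of Theorem~\ref{n=3}).

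**Identifying the projection.** With $G$ the first coordinate axis, the hyperbolic nearest-point projection $\Pi$ onto $G$ sends $x_3 = (w,t,0,\dots)$ to $(w,0,\dots,0)$; the key geometric fact I would invoke is that for this model complex geodesic the hyperbolic nearest-point projection coincides with the Euclidean coordinate projection $(w,t,\dots)\mapsto (w,0,\dots)$ — exactly the fact already used in Section~\ref{geometry}. So the projected triangle $\Delta_{\Pi X}$ has vertices $0$, $s$, $w$ inside $\mathbb{D} = G$. Its area under the Poincaré–Bergman element is the ordinary hyperbolic area of the geodesic triangle $0, s, w$ in the disk. By the $n=1$ case established in the previous section, that area equals $2A'$, where $A' = \arg k'_{12}k'_{23}k'_{31}$ is the angular invariant computed from the $DA_1$ kernels of the \emph{projected} points $0,s,w$ in $\mathbb{D}$, i.e. $A' = -\arg(1 - s\bar w)$ (compare \eqref{geom 4}).

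**Matching the two angular invariants — the crux.** The remaining step, and the one I expect to be the real obstacle, is showing $A_{123} = A'$: that the angular invariant of the original three-dimensional configuration $\{0,(s,0),(w,t)\}$ equals the angular invariant of its planar projection $\{0,s,w\}$. Writing both in terms of $DA_n$ kernels, $A_{123} = \arg\bigl(k_{12}k_{23}k_{31}\bigr)$ with $k_{23} = (1 - (w,t,\dots)\cdot\overline{(s,0,\dots)})^{-1} = (1 - \bar s\, w)^{-1}$ — but crucially the second coordinate $t$ drops out of this inner product because $x_2$ has zero second coordinate, and $k_{12} = (1-\bar s\cdot 0)^{-1} \cdot$(origin factor)$=1$-type terms involve only first coordinates as well. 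So the key observation is that \emph{every} pairwise inner product $\langle x_i, x_j\rangle$ entering $k_{12}k_{23}k_{31}$ depends only on the first coordinates of the points, except possibly $\langle x_3, x_3\rangle$ which does not appear in the product $k_{12}k_{23}k_{31}$. I would verify this coordinatewise: since $x_1$ and $x_2$ already lie on the axis, $\langle x_1,x_2\rangle$, $\langle x_2,x_3\rangle$, $\langle x_3,x_1\rangle$ each pair at least one axis-supported vector against $x_3$, and the pairing annihilates the $t$-component. Hence $k_{12}k_{23}k_{31}$ is literally unchanged upon replacing $x_3 = (w,t)$ by $\Pi x_3 = (w,0)$, giving $A_{123} = A'$ exactly. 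Combining, $\operatorname{Area}(\Delta_{\Pi X}) = 2\lvert A'\rvert/2 = \lvert A_{123}\rvert = \lvert \mathcal{SA}(\Delta_X)/2\rvert$, with the absolute value absorbing the sign ambiguity of $\arg$. The independence of $\operatorname{Area}(\Delta_{\Pi X})$ from the choice of side, promised before the theorem, then follows from this identification with the side-independent quantity $\lvert A_{123}\rvert$.
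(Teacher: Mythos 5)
Your proposal is correct, and its skeleton matches the paper's: both reduce by an automorphism to the normal form $x_{1}=0,$ $x_{2}=(s,0,\dots,0),$ $x_{3}=(w,t,0,\dots,0),$ both use the fact that the hyperbolic nearest-point projection onto the model complex geodesic is the Euclidean coordinate projection, and both close with the Hangan--Masala identity $\mathcal{SA}(\Delta)=2A_{123}.$ Where you genuinely diverge is at the crux. The paper handles the remaining step --- that the hyperbolic area of the projected triangle recovers $\left\vert \mathcal{SA}(\Delta)\right\vert$ --- by citing the proof of Theorem 7.1.11 in \cite{Go}. You instead prove it directly: you observe that each of the three inner products entering $k_{12}k_{23}k_{31}$ pairs $x_{3}$ against a vector supported on the first coordinate axis, so the transverse component $t$ is annihilated and the angular invariant of $\left\{ 0,s,w\right\} \subset\mathbb{D}$ literally equals $A_{123}$; the one-dimensional identity $\operatorname*{Area}=2A$ from the preceding section then finishes the computation. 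This is a nice trade: your argument is self-contained modulo facts already stated in the paper and makes transparent \emph{why} the projection preserves the invariant, whereas the citation to \cite{Go} keeps the exposition short and defers the computation to a standard reference. One small caution: your final line divides by $2$ somewhat silently ($2\left\vert A^{\prime}\right\vert /2=\left\vert A_{123}\right\vert$), and indeed the paper itself is not internally consistent here --- its item (3) asserts the projected area is $\left\vert \mathcal{SA}(\Delta)\right\vert$ while the theorem statement asserts $\left\vert \mathcal{SA}(\Delta)/2\right\vert$ --- so the factor of $2$ traces back to the curvature normalization of the Poincar\'{e}--Bergman area element and deserves to be pinned down explicitly rather than absorbed.
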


\begin{proof}
The theorem is a consequence of the following:

\begin{enumerate}
\item The construction of $\Delta_{\Pi X}$, and hence also the final
statement, are invariant under automorphisms of $\mathbb{CH}^{n}.$

\item If the complex geodesic $G$ equals $\mathbb{D}$, the intersection of
$\mathbb{B}^{n}$ with the $z_{1}$ axis; then the nearest point projection of
$\mathbb{B}^{n}$ onto $G\mathbb{\ }$is the same as the Euclidean orthogonal
projection of $\mathbb{B}^{n}$ onto $\mathbb{D}$.

\item If $X\in\mathcal{N}$ then the hyperbolic area of the triangle in
$\mathbb{D}$ with vertices given by the Euclidean projection of the set $X$
into $\mathbb{D}$ is $|\mathcal{SA}(\Delta)|.$

\item For any $\Delta,$ $\mathcal{SA}(\Delta)=2A_{123}.$
\end{enumerate}

The first statement holds by inspection of the definitions. The second is an
elementary exercise after expressing the pseudohyperbolic distance between
points in terms of Euclidean coordinates in $\mathbb{B}^{n}.$ The third
statement is proved in \cite{Go} as part of the proof of Theorem 7.1.11. As
mentioned in the previous section, the final statement is a result of Hangan
and Masala \cite{HM} and also has alternative proofs as described in \cite{C}.
\end{proof}

\subsection{$\dim(H)>3$ and the CPP\label{dim>3}}

Theorem \ref{embed} stated that the finite dimensional reproducing kernel
Hilbert spaces with the CPP are exactly the rescalings of spaces $DA_{n}(X).$
We now prove part of that theorem, namely:

\begin{theorem}
[{\cite[Thm. 7.28]{AM}}]\label{one side}If $H^{+}$ is a finite dimensional
RKHS with the CPP then there is a finite set $X^{+}$ in some $\mathbb{CH}^{n}$
such that $H^{+}\sim DA_{n}(X^{+}).$
\end{theorem}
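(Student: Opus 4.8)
The plan is to induct on $n=\dim(H^{+})$. The cases $n\le 3$ are already in hand: $n\le 2$ by the discussion of Section \ref{two}, and $n=3$ by Theorem \ref{n=3}. For the inductive step, assume the result for all CPP spaces of dimension $<n$ and let $H^{+}$ be $n$-dimensional with $X(H^{+})=\{x_i\}_{i=1}^{n}$. Let $H$ be the $(n-1)$-dimensional subspace spanned by $\{k_i\}_{i=1}^{n-1}$. First I would check that $H$ again has the CPP. By Theorem \ref{MQ theorem}, $H^{+}$ has the CPP exactly when all the matrices $MQ_r(H^{+})$ of (\ref{MQ}) are positive semidefinite, and for each $r\le n-1$ the matrix $MQ_r(H)$ is precisely the principal submatrix of $MQ_r(H^{+})$ obtained by deleting the row and column indexed by $n$, since the two share the same Gram entries. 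Principal submatrices of positive semidefinite matrices are positive semidefinite, so every $MQ_r(H)\ge 0$ and $H$ inherits the CPP. By the induction hypothesis together with Corollary \ref{small space} and passage to normal form, I may then fix $X=\{x_i\}_{i=1}^{n-1}\subset\mathbb{CH}^{n-2}$ in normal form with $x_1$ at the origin and $H\sim DA_{n-2}(X)$; after basepoint rescaling $k_{1j}=1$ for all $j$.

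The heart of the step is to adjoin one final point $x_n$ so that $H^{+}\sim DA_{n-1}(X\cup\{x_n\})$, following the footprint-plus-height scheme of Section \ref{geometry}. Writing $S=\operatorname{span}\{x_2,\ldots,x_{n-1}\}$, the relations $k_{jn}=(1-(x_n,x_j))^{-1}$ for $2\le j\le n-1$ prescribe the inner products $(x_n,x_j)=1-1/k_{jn}$, and these determine a unique vector $P_S(x_n)\in S$ whose coordinates become the first coordinates of the sought point. The remaining coordinate is a height $d\ge 0$ above $S$, forced by $|x_n|^2=\delta_{1n}^2$ (because $\rho(0,x_n)=|x_n|=\delta_{1n}$), that is $d^2=\delta_{1n}^2-|P_S(x_n)|^2$. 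Granting $d^2\ge 0$, the resulting point satisfies $|x_n|^2=\delta_{1n}^2<1$, so it lies in the ball, and a direct check shows the Gram matrix of $DA_{n-1}(X\cup\{x_n\})$ agrees entrywise with that of $H^{+}$, completing the induction.

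The hard part is the nonnegativity of the height,
\begin{equation}
|P_S(x_n)|^2\le\delta_{1n}^2,\label{norm}
\end{equation}
the higher-dimensional analogue of $LF_{123}^2\le\delta_{13}^2$ from Theorem \ref{n=3}, and it is exactly here that the complete, rather than merely scalar, Pick property is used. After basepoint rescaling the $(i,j)$ entry of $MQ_1(H^{+})$ equals $1-1/k_{ij}=(x_j,x_i)$, so $MQ_1(H^{+})$ is the Gram matrix of $\{x_2,\ldots,x_n\}$, with last diagonal entry $\delta_{1n}^2$. The quantity $d^2=\delta_{1n}^2-|P_S(x_n)|^2$ is then the generalized Schur complement of the leading block $\mathcal{G}=((x_j,x_i))_{2\le i,j\le n-1}$ inside $MQ_1(H^{+})$; when $x_2,\ldots,x_{n-1}$ are linearly independent this is the determinant ratio $\det(MQ_1(H^{+}))/\det\mathcal{G}$. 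Because Theorem \ref{MQ theorem} guarantees $MQ_1(H^{+})\ge 0$, the border column lies in the range of $\mathcal{G}$ (so $P_S(x_n)$ is well defined) and the Schur complement is nonnegative, which is (\ref{norm}). The one point demanding care is the possible linear dependence among $x_2,\ldots,x_{n-1}$, where $\det\mathcal{G}=0$; there I would either work with the generalized Schur complement via the pseudoinverse of $\mathcal{G}$, or pass to a maximal independent subset spanning $S$ — which the normal-form construction exhibits directly — so that the computation applies without a vanishing denominator.
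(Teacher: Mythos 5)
Your proposal is correct and follows essentially the same route as the paper: induction on dimension with Theorem \ref{n=3} as the base case, a footprint-plus-height placement of the new point after normal-form/basepoint reduction, and the key length estimate obtained as nonnegativity of the Schur complement of the leading block of $MQ_{1}(H^{+})$, whose positive semidefiniteness is exactly Theorem \ref{MQ theorem}. The only cosmetic differences are that you justify the inheritance of the CPP by the subspace via principal submatrices of the $MQ_{r}$ (which the paper asserts without comment) and you phrase the degenerate case via the generalized Schur complement/pseudoinverse, whereas the paper passes to a maximal linearly independent subset to keep the strict-positivity form of Lemma \ref{se}.
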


This is a well established result. Our goal here is to showcase a geometric
argument similar to what we just used for $\dim(H)=3.$ With that in mind, we
will be less than fully detailed.

\begin{proof}
Theorem \ref{n=3} proves the result in the case $\dim(H^{+})=3.$ With that as
a starting point we prove the theorem by induction on the dimension of $H. $
Thus we need to know that we can extend the definition of the function $\Phi$
from a set $X$ to a larger set $X^{+}$ so that certain conditions are met.
Here is the precise formulation.

Suppose $H^{+}$ is an $n+1$ dimensional RKHS with the CPP, with $X^{+}%
=X(H^{+})=\left\{  x_{i}\right\}  _{i=1}^{n+1}$, with kernel functions
$\left\{  k_{i}\right\}  _{i=1}^{n+1}$ and with Gram matrix $K^{+}=\left(
k_{ij}\right)  .$ Let $H\ $ be the subspace spanned by $\left\{
k_{r}\right\}  _{r=1}^{n}$, and hence $X=X(H)=\left\{  x_{i}\right\}
_{j=1}^{n}.$ The subspace $H$ inherits the CPP and hence, by our induction
hypothesis, and taking note of Corollary \ref{small space}, there is a map
$\Phi:$ $X$ $\rightarrow\mathbb{CH}^{n}$ so that $H\sim DA_{n}(\Phi(X)).$ We
can suppose $\Phi(X)$ is in normal form in which case $\Phi(x_{1})=z_{1}=0$
and $\Phi(X)$ is contained in the subspace of $\mathbb{C}^{n-1}$ of
$\mathbb{C}^{n}$ characterized by having the last coordinate equal to zero. We
write $\Phi(x_{i})=z_{i},$ $i=1,...,n.$ Hence we have the following formula
for \underline{some} of the entries of the $(n+1)\times(n+1)$ matrix $K^{+};$
for $1\leq i,j\leq n$%
\begin{equation}
k_{ij}=(1-\overline{z_{i}}z_{j})^{-1}. \label{gram entries}%
\end{equation}
We want to find $z_{n+1}\in\mathbb{CH}^{n}$ with the property that if we
extend $\Phi$ to a map $\Phi^{+}$ defined on all of $X^{+}$ by setting
$\Phi^{+}(x_{n+1})=z_{n+1}$\ then we will have $H^{+}\sim DA_{n}(\Phi
^{+}(X^{+})).$

We write the candidate for $z_{n+1}$ as
\begin{equation}
z_{n+1}=w+ce_{n}=(c_{1},...,c_{n-1},0)+c(0,...,0,1) \label{form}%
\end{equation}
We will be finished if we construct $z_{n+1}$ so that (\ref{gram entries})
holds for the full Gram matrix $K^{+}.$ This involves conditions on the inner
products, $\left\langle z_{j},z_{n+1}\right\rangle $ for $1\leq j\leq n+1,$
but our construction insures that for $j<n+1$ the last coordinate of $z_{j}$
is $0$ and hence $\left\langle z_{j},z_{n+1}\right\rangle =\left\langle
z_{j},w\right\rangle .$ Thus we can write all of those conditions as
requirements for $w:$
\begin{equation}
\left\langle w,z_{j}\right\rangle =1-1/k_{n+1,j}\text{ \ }j=1,...,n
\label{inner product}%
\end{equation}

We now suppose temporarily that $H$ is \textit{generic, }that is, the set
$Z=\left\{  z_{i}\right\}  _{i=2}^{n}$ is linearly independent. We consider
the other case later. The set $Z$ is a basis of $\mathbb{C}^{n-1}.$ Let
$Z^{\ast}=\left\{  z_{i}^{\ast}\right\}  _{i=2}^{n}$ be the dual basis. We
set
\[
w=\sum\nolimits_{i=2}^{n}\left\langle w,z_{i}\right\rangle z_{i}^{\ast}%
=\sum\nolimits_{i=2}^{n}\left(  1-1/k_{ni}\right)  z_{i}^{\ast},
\]
The first equality holds for any $w\in\mathbb{B}^{n-1},$ the second insures
the that (\ref{inner product}) holds.

We also want $z_{n+1}$ given by (\ref{form}) to satisfy $\left\vert
z_{n+1}\right\vert ^{2}=1-1/k_{n+1,n+1}.$ We have specified $w$ and from
(\ref{form}) we know $\left\vert w\right\vert ^{2}+\left\vert c\right\vert
^{2}=\left\vert z_{n+1}\right\vert ^{2}.$ Hence, to insure we can find the
required $c$ we need to show that $w$ satisfies
\begin{equation}
\left\vert w\right\vert ^{2}\leq1-1/k_{n+1,n+1}, \label{norm estimate}%
\end{equation}

This requirement; that the length of the projection of the as yet undiscovered
target vector $z_{n+1}$ onto the linear span of the points already identified
is, in fact, less than that the desired length of $z_{n+1},$ is the higher
dimensional analog of the statement $LF_{123}^{2}\leq\delta_{13}^{2}$ in
Theorem \ref{n=3}.

We now use the CPP hypothesis to obtain the length estimate. There is no loss
in passing to a rescaling of $H^{+},$ and hence we suppose $k_{1}$ is
identically one. In that case the McCullough-Quiggen matrix of (\ref{MQ}),
with subscript $1,$ is%
\begin{equation}
MQ^{+}=MQ_{1}(H^{+})=\left(  1-\frac{1}{k_{rs}}\right)  _{2\leq r,s\leq n+1}.
\end{equation}
The fundamental fact about this matrix is the following.
\end{proof}

\begin{theorem}
[{McCullough-Quiggen \cite[Thm. 7.6]{AM}}]\label{MQ theorem}If $H^{+}$ has the
CPP then the matrix $MQ^{+}$ is positive semidefinite.
\end{theorem}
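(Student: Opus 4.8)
The plan is to recognize that, with the normalization $k_1\equiv 1$ already in force, proving that $MQ^{+}=(1-1/k_{rs})_{2\le r,s\le n+1}$ is positive semidefinite is equivalent to producing a Szeg\H{o}-type factorization of the kernel: an auxiliary Hilbert space $\mathcal L$ and vectors $b_{r}\in\mathcal L$ with $b_{1}=0$ and $1-1/k_{rs}=\langle b_{s},b_{r}\rangle_{\mathcal L}$, equivalently $k_{rs}=(1-\langle b_{s},b_{r}\rangle)^{-1}$. Once such a factorization is in hand the matrix $MQ^{+}=(\langle b_{s},b_{r}\rangle)_{r,s\ge 2}$ is a Gram matrix and hence automatically positive semidefinite, so the real content is the existence of the $b_{r}$. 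This is precisely the linear-algebra core of Theorem \ref{embed}: the $b_{r}$ are the points of the set $X^{+}\subset\mathbb{CH}^{n}$ one is trying to build, and the length estimate (\ref{norm estimate}) used in the induction of Theorem \ref{one side} is nothing but the Cholesky factorization of $MQ^{+}$ carried out one column at a time.

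I would stress at the outset that scalar data cannot suffice. As the three-dimensional Theorem \ref{n=3} shows, the scalar Pick property controls only the single $2\times2$ blocks of $MQ^{+}$ — there it is equivalent to the one inequality $LF_{123}^{2}\le\delta_{13}^{2}$, obtained by evaluating the extremal multiplier (\ref{multiplier formula}) at the third point and applying Proposition \ref{same delta}. For $n\ge 3$ the joint positivity of the full matrix is strictly stronger, and it is here that the matricial, rather than scalar, Pick property must enter. I would also note that the cheap, always-valid necessity of the Pick condition is useless by itself: for any contractive multiplier $M_{\Phi}$ one has $\big[(I-\Phi(x_{r})\Phi(x_{s})^{*})k_{rs}\big]\succeq0$, but this carries $k_{rs}$ as a factor, whereas $MQ^{+}$ involves $1/k_{rs}$; passing between the two requires an inversion that only the \emph{sufficiency} half of the complete Pick property provides.

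To produce the factorization I would argue by contradiction via a Hahn--Banach separation, which is the heart of the McCullough--Quiggen argument. The complete Pick property says positivity of the matricial Pick matrix $\big[(I-W_{r}W_{s}^{*})k_{rs}\big]$ is \emph{sufficient} for solvability by a contractive multiplier interpolating the targets $W_{r}$. This sufficiency is equivalent, by a duality between the cone of Pick matrices of solvable problems and a cone of positive matrices of the relevant form, to the factorization $1-1/k_{rs}=\langle b_{s},b_{r}\rangle$, the nonnegative combinations witnessing membership in the dual cone being exactly the Gram decomposition producing the $b_{r}$. Thus a failure of positivity, $v^{*}MQ^{+}v<0$ for some $v$, would separate $MQ^{+}$ from that cone and, unwinding the duality, manufacture interpolation data with positive semidefinite Pick matrix but no contractive interpolant, contradicting the complete Pick property.

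The main obstacle is exactly this separation step — converting the abstract solvability hypothesis into the concrete factorization. The delicate points are identifying the correct closed convex cones, so that the separating functional is genuinely represented by an admissible contractive multiplier, and checking that the data extracted from $v$ simultaneously has a positive semidefinite Pick matrix and admits no contractive solution. This is the genuinely functional-analytic core of the result, and is what is being invoked when the theorem is cited from \cite[Thm.\ 7.6]{AM}; it is precisely what cannot be reduced to the elementary pairwise computation with (\ref{multiplier formula}) and Proposition \ref{same delta} that settled the three-dimensional case.
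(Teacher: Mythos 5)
Your proposal does not actually prove the statement; it describes, accurately, where the proof would have to go and then stops at the hard part. The reduction you make at the outset is correct and worth having: with the basepoint normalization $k_{1}\equiv 1$ in force, positive semidefiniteness of $MQ^{+}=(1-1/k_{rs})$ is exactly the assertion that $1-1/k_{rs}=\langle b_{s},b_{r}\rangle$ for vectors $b_{r}$ in some auxiliary Hilbert space, and your observation that the trivially necessary Pick condition $\bigl[(I-W_{r}W_{s}^{*})k_{rs}\bigr]\succeq 0$ carries $k_{rs}$ rather than $1/k_{rs}$, so that no cheap argument applies, is a genuinely useful diagnostic. But the entire content of the theorem is the step you label "the main obstacle": you never define the two cones to be separated, never verify the closedness and nonemptiness of interior that a Hahn--Banach separation requires, and never show how a separating functional obtained from a vector $v$ with $v^{*}MQ^{+}v<0$ is unwound into concrete matricial interpolation data whose Pick matrix is positive semidefinite yet which admits no contractive interpolant. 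Without those three steps the contradiction is asserted, not derived, so what you have is a correct road map rather than a proof.

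For comparison: the paper does not prove this theorem either. It is stated as an imported result, cited from \cite[Thm.\ 7.6]{AM}, and used as a black box inside the induction of Theorem \ref{one side} (where its conclusion feeds the Schur-complement estimate of Lemma \ref{se}). So your proposal is not wrong in approach --- it is a faithful summary of the McCullough--Quiggin strategy, and you correctly identify that the scalar computation with (\ref{multiplier formula}) and Proposition \ref{same delta}, which sufficed in the three-dimensional case of Theorem \ref{n=3}, cannot be bootstrapped to $n\geq 3$. If your intent was to cite the result, as the paper does, you should say so explicitly; if your intent was to prove it, the cone-duality argument must be carried out in full, and that is a substantial piece of work that your write-up defers rather than supplies.
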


\begin{proof}
We assumed $H^{+}$ is generic so we would know that the vectors of $Z$ are
linearly independent, which is equivalent to $MQ$ being strictly positive
definite. We write $MQ^{+}$ in block form as%
\begin{equation}
MQ^{+}=%
\begin{pmatrix}
MQ & v^{\ast}\\
v & c
\end{pmatrix}
. \label{block}%
\end{equation}
Here $MQ$ is the sub-matrix of $MQ^{+}$ obtained by deleting the last row and
last column,%
\[
MQ=\left(  \left(  1-\frac{1}{k_{rs}}\right)  _{2\leq i,s\leq n}\right)
\]
The other terms are what are needed to complete $MQ^{+}$; $v=\left(  \left(
1-\frac{1}{k_{ns}}\right)  _{2\leq s\leq n}\right)  ,$ $v^{\ast}$ is the
conjugate transpose of $v,$ and $c$ is the scalar $1-k_{n+1,n+1}^{-1}.$ A
computational lemma now lets us recast the hypothesis on $MQ^{+}$ in the form
we will use.

\begin{lemma}
\label{se}$MQ^{+}$ is positive definite if and only if $MQ$ is positive
definite and
\begin{equation}
vMQ^{-1}v^{\ast}<c \label{schur estimate}%
\end{equation}

\end{lemma}
\end{proof}

\begin{proof}
[Proof of Lemma]This is the characterization of the positivity of $MQ^{+}$ in
terms of the Schur complement of $MQ$ \cite[pg. 86]{AM}$\mathbb{.}$

Our desired estimate for $\left\vert w\right\vert $ is given by
(\ref{norm estimate}). We want%
\begin{equation}
\left\vert w\right\vert ^{2}=\sum_{i,j=1}^{n}\left(  1-1/k_{in}\right)
\left\langle z_{i}^{\ast},z_{j}^{\ast}\right\rangle \overline{\left(
1-1/k_{jn}\right)  }\leq1-1/k_{n+1,n+1}. \label{norm}%
\end{equation}
Defining the matrix $J$ by $J=\left(  \left\langle z_{i}^{\ast},z_{j}^{\ast
}\right\rangle \right)  _{1,j=2}^{n}$ and recalling the notation used in
(\ref{block}) we can rewrite the desired inequality as $vJv^{\ast}<c.$ If we
show $J=MQ^{-1}$ then, appealing to Lemma \ref{se}, we will be finished. That
matrix equality is the standard relation between the Gram matrix of a basis
and the Gram matrix of its dual basis. We saw it earlier in (\ref{later})
where $K^{\#}$ had the role of $J.$

Suppose now that $H^{+}$ is not generic. Renumber the points in $Z=\left\{
z_{i}\right\}  _{i=2}^{n+1}$ so that $W=\left\{  z_{i}\right\}  _{i=2}^{s}$ is
a maximal linearly independent set. Form a basis $V$ of $\mathbb{C}^{n-1}$ by
adding $n-s$ vectors to $W,$ each orthogonal to the vectors in $Z.$ We
temporary abuse notation and let $Z^{\ast}=\left\{  z_{i}^{\ast}\right\}
_{i=1}^{n}$ be the basis dual to $V.$ Now we set
\[
w=\sum\limits_{i=2}^{s}\left\langle w,z_{i}\right\rangle z_{i}^{\ast}%
=\sum\limits_{i=2}^{s}\left(  1-1/k_{in}\right)  z_{i}^{\ast},
\]
and note that the upper index of summation is $s<n.$ It is immediate from the
definitions that (\ref{inner product}) holds for $1\leq j\leq s.$ For the
remaining values of $j$ note that $z_{j}$ is a linear combination of $z_{i}$
with $i\leq s$ and that $\left(  1-1/k_{jn}\right)  $ is the same linear
combination of $\left(  1-1/k_{in}\right)  .$ Combining these two facts
insures that (\ref{inner product}) holds for the full range of $j.$

One reason we are considering the non-generic situation separately is to avoid
having to work with the more complicated analog of Lemma \ref{se} that holds
when $MQ^{+}$ is semidefinite. In this case we want the estimate%
\begin{equation}
\left\vert w\right\vert ^{2}=\sum_{i,j=2}^{s}\left(  1-1/k_{in}\right)
\left\langle z_{i}^{\ast},z_{j}^{\ast}\right\rangle \overline{\left(
1-1/k_{jn}\right)  }\leq1-1/k_{n+1,n+1}.
\end{equation}
where now we have a different upper limit of summation. In this case we start
from the matrix $MQ^{+},$ which is positive semidefinite, and remove rows and
columns $s+1,...,n.$ The resulting matrix $MQ^{-}$ is still positive
semidefinite and furthermore its upper left block obtained by deleting the
last row and last column is strictly positive definite. This last property by
the fact that the vectors in $W$ are linearly independent. Given the strict
positivity of that block we have the analog of $vK^{-1}v^{\ast}<c$ and can
finish the argument as before.
\end{proof}

This result includes a statement about the Gram matrix of a four dimensional
$H$ which insures there is an embedding $\Phi(X),$ but the geometric content
of the result is elusive. Consider the following specific question. Given a
four point set $X$ in $\mathbb{CH}^{3},$ we can think of $X$ as the vertices
of a tetrahedron, That configuration is described by nine parameters; the six
distances between pairs of points together with the angular invariants
associated with any three of the triangular faces, the fourth angular
invariant being determined by the cocycle identity (\ref{cocycle}). The
inverse question is this; given four triangles in $\mathbb{CH}^{3}$, can they
be assembled as the faces of a tetrahedron? That is, is there a tetrahedron in
$\mathbb{CH}^{3}$ whose four triangular faces are congruent to the four given
triangles? \ The triangles must satisfy the obvious necessary conditions; side
lengths must match and the cocycle identity for the faces must hold. A
configuration which meets these conditions is described by the same nine
parameters used to describe a tetrahedron. However that is not the full story.
An example due to Quiggen \cite[Pg. 94]{AM} shows that there must be
additional conditions. He gives a four dimensional RKHS, $H$ such that each of
the four natural three dimensional RKHS subspaces, $\left\{  H_{i}\right\}
_{i=1}^{4}$ has the CPP, but $H$ does not. The fact that each $H_{i}$ has the
CPP insures that for each $i$ we can find a triple $X_{i}$ in $\mathbb{CH}%
^{n}$ with $H_{i}\sim DA_{n}(X_{i}).$ The kernel functions for $H_{i}$ are
kernel functions from $H $ and hence the side lengths of the triangle $X_{i}$
and its angular invariant are the same as would be computed from the Gram
matrix of $H.$ This insures that these four triangles satisfy the matching
side length conditions and also that the $A$'s satisfy the cocycle condition.
However if the triangles could be assembled into a tetrahedron with vertices
$X$ then we would have $H\sim DA_{n}(X)$ and hence $H$ would have the CPP; but
it does not. The obstruction to there being such an $X$ must be that the
inequality (\ref{schur estimate}) fails; but the geometry associated with that
failure is not clear.

In the proof of Proposition \ref{is r-model} we will see an example of how,
under restrictive assumptions, locally coherent information about three
dimensional subspaces can be spliced together to completely describe a larger space.

\section{The Geometry of Sets\label{geometry of X}}

\subsection{Three Point Sets}

We now look at three dimensional spaces $H\sim DA_{n}(X)$ and the relation
between the analytic and algebraic properties of $H\ $and the geometric
properties of $X.$ We focus on the complex hyperbolic analogs of the Euclidean
statements that a set of points is colinear or coplanar. Taking note of the
comments in Section \ref{larger spaces}, some of the results apply
\textit{mutatis mutandis }to subspaces of larger spaces and subsets of larger
sets. We consider additional results for larger spaces and sets in the next section.

We are interested in properties of $H$ that are unchanged by rescaling, and
properties of $X$ that are unchanged by automorphisms. With that in mind we
focus on $H=DA_{2}(X)$\ with $X$ a three point set in normal form. We denote
the collection of all such sets by $\mathcal{N}$.%

\begin{equation}
\mathcal{N=}\left\{  X=\left\{  x_{1},x_{2},x_{3}\right\}  =\left\{
(0,0),(s,0),(w,t),\text{ }s,t>0,w\in\mathbb{C}\right\}  \subset\mathbb{CH}%
^{2}\text{.}\right\}  \label{form of x}%
\end{equation}

\noindent We will be particularly interested the certain subsets of
$\mathcal{N}$;%

\begin{equation}%
\begin{array}
[c]{cc}%
\mathcal{A}=\left\{  X\in\mathcal{N}:\text{ }s>0,\text{ }w\in\mathbb{R}\text{,
}t=0\right\}  \text{ \ } & \text{in a geodesic curve
\ \ \ \ \ \ \ \ \ \ \ \ \ \ \ \ }\\
\mathcal{B}=\left\{  X\in\mathcal{N}:\text{ }s>0,\text{ }w\in\mathbb{R}\text{,
}t>0\right\}  \text{\ } & \text{in a real geodesic disk
\ \ \ \ \ \ \ \ \ \ \ }\\
\mathcal{C}=\left\{  X\in\mathcal{N}:\text{ }s>0,\text{ }w=0\text{,
}t>0\right\}  \text{ } & .....\text{with a right angle at }x_{1}=0\\
\mathcal{D}=\left\{  X\in\mathcal{N}:\text{ }s>0,\text{ }w=s\text{,
}t>0\right\}  \text{ } & .....\text{with a right angle at }x_{2}\text{
\ \ \ \ }\\
\mathcal{E}=\left\{  X\in\mathcal{N}:\text{ }s>0,\text{ }w\in\mathbb{C}\text{,
}t=0\right\}  \text{ } & \text{in a complex geodesic \ \ \ \ \ \ \ \ \ \ \ }%
\end{array}
\label{types}%
\end{equation}

\subsubsection{Points on a Geodesic}

For $X$ in $\mathcal{N}$, $x_{1}=0$. Hence any hyperbolic geodesic segment
containing $x_{1}$ sits in a Euclidean line through the origin. That
observation will sometimes let us use Euclidean coordinate geometry rather
than hyperbolic incidence geometry,

Any two points in $\mathbb{CH}^{2}$ determine a unique geodesic segment. If a
three point set $Y$ which lies on a geodesic is put in normal form then one
point will be at the origin and hence the geodesic will lie in a real line
through the origin. That is, if $Y\sim X\in\mathcal{N}$ then the points of $Y$
are on a single geodesic exactly if $X\in\mathcal{A}$. One invariant
characterization of that configuration is that the points produce equality in
the triangle inequality for the hyperbolic metric (and hence the "triangle" is
degenerate); equivalently, equality holds in the strong triangle inequality
for the pseudohyperbolic metric. In either case one can tell which point is
between the others by noting which of the two possible equalities holds.

Alternative characterizations can also be given; for instance a geodesic
segment lies on the intersection of a complex geodesic and a real geodesic
disk. Combining that with the results below characterizing those geometric
conditions we have the following.

\begin{proposition}
\label{on geodesic}Suppose $X$ is a three point set in $\mathbb{CH}^{n}$ and
$H=DA_{n}(X).$ The points of $X$ lie on geodesic if and only if $LF_{123}%
=\delta_{13}\ $and $A_{123}=0$.
\end{proposition}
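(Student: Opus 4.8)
The plan is to reduce everything to the normal form of Section~\ref{geometry of X} and then recognize the two hypotheses as, respectively, ``$X$ lies in a complex geodesic'' and ``the triangle on $X$ has zero area.'' Since $LF_{123}$, $\delta_{13}$, $A_{123}$, and the property of lying on a geodesic are all invariant under the automorphisms of $\mathbb{CH}^{n}$, I would first use the normal-form construction of Section~\ref{normal} together with Theorem~\ref{reduction} to assume $X=\{(0,0),(s,0),(w,t)\}\in\mathcal{N}\subset\mathbb{CH}^{2}$ with $s,t\geq 0$, $w\in\mathbb{C}$, and $H=DA_{2}(X)$. As computed in the proof of Theorem~\ref{n=3}, this placement gives $LF_{123}=|w|$, while (\ref{geom 2}) gives $\delta_{13}^{2}=|w|^{2}+t^{2}$.

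First I would dispose of the condition $LF_{123}=\delta_{13}$. From $LF_{123}^{2}=|w|^{2}$ and $\delta_{13}^{2}=|w|^{2}+t^{2}$ it is immediate that $LF_{123}=\delta_{13}$ is equivalent to $t=0$, i.e. to $X\in\mathcal{E}$; and by Corollary~\ref{disk} (the equivalence of the items $4'$ and $6'$ there) this is precisely the statement that $X$ lies in a complex geodesic $G$, a totally geodesic isometric copy of $\mathbb{CH}^{1}=\mathbb{D}$.

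Next I would interpret $A_{123}$ inside that disk. Once $X\subset G$, I can transport to $\mathbb{CH}^{1}$ and apply the $n=1$ identity of Section~\ref{about A}, namely $\operatorname{Area}(\Delta)=2A_{123}$, where $\Delta$ is the geodesic triangle on $X$ and the area is the Poincar\'e--Bergman area. Thus $A_{123}=0$ says exactly that $\Delta$ has zero area, and the final link is the standard hyperbolic-plane fact that a geodesic triangle has zero area if and only if it is degenerate, i.e. its three vertices are collinear. Assembling the two directions: if the points lie on a geodesic, then since every geodesic segment is contained in a unique complex geodesic (Section~\ref{projection}) we get $X\subset G$, hence $LF_{123}=\delta_{13}$, and inside $G$ the collinear points give a degenerate triangle, so $A_{123}=\tfrac{1}{2}\operatorname{Area}(\Delta)=0$; conversely, $LF_{123}=\delta_{13}$ places $X$ in $G$, and then $A_{123}=0$ forces zero area, hence collinearity.

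The step I expect to be the crux is the forward implication's first move---that a set lying on a geodesic must lie in a complex geodesic---together with the necessity of using \emph{both} hypotheses. The area-zero condition $A_{123}=0$ alone does not give collinearity: the totally real configuration $\{(0,0),(s,0),(0,t)\}$ of Section~\ref{about A} has $A_{123}=0$ yet is not on a geodesic. The role of $LF_{123}=\delta_{13}$ is exactly to exclude such totally real, non-complex-geodesic configurations \emph{before} the area interpretation of $A_{123}$ is applied, so care is needed to keep the two conditions logically distinct rather than conflating ``zero symplectic area'' with ``degenerate.''
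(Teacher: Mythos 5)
Your argument is correct, but it routes the second hypothesis differently from the paper. The paper's (implicit) proof decomposes ``lying on a geodesic'' as ``lying in a complex geodesic \emph{and} lying in a real geodesic disk,'' and then quotes Proposition \ref{in complex geodesic} for the first condition ($LF_{123}=\delta_{13}$) and Proposition \ref{rgd} for the second ($A_{123}=0$); in normal form this is the purely coordinate observation that $t=0$ and $w\in\mathbb{R}$ together put the three points $0,s,w$ on the real diameter. You handle the $LF_{123}=\delta_{13}$ condition the same way (via $LF_{123}=|w|$, $\delta_{13}^{2}=|w|^{2}+t^{2}$, and Corollary \ref{disk}), but you then interpret $A_{123}=0$ \emph{after} restricting to the complex geodesic, using the $n=1$ identity $\operatorname{Area}(\Delta)=2A_{123}$ of Section \ref{about A} and the Gauss--Bonnet fact that a hyperbolic triangle has zero area exactly when it is degenerate. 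Both are valid; the paper's route is more elementary, since in normal form with $t=0$ one has $A_{123}=-\arg(1-s\bar{w})$, so $A_{123}=0$ is just the statement that $w$ is real and no area theorem is needed, whereas your route leans on the (nontrivial) Cartan/Hangan--Masala area identity but makes the geometric meaning of $A_{123}$ do the work and makes transparent why \emph{both} hypotheses are needed --- a point you rightly flag with the totally real right-angle configuration, which is exactly the content of Proposition \ref{rgd} (that $A_{123}=0$ alone only yields a real geodesic disk). One small thing to keep explicit: when you ``transport to $\mathbb{CH}^{1}$'' you are using that $A_{123}$ is invariant under automorphisms and that the $DA_{n}$ kernel restricted to the slice $L$ is the $DA_{1}$ kernel, so the intrinsic disk invariant agrees with the ambient one; this is harmless but worth a sentence.
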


\subsubsection{Points in a Real Geodesic Disk\label{real geodesic disk}}

The totally geodesic submanifolds of $\mathbb{CH}^{n}$ of real dimension one
are the geodesics, we just considered those. We now look at the two types of
totally geodesic submanifolds of real dimension two introduced in Section
\ref{complex hyperbolic space}. We begin with the real geodesic disks.

Any real geodesic disk is equivalent under an automorphism to the intersection
of the ball with the \textit{real }linear span of $(1,0,...,0)$ and
$(0,1,0,...,0)$. Hence a general three point set $Y$ is in a real geodesic
disk if and only if it is congruent to a set in $\mathcal{B}$. If
$X\in\mathcal{N}$ this is equivalent to having $k_{23}\in\mathbb{R}$. An
equivalent invariant statement is the following.

\begin{proposition}
\label{rgd}The set $Y$ lies in a real geodesic disk if and only if $Y\sim X$
for some $X\in\mathcal{B}$, if and only if $A_{123}=0.$
\end{proposition}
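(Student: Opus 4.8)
The plan is to reduce everything to the normal form and then read off all three conditions from a single computation of the angular invariant. Since congruent sets share the same (unique) normal form, I would first replace $Y$ by its normal form $X=\{(0,0),(s,0),(w,t)\}\in\mathcal{N}$; under this reduction the condition ``$Y\sim X'$ for some $X'\in\mathcal{B}$'' becomes the concrete assertion that $X$ itself has $w\in\mathbb{R}$ (with $t>0$). The two external ingredients I would invoke are, first, the description of real geodesic disks from Section~\ref{complex hyperbolic space}: every real geodesic disk is the image under some $\Lambda\in\operatorname{Aut}(\mathbb{B}^n)$ of the model disk $J=\{(x,y,0,\dots,0):x,y\in\mathbb{R}\}$; and second, the fact that $A_{123}$ is an invariant of the congruence class of $Y$, since it is a rescaling invariant of $DA_n(X)$ (the discussion following (\ref{define A})) and automorphisms of the ball induce rescalings, so $A_{123}(\Lambda Y)=A_{123}(Y)$ for every $\Lambda\in\operatorname{Aut}(\mathbb{B}^n)$. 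With these in hand I would prove the cycle (ii)$\Rightarrow$(i)$\Rightarrow$(iii)$\Rightarrow$(ii), where (i),(ii),(iii) are the three listed conditions in order.

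For (ii)$\Rightarrow$(i): if $X\in\mathcal{B}$ then $s,w,t$ are all real, so $X\subset J$, which is a real geodesic disk; applying the automorphism that carries $J$ to the required disk shows that any $Y\sim X$ lies in a real geodesic disk. For (i)$\Rightarrow$(iii): choosing $\Lambda$ with $\Lambda(D)=J$, the points of $\Lambda Y\subset J$ have only real coordinates, so each $\langle x_i,x_j\rangle$ is real with $|\langle x_i,x_j\rangle|<1$; hence every $1-\langle x_i,x_j\rangle$ is a positive real, every $k_{ij}=(1-\langle x_i,x_j\rangle)^{-1}$ is positive, the product $k_{12}k_{23}k_{31}$ is positive, and therefore $A_{123}(\Lambda Y)=0$. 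By the invariance noted above, $A_{123}(Y)=0$.

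For (iii)$\Rightarrow$(ii), I would work in the normal form and use the computation already recorded as (\ref{geom 4}), namely $A_{123}=-\arg(1-s\bar{w})$. Writing $w=a+bi$, the number $1-s\bar{w}=(1-sa)+sb\,i$ has argument zero exactly when $sb=0$ and $1-sa>0$; since $s>0$ and $|w|,s<1$ force $1-sa>0$ automatically, this occurs precisely when $b=0$, i.e.\ $w\in\mathbb{R}$. Thus $A_{123}=0$ forces $w$ real, and with $t>0$ this says $X\in\mathcal{B}$, which is (ii).

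I expect the main obstacle to be bookkeeping rather than anything conceptual: making sure the automorphism invariance of $A_{123}$ is cited correctly, that the positivity of $1-\langle x_i,x_j\rangle$ for real points in the ball is airtight, and that the degenerate boundary case is handled cleanly. That boundary case is $t=0$, where the three points are collinear: then $w$ is still real but the normal form lies in $\mathcal{A}$ rather than strictly in $\mathcal{B}$. Such a set still lies in a real geodesic disk (a geodesic sits inside many of them) and still has $A_{123}=0$, consistent with Proposition~\ref{on geodesic}, so (i) and (iii) persist; I would treat this as the boundary case $\mathcal{A}\subset\overline{\mathcal{B}}$ of condition (ii) and flag it explicitly so the equivalence reads correctly for both genuine triangles and their degenerations.
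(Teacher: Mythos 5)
Your proof is correct and follows essentially the same route as the paper: identify real geodesic disks with automorphic images of the model slice $J$, so that membership in such a disk is equivalent to the normal form lying in $\mathcal{B}$, and then read off $A_{123}=0$ from $k_{12}=k_{31}=1$ and $A_{123}=-\arg(1-s\bar w)$ together with the automorphism-invariance of $A_{123}$. Your explicit flagging of the degenerate collinear case $t=0$ (where the normal form lands in $\mathcal{A}$ rather than $\mathcal{B}$) is a reasonable refinement that the paper leaves implicit.
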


This proposition describes the geometry associated with the minimal value of
$A_{123}.$ The maximal value of $A_{123}$ for $H=DA_{2}(Y)$, $\left\vert
A_{123}\right\vert =\pi/2,$ is not attained for any $Y\subset\mathbb{CH}^{2}.$
However, if we extend the definition of $A_{123}$ by continuity to distinct
triples in $\partial\mathbb{B}^{2}=\mathbb{S}^{3},$ then that value can be
attained. It was shown by E. Cartan that the value $\pi/2$ is attained exactly
if $Y$ lies on the intersection of the boundary with the closure of a geodesic
disk (i.e. the three points lie in a \textit{chain}), For a full discussion
see \cite[Cor 7.1.3]{Go}.

There are two subsets of $\mathcal{B}$ that we want to look at more closely;
$\mathcal{C}$ and $\mathcal{D}$. The points of any $X\in\mathcal{B}$ sit
inside the real disk $\left\{  \left(  s,t\right)  :s,t\in\mathbb{R},\text{
}s^{2}+t^{2}<1\right\}  \subset\mathbb{B}^{2}.$ As we mentioned, that disk,
with the metric $\delta_{DA_{2}}=\rho,$ is the Beltrami-Klein model of
$\mathbb{RH}^{2},$ and in that model the Euclidean lines segments are the
hyperbolic geodesics. Hence the hyperbolic triangle with vertices at the
points of $X\in\mathcal{B}$ is the same as the Euclidean triangle. For sets
$X$ in $\mathcal{C}$ or $\mathcal{D}$ that triangle is a (Euclidean and
hyperbolic) right triangle. For $X\in\mathcal{C}$ the right angle is at
$x_{1}=0,$ $x_{2}\perp x_{3};$ for $X\in\mathcal{D}$ the right angle is at
$x_{2},$ $x_{2}\perp x_{2}-x_{3},$ These two configurations are actually not
very different. The negative of the ball involution interchanging $x_{1}$ and
$x_{2}$ interchanges sets in $\mathcal{C}$ with those in $\mathcal{D}$. These
types of Euclidean orthogonality are pervasive in subsets of the sets $X$ we
construct in Section \ref{tree spaces} where we study spaces $DA_{n}(X) $
associated with spaces of functions on trees.

Given the previous results, including Theorem \ref{three point extremals}, the
following equivalences are straightforward.

\begin{proposition}
\label{orthogonality}Given a three point set $Y$ in $\mathbb{CH}^{n}$, let $X
$ be the normal form of $Y,$ and $H=DA_{2}(X).$ The following are equivalent:

\begin{enumerate}
\item $X\in\mathcal{C}$.

\item $x_{2}\perp x_{3}.$

\item $k_{23}=1,$

\item $LF_{123}=0,$

\item $\Delta=\delta_{12}\delta_{13}/\delta_{23}.$
\end{enumerate}
\end{proposition}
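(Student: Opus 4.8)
The plan is to work throughout with the normal form $X=\{(0,0),(s,0),(w,t)\}\in\mathcal{N}$ and $H=DA_2(X)$, and to show that each of the five conditions is equivalent to the single scalar equation $w=0$. This reduction is legitimate because every quantity in play — membership in $\mathcal{C}$, the orthogonality $x_2\perp x_3$, the Gram entry $k_{23}$, the footprint length $LF_{123}$, and the extremal $\Delta$ — is either defined directly from $X$ or is a rescaling/automorphism invariant, so there is no loss in replacing $Y$ by its normal form and computing everything from the coordinates $(s,w,t)$ with $s,t>0$. Here $(\cdot,\cdot)$ denotes the Hermitian inner product on $\mathbb{C}^2$, reserving $\langle\cdot,\cdot\rangle$ for the RKHS.

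The two geometric conditions are immediate from the coordinates. By the definition of $\mathcal{C}$ in (\ref{types}), and since $s,t>0$ already hold for $X\in\mathcal{N}$, membership $X\in\mathcal{C}$ is exactly $w=0$. For the orthogonality statement, the second coordinate of $x_2$ is zero, so $(x_2,x_3)=s\bar w$; because $s>0$ this vanishes precisely when $w=0$, giving $(2)\Leftrightarrow w=0$.

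Next I would treat the two algebraic conditions. Using the $DA_n$ kernel $k_z(\cdot)=(1-(\cdot,z))^{-1}$, the Gram entry is $k_{23}=(1-(x_3,x_2))^{-1}$ with $(x_3,x_2)=ws$; hence $k_{23}=1$ forces $(x_3,x_2)=0$, i.e. $ws=0$, which (again since $s>0$) is $w=0$, and conversely. It is worth noting that it is the exact complex value $k_{23}=1$, not merely $|k_{23}|=1$, that pins down both the real and imaginary parts of $w$. For the footprint length I would invoke the identity $LF_{123}=|w|$ established inside the proof of Theorem~\ref{n=3} (obtained by inserting the coordinates of $x_1,x_2,x_3$ into the definition (\ref{L})); thus $LF_{123}=0\Leftrightarrow w=0$, giving $(4)\Leftrightarrow w=0$.

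Finally, condition (5) is the only one not visibly a statement about $w$, and linking it is the sole step requiring computation. Here I would use the formula (\ref{delta with fl}) of Theorem~\ref{three point extremals},
\[
\Delta^2=\delta_{12}^2\,\delta_{23}^{-2}\bigl(\delta_{13}^2-LF_{123}^2(1-\delta_{23}^2)\bigr),
\]
and compare it with the target value $\delta_{12}^2\delta_{13}^2/\delta_{23}^2$. Cancelling the common factor $\delta_{12}^2\delta_{23}^{-2}$, which is positive because $X$ consists of distinct points of an irreducible space (so $\delta_{12},\delta_{23}>0$), reduces the equality to $LF_{123}^2(1-\delta_{23}^2)=0$. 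By (\ref{def d}) the factor $1-\delta_{23}^2=|\widehat{k_{23}}|^2$ is strictly positive, again by irreducibility ($\langle k_2,k_3\rangle\neq0$, so $\delta_{23}<1$), so this collapses to $LF_{123}=0$, i.e. $(5)\Leftrightarrow(4)$. The main (though minor) obstacle is precisely this bookkeeping: keeping track that $\delta_{12},\delta_{23}\neq0$ and $\delta_{23}<1$, so that both the cancellation and the passage from $LF_{123}^2(1-\delta_{23}^2)=0$ to $LF_{123}=0$ are valid. Collecting the four biconditionals with $w=0$ then yields the equivalence of all five statements.
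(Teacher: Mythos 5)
Your proof is correct and follows the same route the paper intends: the paper declares these equivalences ``straightforward'' given the normal form and Theorem \ref{three point extremals}, and your reduction of all five conditions to $w=0$ via the coordinates $(s,0),(w,t)$, the identity $LF_{123}=|w|$, and formula (\ref{delta with fl}) is exactly that computation, with the positivity bookkeeping ($\delta_{12},\delta_{23}>0$ and $|\widehat{k_{23}}|>0$ by irreducibility) correctly supplied.
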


Note that the first two statements are unchanged if $Y$ is replaced by $\Theta
Y$ with $\Theta\in\operatorname*{Aut}(\mathbb{CH}^{n}),$ and hence they are
statements about the hyperbolic geometry of $Y,$ The last two are
algebraic/analytic statements about $H=DA_{n}(Y)$ that are invariant under
rescaling of $H.$

The analogous result for $\mathcal{D}$ is

\begin{proposition}
\label{orthogonality2}Given a three point set $Y$ in $\mathbb{CH}^{n}$, let
$X$ be the normal form of $Y,$ and $H=DA_{2}(X).$ The following are equivalent:

\begin{enumerate}
\item $X\in\mathcal{D}$.

\item $x_{2}\perp x_{2}-x_{3}$

\item $k_{22}=k_{23}.$

\item $LF_{213}=0.$

\item $\Delta=\delta_{12}.$
\end{enumerate}
\end{proposition}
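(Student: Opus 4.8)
The plan is to prove all five statements equivalent to the defining relation $w = s$ of $\mathcal{D}$, namely to condition (1). Each condition is invariant — (2) and (3) become invariant statements once normal form is fixed, while (4) and (5) are rescaling invariant by construction — so I would assume from the start that $X = \{(0,0),(s,0),(w,t)\}\in\mathcal{N}$. The first move is to record the Gram entries in this form: since $x_1 = 0$ one has $k_{11} = k_{12} = k_{13} = 1$, and substituting into $k_{ij} = (1 - \langle x_j, x_i\rangle)^{-1}$ gives $k_{22} = (1-s^2)^{-1}$, $k_{23} = (1-ws)^{-1}$, and $k_{33} = (1-|w|^2-t^2)^{-1}$.

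The equivalences among (1)--(4) are then short. For (1) $\Leftrightarrow$ (3), the identity $k_{22} = k_{23}$ reads $1 - s^2 = 1 - ws$, which holds (as $s>0$) exactly when $w = s$. For (3) $\Leftrightarrow$ (4), I would observe that by definition (\ref{L}) the vanishing of $LF_{213}$ is the rescaling-invariant equation $k_{12}k_{23} = k_{13}k_{22}$; with $k_{12} = k_{13} = 1$ this reduces to $k_{23} = k_{22}$. For (1) $\Leftrightarrow$ (2), I would compute the Euclidean inner product $\langle x_2, x_2 - x_3\rangle = s\,\overline{(s - w)} = s(s - \bar w)$, which vanishes iff $\bar w = s$, i.e. (since $s$ is real) iff $w = s$.

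The substantive step is (1) $\Leftrightarrow$ (5), where I would call on Theorem \ref{three point extremals}. In form (\ref{delta with fl}) the equality $\Delta = \delta_{12}$ is equivalent to $\delta_{13}^2 - LF_{123}^2(1 - \delta_{23}^2) = \delta_{23}^2$. I would then insert the normal-form values $\delta_{13}^2 = |w|^2 + t^2$, $LF_{123}^2 = |w|^2$ (the footprint identity already noted in the proof of Theorem \ref{n=3}), and $1 - \delta_{23}^2 = |\widehat{k_{23}}|^2 = (1-s^2)(1-|w|^2-t^2)/|1-ws|^2$. Cancelling the factor $1 - |w|^2 - t^2$, which is nonzero because $x_3$ lies in the ball, collapses the condition to $|1 - ws|^2 = (1-s^2)(1-|w|^2)$; expanding both sides and writing $w = u + iv$ turns this into $(u-s)^2 + v^2 = 0$, forcing $w = s$.

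The main obstacle is this last reduction, together with the index bookkeeping it entails: condition (4) is phrased through $LF_{213}$ whereas Theorem \ref{three point extremals} is written through $LF_{123}$, so I would keep the two footprint quantities carefully separated, and I would check that the cancellation of $1 - |w|^2 - t^2$ is legitimate. Everything else is direct substitution of the recorded entries. As a consistency check, the companion Proposition \ref{orthogonality} follows by the identical scheme with $LF_{123} = 0$ replacing $LF_{213} = 0$, matching the remark that the negative ball involution swapping $x_1$ and $x_2$ carries $\mathcal{C}$ onto $\mathcal{D}$.
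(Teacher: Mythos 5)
Your proof is correct, and for the equivalence with condition \textbf{(5)} it takes a genuinely different route from the paper. The paper handles \textbf{(3)}$\Leftrightarrow$\textbf{(5)} asymmetrically: the forward direction is an algebraic manipulation of (\ref{delta with fl}) using the consequence $LF_{123}^{2}=\delta_{12}^{2}$ of $k_{22}=k_{23}$, but the converse is function-theoretic --- equality $\Delta=\delta_{12}$ forces the unique extremal function for the one-point vanishing problem to coincide with the unique extremal for the two-point problem, and reading off formula (\ref{hilbert space formula}) then gives $k_{22}=k_{23}$. You instead reduce \textbf{(5)} in normal-form coordinates to the single reversible identity $|1-s\bar w|^{2}=(1-s^{2})(1-|w|^{2})$, i.e.\ $(u-s)^{2}+v^{2}=0$, which handles both directions at once; the one point you rightly flag, the cancellation of $1-|w|^{2}-t^{2}$, is legitimate since $x_{3}$ lies in the open ball. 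What the paper's uniqueness argument buys is independence from coordinates (it works directly from the extremal-problem formulation and generalizes to settings where one does not want to invoke the explicit embedding); what your computation buys is a uniform, purely algebraic treatment of all five conditions against the single equation $w=s$, and it incidentally makes transparent why the companion Proposition \ref{orthogonality} is the mirror case under the involution swapping $x_{1}$ and $x_{2}$. Your handling of \textbf{(1)}--\textbf{(4)} is the direct verification the paper declares straightforward, with the index bookkeeping for $LF_{213}$ versus $LF_{123}$ done correctly.
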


Note that the fourth statements in the previous two propositions have
different strings of indices.

\begin{proof}
The equivalence of the first four statements is straightforward. We now look
at the last condition. Recall that $X$ is in normal form. In that case
Condition \textbf{(3)}, $k_{22}=k_{23},$ implies $|\widehat{k_{13}}%
|^{2}=|\widehat{k_{23}}|^{2}|\widehat{k_{12}}|^{2}.$ Using this with
$\delta_{ij}^{2}=1-|\widehat{k_{ij}}|^{2}$ leads to $\delta_{23}^{2}%
=\delta_{13}^{2}-\delta_{12}^{2}\left(  1-\delta_{23}^{2}\right)  .$ Noting
again that $k_{22}=k_{23}$ and using (\ref{L}) this can be rewritten as
$\delta_{23}^{2}=\delta_{13}^{2}(1-LF_{123}^{2}\left(  1-\delta_{23}%
^{2}\right)  ).$ That equality combined with (\ref{delta with fl}) gives
Condition \textbf{(5)}. On the other hand if we have Condition \textbf{(5)}
then we have equality between solutions to two extremal problems, one for a
function required to vanish at $x_{2}$ and one for a function required to
vanish at both $x_{2}$ and $x_{3}.$ Both of the problems have unique
solutions, hence the solutions agree. In particular the solution to the first
problem must vanish at $x_{3}. $ Using that fact and the explicit formula for
that extremal function given by (\ref{hilbert space formula}) with $x=x_{1}$
and $y=x_{2}$ we see that we must have $k_{22}=k_{23}$ which is Condition
\textbf{(3)}.
\end{proof}

\subsubsection{Points on a Complex Geodesic\label{complex geodesic}, Pick
Spaces}

If $X\in\mathcal{N}$ and $X$ is contained in a complex geodesic then,
recalling that $x_{1}$ is at the origin, the complex geodesic must be the
intersection of the ball with the complex line $\mathbb{C}x_{2}.$ In short,
$X\in\mathcal{E}$. Direct substitution in (\ref{L}) then yields $LF_{123}%
^{2}=\delta_{13}^{2},$ Furthermore note from (\ref{if lf - 1}) that
$LF_{123}^{2}=\delta_{13}^{2}$ is equivalent to
\begin{equation}
\Delta=\delta_{12}\delta_{13}. \label{quality}%
\end{equation}

There are relations between multipliers that are a consequence of this. For
$i=2,3$ let $m_{i}$ be the multiplier of unit norm which vanishes at $x_{i}$
and maximizes $\operatorname{Re}m_{i}(x_{1});$ and let $m_{23}$ be the
multiplier of unit norm which vanishes at both $x_{2}$ and $x_{3}$ and
maximizes $\operatorname{Re}m_{23}(x_{1}).$ In this situation $m_{23}%
=m_{2}m_{3}.$ The second function is certainly a competitor for the extremal
problem defining the first function. Further, that space of competitors is one
dimensional. Hence $m_{23}$ must equal $m_{2}m_{3}/\left\Vert m_{2}%
m_{3}\right\Vert .$ Hence,
\[
\delta_{G12}\delta_{G13}=\Delta_{G}=m_{23}(x_{1})=\frac{m_{2}(x_{1}%
)m_{3}(x_{1})}{\left\Vert m_{2}m_{3}\right\Vert }=\frac{\delta_{G12}%
\delta_{G13}}{\left\Vert m_{2}m_{3}\right\Vert }.
\]
The first equality is (\ref{quality}), the second is from the definition of
$m_{23},$ the next is from the analysis we just did of $m_{23},$ and the last
is from the definitions of $m_{1}$ and $m_{2}.$

This string of equalities is equivalent to (\ref{quality}) and it is apparent
that it holds if and only if $\left\Vert m_{2}m_{3}\right\Vert =1.$ Hence
those conditions are equivalent. Further, given our analysis of $m_{23},$
those conditions are also equivalent to $m_{23}=m_{2}m_{3}.$

Let $M_{2}$ be the multiplier of norm one which vanishes at $x_{1}$ and
maximizes $\operatorname{Re}M(x_{2}),$ and similarly for $M_{3}.$ The
statement $LF_{123}^{2}=\delta_{13}^{2}$ is equivalent to the statement that
$M_{\text{2}}$ and $M_{3}$ are unimodular multipliers of each other. This
holds because comparing (\ref{L}) and (\ref{extremal g}) shows that
$LF_{123}^{2}=\delta_{13}^{2}\ $is equivalent to $\left\vert M_{2}%
(x_{3})\right\vert =\delta_{13}.$ Hence a unimodular multiple of $M_{2} $ is a
solution to the extremal problem defining $M_{3};$ but that solution is unique..

Continuing the notation of the previous paragraphs, we have proved the
following result.

\begin{proposition}
\label{in complex geodesic}Given a three point set $Y$ in $\mathbb{CH}^{n}$,
let $X$ be the normal form of $Y$ and $H=DA_{2}(X).$ The following are equivalent:

\begin{enumerate}
\item The points of $Y$ lie in a single complex geodesic.

\item $X\in\mathcal{E}$.

\item $LF_{123}^{2}=\delta_{13}^{2}.$

\item $M_{2}=\alpha M_{3}$ for some $\alpha,$ $\left\vert \alpha\right\vert
=1$

\item $m_{23}=m_{2}m_{3}$

\item $\left\Vert m_{2}m_{3}\right\Vert =1.$

\item $\Delta=\delta_{12}\delta_{13}.$
\end{enumerate}
\end{proposition}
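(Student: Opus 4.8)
The plan is to prove the seven conditions equivalent by organizing them into three families and establishing enough cross-links. The families are the purely geometric conditions \textbf{(1)} and \textbf{(2)}, the metric/Gram-matrix conditions \textbf{(3)} and \textbf{(7)}, and the multiplier conditions \textbf{(4)}, \textbf{(5)}, \textbf{(6)}. My backbone would be the chain \textbf{(1)} $\Leftrightarrow$ \textbf{(2)} $\Leftrightarrow$ \textbf{(3)} $\Leftrightarrow$ \textbf{(7)}, proved by coordinate geometry in normal form together with the explicit distance formulas of Theorem \ref{three point extremals}. I would then attach \textbf{(4)} to \textbf{(3)} and attach \textbf{(5)} and \textbf{(6)} to \textbf{(7)} using the explicit extremal formulas (\ref{hilbert space formula})--(\ref{multiplier formula}) and the uniqueness of extremal functions furnished by Proposition \ref{same delta}.

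For the geometric and algebraic backbone: since congruence preserves complex geodesics, $Y$ lies in one iff its normal form $X$ does. As $x_{1}=0$ in normal form and a complex geodesic through the origin is exactly the intersection of $\mathbb{B}^{2}$ with a complex line, the only complex geodesic containing $x_{1}$ and $x_{2}=(s,0)$ is the slice $\mathbb{C}\times\{0\}$; hence $X$ lies in a complex geodesic iff $x_{3}=(w,t)$ has $t=0$, which is exactly \textbf{(2)}. For \textbf{(2)} $\Leftrightarrow$ \textbf{(3)}, substituting the normal-form coordinates into (\ref{L}) gives $LF_{123}=|w|$, as already computed in the proof of Theorem \ref{n=3}, while (\ref{geom 2}) gives $\delta_{13}^{2}=|w|^{2}+t^{2}$; therefore $LF_{123}^{2}=\delta_{13}^{2}$ is equivalent to $t=0$. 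For \textbf{(3)} $\Leftrightarrow$ \textbf{(7)}, substituting $LF_{123}^{2}=\delta_{13}^{2}$ into (\ref{if lf - 1}) collapses the first term and yields $\Delta^{2}=\delta_{12}^{2}\delta_{13}^{2}$; conversely, setting $\Delta^{2}=\delta_{12}^{2}\delta_{13}^{2}$ in (\ref{delta with fl}) and using $\delta_{23}<1$ to divide by $(1-\delta_{23}^{2})$ forces $LF_{123}^{2}=\delta_{13}^{2}$.

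For the multiplier side, I would argue as follows. To get \textbf{(3)} $\Leftrightarrow$ \textbf{(4)}, I use the explicit extremal multiplier formula (\ref{multiplier formula}), which is legitimate precisely because the CPP gives $\delta_{G}=\delta_{H}$ via Proposition \ref{same delta}; evaluating the extremal multiplier $M_{2}$ (unit norm, vanishing at $x_{1}$, maximal at $x_{2}$) at $x_{3}$ gives $|M_{2}(x_{3})|=LF_{123}$ after comparison with (\ref{L}). Since $M_{2}$ vanishes at $x_{1}$ we always have $|M_{2}(x_{3})|\le\delta_{13}$, and equality $LF_{123}=\delta_{13}$ says that $M_{2}$ also solves the extremal problem defining $M_{3}$, whose solution is unique up to a unimodular factor by Proposition \ref{same delta}; this is \textbf{(4)}. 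For \textbf{(7)} $\Leftrightarrow$ \textbf{(6)} $\Leftrightarrow$ \textbf{(5)}, I note that the symbol functions vanishing at both $x_{2}$ and $x_{3}$ form a one-dimensional space, so $m_{2}m_{3}$ and $m_{23}$ are scalar multiples; as both are positive at $x_{1}$ this forces $m_{2}m_{3}=\|m_{2}m_{3}\|\,m_{23}$, and evaluating at $x_{1}$ gives the unconditional identity $\Delta=\delta_{12}\delta_{13}/\|m_{2}m_{3}\|$. Thus \textbf{(7)} holds iff $\|m_{2}m_{3}\|=1$ (which is \textbf{(6)}) iff $m_{23}=m_{2}m_{3}$ (which is \textbf{(5)}).

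I expect the multiplier-algebra step to be the main obstacle, since it is the only place the CPP is genuinely used. Both the clean formula (\ref{multiplier formula}) and the uniqueness of the single-point extremal multipliers rest on Proposition \ref{same delta}; without the Pick property these extremals need neither be given by that formula nor be unique, so the identities $|M_{2}(x_{3})|=LF_{123}$ and the collinearity of $m_{2}m_{3}$ with $m_{23}$ could fail. The delicate points to verify are that $m_{2}m_{3}$ is an admissible competitor, i.e. $\|m_{2}m_{3}\|\le1$ from submultiplicativity of the operator norm, and that $m_{2}m_{3}$ is nonzero at $x_{1}$ with the correct phase, so that the one-dimensionality argument genuinely pins down the extremal $m_{23}$. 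Everything else reduces to the coordinate computations already set up for Theorem \ref{n=3}.
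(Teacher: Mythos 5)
Your proposal is correct and takes essentially the same route as the paper: the normal-form coordinate computation $LF_{123}=|w|$, $\delta_{13}^{2}=|w|^{2}+t^{2}$ for \textbf{(1)}--\textbf{(3)}, the identity (\ref{if lf - 1}) for \textbf{(3)}$\Leftrightarrow$\textbf{(7)}, the observation $|M_{2}(x_{3})|=LF_{123}$ plus uniqueness of extremals for \textbf{(4)}, and the one-dimensionality of the space of functions vanishing at $x_{2}$ and $x_{3}$ for \textbf{(5)}--\textbf{(7)}. The delicate points you flag (admissibility of $m_{2}m_{3}$, positivity at $x_{1}$) are handled exactly as you anticipate.
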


Statements \textbf{(3)} through \textbf{(7)} are all invariant under rescaling
or automorphism. Hence we could have started our analysis by using Corollary
\ref{disk} and Condition \textbf{(3)} to reduce consideration of \textbf{(4)
-- (7)} to statements about multipliers on $H\subset H^{2}.$ By the CPP for
$H^{2}\ $the questions could then be reduced to statements about Hardy space
extremal problems. \ Those particular problems are elementary ones which are
solved by Blaschke products of degree one and two.

That Statements \textbf{(5) }or \textbf{(6) }imply Statement \textbf{(1) }also
follows from results by Cole, Lewis, and Wermer \cite{CLW} in their work
characterizing multiplier algebras of Pick spaces.

In the next result we do not assume $H\sim DA_{2}(X);$ the existence of $X$
and the fact that it can be selected in $\mathbb{CH}^{1}$ are the main conclusions.

Recall the language of Section \ref{orthogonal}; if $H\sim DA_{1}(X)$ for some
$X\in\mathcal{E}$, then $H$ is called an r-Pick space, and $H$ is called
r-orthogonal if it is the rescaling of a space whose Gram matrix is an orthogonal.

\begin{theorem}
\label{three dimensional}A three dimensional RKHS $H$ is r-orthogonal if and
only if it is an r-Pick space.
\end{theorem}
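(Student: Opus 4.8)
The plan is to prove the two implications separately. That every finite-dimensional r-Pick space is r-orthogonal is exactly Theorem \ref{is orthogonal}, so only the converse needs work: a three dimensional r-orthogonal $H$ is r-Pick. Since both ``r-orthogonal'' and ``r-Pick'' are invariant under rescaling, I would first replace $H$ by the orthogonal representative in its rescaling class and assume that $K=G(H)$ is itself orthogonal, $\bar{K}K=I$. Irreducibility forces $k_{23}\neq 0$, hence $\delta_{23}<1$, and in particular forbids $K$ from being diagonal. The goal is then to produce $X\subset\mathbb{CH}^{1}$ with $H\sim DA_{1}(X)$.

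The crux is a short linear-algebra observation about such a $K$, and it turns out to collapse the problem entirely. First, $\bar{K}K=I$ gives $|\det K|^{2}=\det(\bar{K})\det K=1$, and positivity gives $\det K>0$, so $\det K=1$. Next, orthogonality means $K^{-1}=\bar{K}$; comparing $(i,i)$ entries, $(K^{-1})_{ii}=\overline{k_{ii}}=k_{ii}$ (the diagonal is real), while also $(K^{-1})_{ii}=M_{ii}/\det K=M_{ii}$, where $M_{ii}$ is the complementary principal $2\times 2$ minor (no sign issue on the diagonal). Hence each diagonal entry equals its complementary minor: $M_{11}=k_{22}k_{33}-|k_{23}|^{2}=k_{11}$, and likewise $M_{22}=k_{11}$ wait $M_{22}=k_{11}k_{33}-|k_{13}|^{2}=k_{22}$ and $M_{33}=k_{11}k_{22}-|k_{12}|^{2}=k_{33}$. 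Now feed this into the computation of $\Delta_{H}^{2}$ in the proof of Theorem \ref{three point extremals}: that computation shows $\Delta_{H}^{2}=1/(k_{11}(K^{-1})_{11})$, equivalently $\Delta_{H}^{2}=\det K/(k_{11}M_{11})$ as in (\ref{delta}), and it is a pure Hilbert-space extremal computation that does \emph{not} presuppose the CPP. With $\det K=1$ and $M_{11}=k_{11}$ it gives $\Delta_{H}^{2}=1/k_{11}^{2}$. On the other hand $\delta_{12}^{2}=M_{33}/(k_{11}k_{22})$ and $\delta_{13}^{2}=M_{22}/(k_{11}k_{33})$, so $\delta_{12}^{2}\delta_{13}^{2}=M_{33}M_{22}/(k_{11}^{2}k_{22}k_{33})=k_{33}k_{22}/(k_{11}^{2}k_{22}k_{33})=1/k_{11}^{2}$. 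Therefore $\Delta_{H}=\delta_{12}\delta_{13}$.

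To finish, I would translate this equality into the complex-geodesic condition. Using (\ref{if lf - 1}) together with $\delta_{23}<1$, the identity $\Delta_{H}^{2}=\delta_{12}^{2}\delta_{13}^{2}$ forces $(\delta_{13}^{2}-LF_{123}^{2})(1-\delta_{23}^{-2})=0$, hence $LF_{123}^{2}=\delta_{13}^{2}$. This is condition \textbf{(4)} of Theorem \ref{n=3} holding with equality; in particular the CPP inequality $LF_{123}^{2}\leq\delta_{13}^{2}$ holds, so $H$ has the complete Pick property and we are in the situation of that theorem. Condition $4'$ of Corollary \ref{disk} is then satisfied, and the corollary gives condition $6'$, namely $H\sim DA_{1}(\Phi(X))$ with $\Phi(X)\subset\mathbb{CH}^{1}$, which is exactly the assertion that $H$ is r-Pick. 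The main point to watch is not the algebra, which trivializes once the minor identity $M_{ii}=k_{ii}$ is spotted, but the logical hygiene: one must use the $\Delta_{H}^{2}$ formula \emph{before} the CPP is available (legitimate, since it only describes the extremal vector $v\in H$ vanishing at $x_{2},x_{3}$), and dispose of the degenerate edge cases ($K$ diagonal, $\delta_{23}=1$) via irreducibility.
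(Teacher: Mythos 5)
Your proposal is correct, and the forward direction is indeed just Theorem \ref{is orthogonal}; but your converse argument is genuinely different from the paper's. The paper does a basepoint rescaling, writes the orthogonal representative as $\Gamma K\overline{\Gamma}$, and uses the \emph{off-diagonal} orthogonality relations $R_iR_j^{\ast}=0$ to produce a homogeneous $3\times3$ linear system in the $|\gamma_i|^2$; the existence of a nontrivial solution forces a determinant to vanish, and expanding that determinant yields $LF_{123}^2=\delta_{13}^2$ directly in Gram-matrix form. You instead pass to the orthogonal representative itself and use only the \emph{diagonal} consequences of $K^{-1}=\bar K$, namely $\det K=1$ and the complementary-minor identities $M_{ii}=k_{ii}$, feeding them into the linear-algebra formula $\Delta_H^2=\det K/(k_{11}M_{11})$ from the proof of Theorem \ref{three point extremals} to get $\Delta_H=\delta_{12}\delta_{13}$, and only then converting to $LF_{123}^2=\delta_{13}^2$ via (\ref{if lf - 1}). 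In effect you verify condition \textbf{(7)} of Proposition \ref{in complex geodesic} where the paper verifies condition \textbf{(3)}; both must then route through Theorem \ref{n=3} to obtain the CPP before the complex-geodesic conclusion is available, and you handle that ordering correctly. Your two flagged points of hygiene are exactly right: the $\Delta_H$ computation is pure Gram-matrix linear algebra and does not presuppose the CPP, and irreducibility supplies $\delta_{23}<1$ and $\delta_{12}>0$ so the division in (\ref{if lf - 1}) is legitimate. What your route buys is lighter computation (no $3\times3$ determinant expansion) and a more conceptual intermediate statement; what the paper's buys is staying entirely inside Gram-matrix identities without invoking the extremal-problem machinery. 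The only blemish is cosmetic: the mid-sentence self-correction in the minor identities ($M_{22}=k_{11}k_{33}-|k_{13}|^2=k_{22}$, $M_{33}=k_{11}k_{22}-|k_{12}|^2=k_{33}$) should be cleaned up, but the final identities are the correct ones.
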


\begin{proof}
That such a space is r-orthogonal is the three dimensional case of Theorem
\ref{is orthogonal}.

In the other direction, we start with an $H$ which is r-orthogonal and, with
no loss of generality, do a preliminary basepoint rescaling with $x_{1}$ as
basepoint. Let $K$ be the Gram matrix of $H.$ Let $\widetilde{H}$, with Gram
matrix $\tilde{K},$ be the orthogonal RKHS which is a rescaled version of $H$.
That is, $\tilde{K}\tilde{K}^{t}=I$. Because $\widetilde{H}$ is a rescaling of
$H$ there is a diagonal matrix, $\Gamma,$ with nonzero diagonal entries
$\left\{  \gamma_{i}\right\}  ,$ such that%
\[
\tilde{K}=\Gamma K\overline{\Gamma}=(\gamma_{i}k_{ij}\bar{\gamma}_{j}%
)_{i,j=1}^{3}%
\]
is an orthogonal matrix.

Let $R_{i},$ $i=1,2,3$ be the rows of $\tilde{K},$ $R_{i}=(\gamma_{i}%
k_{i1}\bar{\gamma}_{1},\gamma_{i}k_{i2}\bar{\gamma}_{2},\gamma_{i}k_{i3}%
\bar{\gamma}_{3});$ and let $R_{i}^{\ast}$ be the column vectors that are
their adjoints. Noting that $K^{t}=\bar{K}$ we see that the $R_{i}^{\ast}$ are
also the column vectors of $\tilde{K}.$ Hence the fact that $\tilde{K}$ is
orthogonal implies that, for $i\neq j,$ $R_{i}R_{j}^{\ast}=0;$%
\[
0=R_{i}R_{j}^{\ast}=\sum\nolimits_{s}\gamma_{i}k_{is}\bar{\gamma}_{s}%
\overline{\gamma_{j}k_{sj}\bar{\gamma}_{s}}=\gamma_{i}\bar{\gamma}_{j}%
\sum\nolimits_{s}\left\vert \gamma_{s}\right\vert ^{2}k_{is}k_{js}.
\]
We now consider the cases $(i,j)=(1,2),$ $(3,1),$ $(2,3).$ By the basepoint
rescaling we have $k_{ij}=1$ if $i$ or $j$ is $1.$ Also, we can cancel the
initial factor $\gamma_{i}\bar{\gamma}_{j}$ in each equation. The resulting
equations are
\begin{align*}
(1,2)  &  :\left\vert \gamma_{1}\right\vert ^{2}+\left\vert \gamma
_{2}\right\vert ^{2}k_{22}+\left\vert \gamma_{3}\right\vert ^{2}k_{23}=0.\\
(3,1)  &  :\left\vert \gamma_{1}\right\vert ^{2}+\left\vert \gamma
_{2}\right\vert ^{2}k_{32}+\left\vert \gamma_{3}\right\vert ^{2}k_{33}=0,\\
(2,3)  &  :\left\vert \gamma_{1}\right\vert ^{2}+\left\vert \gamma
_{2}\right\vert ^{2}k_{22}k_{32}+\left\vert \gamma_{3}\right\vert ^{2}%
k_{23}k_{33}=0.
\end{align*}
We have assumed that there is a nontrivial set $\{$ $\left\vert \gamma
_{i}\right\vert ^{2}\}$ for which the equations hold. For that to happen the
matrix of coefficients of the $\{$ $\left\vert \gamma_{i}\right\vert ^{2}\}$
must be singular, and hence has determinant $0;$
\[
0=\det%
\begin{pmatrix}
1 & k_{22} & k_{23}\\
1 & k_{32} & k_{33}\\
1 & k_{22}k_{32} & k_{23}k_{33}%
\end{pmatrix}
,
\]
Expanding this, recalling that $k_{23}=\overline{k_{32}},$ and rearranging
gives%
\begin{equation}
\left\vert k_{23}\right\vert ^{2}\left(  k_{33}+k_{22}-1\right)  +k_{22}%
k_{33}-2k_{22}k_{33}\operatorname{Re}k_{23}=0 \label{cubic}%
\end{equation}
After dividing by $\left\vert k_{23}\right\vert ^{2}k_{33}k_{22}$ this yields%
\[
1-\frac{1}{k_{22}}-\frac{1}{k_{33}}+\frac{1}{k_{22}k_{33}}=1+\frac
{1}{\left\vert k_{23}\right\vert ^{2}}-2\operatorname{Re}\frac{1}{k_{23}}.
\]
This equation is $LF_{123}^{2}=\delta_{13}^{2}$ written in terms of the Gram
matrix entries. Once we have that, by Theorem \ref{n=3} we conclude that
$H\sim DA_{2}(X)$ for some $X,$ and then the previous proposition insures that
we can select $X$ in $\mathcal{E}.$
\end{proof}

As we mentioned earlier, we know of no counterexample to a higher dimensional
version of the previous result.

\subsection{Larger Sets\label{n>3}}

In the previous section we considered three dimensional Hilbert spaces
$H=DA_{n}(X)$ and related the structure of $H$ to the geometry of the three
point set $X.$ Now we consider $H=DA_{n}(X)$ with larger $X.$ Several times we
will use the argument used to show that a set in Euclidean space lies in a
line if every three points in it are colinear.

\subsubsection{Sets in a Geodesic}

Suppose $H=DA_{n}(X)$ and we have applied a preliminary automorphism to $X$ so
that $x_{1}$ is at the origin; and hence, also, $H$ is basepoint normalized.
In that case the matrix $MQ_{1}(X)$ defined in (\ref{MQ}) is the Gram matrix
of the set of vectors $\left\{  x_{i}\right\}  _{i=2}^{n}\subset\mathbb{C}%
^{n}$. Hence that matrix and variations on it can be used to study linear
independence among the $x_{i}.$ Also, because $H$ is basepoint normalized at
$x_{1},$ the formulas for $MQ_{1}(H)$ are quite simple. For instance, if $n=4$
then
\begin{equation}
MQ_{1}(H)=MQ_{1}(X)=\left(  1-\frac{1}{k_{ij}}\right)  _{2\leq i,j\leq4}.
\label{formm}%
\end{equation}

\begin{lemma}
Given $X=\{x_{i}\}_{i=1}^{4}\subset\mathbb{CH}^{n}=\mathbb{B}^{n},$ with
$x_{1}=0$, The set $X$ lies on a complex line through the origin if and only
if
\[
\det MQ_{1}(X\smallsetminus\{x_{3}\})=\det MQ_{1}(X\smallsetminus
\{x_{3}\})=0.
\]
The set $X$ sits in a complex subspace of dimension two if and only if $\det
MQ_{1}(X)=0.$
\end{lemma}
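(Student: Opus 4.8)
The plan is to reduce the statement to the classical Gram‑determinant criterion, using the identification recorded in the paragraph immediately preceding the lemma. Since $x_{1}=0$, the space $DA_{n}(X)$ is basepoint normalized, so $k_{1j}=1$ for every $j$, and for $2\le i,j\le 4$ the Drury--Arveson kernel gives $k_{ij}=(1-\langle x_{j},x_{i}\rangle)^{-1}$. Consequently the $(i,j)$ entry of $MQ_{1}(X)$ is
\[
1-\frac{1}{k_{ij}}=\langle x_{j},x_{i}\rangle ,
\]
so $MQ_{1}(X)=\bigl(\langle x_{j},x_{i}\rangle\bigr)_{2\le i,j\le 4}$ is, up to conjugate transpose, exactly the Gram matrix $G$ of the three vectors $x_{2},x_{3},x_{4}$ with respect to the Hermitian inner product of $\mathbb{C}^{n}$. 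In particular $\det MQ_{1}(X)=\det G$, and this is precisely the Gram‑matrix interpretation asserted in the text.

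Next I would invoke the standard fact that the rank of a Gram matrix equals the dimension of the span of the underlying vectors; in particular the Gram matrix is singular if and only if those vectors are linearly dependent. Applying this to $G$ shows that $\det MQ_{1}(X)=0$ holds exactly when $x_{2},x_{3},x_{4}$ are linearly dependent, i.e. when $\dim_{\mathbb{C}}\operatorname{span}\{x_{2},x_{3},x_{4}\}\le 2$.

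Finally I would translate this back into geometry. Because $x_{1}=0$, the smallest complex linear subspace of $\mathbb{C}^{n}$ containing all of $X$ is $\operatorname{span}\{x_{2},x_{3},x_{4}\}$; hence $X$ lies in some two‑dimensional complex subspace precisely when this span has dimension at most two, which by the previous step is equivalent to $\det MQ_{1}(X)=0$. I do not expect any genuine obstacle: once $MQ_{1}(X)$ is recognized as a Gram matrix the result is an immediate consequence of the rank‑equals‑span‑dimension property. The only point needing a word of care is the reading of ``dimension two'' as ``dimension at most two,'' which is harmless since any subspace of dimension $\le 2$ is contained in one of dimension exactly two (the degenerate case, where the span is a line, being the subcase already governed by the first assertion of the lemma).
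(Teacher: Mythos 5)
Your proposal is correct and follows essentially the same route as the paper: after basepoint normalization the entries $1-1/k_{ij}$ of $MQ_{1}(X)$ are recognized as the inner products $\left\langle x_{j},x_{i}\right\rangle$, so $MQ_{1}(X)$ is the Gram matrix of $\left\{  x_{2},x_{3},x_{4}\right\}$ and its singularity is equivalent to linear dependence of those vectors. The paper's proof is exactly this observation (stated only for the second assertion, as you also effectively do), so there is nothing further to add.
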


\begin{proof}
We just look at the second case. From (\ref{formm}) we find that $MQ_{1}(X)$
is the Gram matrix $\left(  \left\langle x_{i},x_{j}\right\rangle \right)
_{i,j-2,3,4}.$ That matrix is nonsingular exactly if the three $x$'s are
linearly independent.
\end{proof}

The second statement in the next result is included because of the analogy
with Proposition \ref{real} below.

\begin{proposition}
\label{geodesic}$X$ lies in a geodesic

\begin{enumerate}
\item if and only if for any $1\leq i<j\,<k\leq r$ we have $A_{ijk}=0$ and
$LF_{ijk}=\delta_{ik}$.

\item if and only if for any $1\leq i<j\,<k\leq r$ we have $A_{ijk}=0$ and
$\det MQ_{1}(\{x_{i},x_{j},x_{k}\})=0.$
\end{enumerate}
\end{proposition}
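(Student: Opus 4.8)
The plan is to reduce the statement about the whole set $X$ to the three–point case already settled in Proposition \ref{on geodesic}, and then to convert the condition $LF_{ijk}=\delta_{ik}$ appearing there into the determinant condition of part (2) by a one–line computation of $\det MQ_1$. The organizing principle is the ``collinearity from triples'' argument flagged at the start of Section \ref{n>3}: I claim $X$ lies in a single geodesic if and only if every triple $\{x_i,x_j,x_k\}\subset X$ does. One direction is trivial, since any subset of a set lying in a geodesic again lies in that geodesic. For the converse I would use the fact recorded earlier (``Any two points in $\mathbb{CH}^2$ determine a unique geodesic'') that two distinct points lie on a unique geodesic of $\mathbb{CH}^n$. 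Fix $x_1\neq x_2$ and let $\gamma$ be the geodesic they span. For each further index $k$, the hypothesis supplies a geodesic through $\{x_1,x_2,x_k\}$; as it contains the two distinct points $x_1,x_2$ it must coincide with $\gamma$, so $x_k\in\gamma$. Hence all of $X$ lies in $\gamma$ (the claim being vacuous when $|X|\le 2$). This reduction is the step I expect to carry the conceptual weight; everything after it is either quoted or mechanical.

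Granting the reduction, part (1) is immediate from Proposition \ref{on geodesic}: a triple lies in a geodesic exactly when $A_{ijk}=0$ and $LF_{ijk}=\delta_{ik}$, so $X$ lies in a geodesic exactly when both conditions hold for every triple.

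For part (2) it then suffices to show, for a fixed triple, that $\det MQ_1(\{x_i,x_j,x_k\})=0$ is equivalent to $LF_{ijk}=\delta_{ik}$; combined with the common condition $A_{ijk}=0$ this makes parts (1) and (2) literally the same requirement. Relabel the triple as $\{x_1,x_2,x_3\}$ with $x_1$ the distinguished point and read the entries of the $2\times 2$ matrix $MQ_1$ off from (\ref{MQ}). The diagonal entries are $1-|\widehat{k_{12}}|^2=\delta_{12}^2$ and $1-|\widehat{k_{13}}|^2=\delta_{13}^2$, while the off–diagonal entry $u=1-k_{21}k_{13}/(k_{23}k_{11})$ satisfies $|u|^2=\delta_{12}^2\,LF_{123}^2$ directly from the definition (\ref{L}) of $LF$, and the remaining off–diagonal entry is $\bar u$. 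Therefore
\[
\det MQ_1=\delta_{12}^2\,\delta_{13}^2-|u|^2=\delta_{12}^2\left(\delta_{13}^2-LF_{123}^2\right).
\]
Since the space is irreducible we have $\delta_{12}>0$, so $\det MQ_1=0$ if and only if $\delta_{13}^2=LF_{123}^2$, i.e. $LF_{123}=\delta_{13}$. This is exactly the promised equivalence and completes the passage from part (1) to part (2). (This is the boundary case of the computation recorded after Theorem \ref{n=3}, where positivity of this same $\det MQ$ was identified with the inequality $LF_{123}^2\le\delta_{13}^2$.)

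The only delicate point is the reduction lemma, and there the sole thing to be careful about is the uniqueness of the geodesic through two distinct points; once that is in hand, the remainder is a citation of Proposition \ref{on geodesic} together with the short determinant identity above.
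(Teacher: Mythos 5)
Your proof is correct and follows essentially the same route as the paper's: reduce to triples via Proposition \ref{on geodesic} together with the uniqueness of the geodesic through two distinct points (the paper phrases this concretely by normalizing $x_{1}$ to the origin so the geodesic is a Euclidean line), and then identify the two triple conditions algebraically. Your explicit computation $\det MQ_{1}=\delta_{12}^{2}\bigl(\delta_{13}^{2}-LF_{123}^{2}\bigr)$ correctly fills in the step the paper dismisses as "writing the two differing expressions in terms of kernel functions."
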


\begin{proof}
If three of the points are on a geodesic then Proposition \ref{on geodesic}
insures that we have the two equalities in the first statement. In the other
direction, note that both those equations and the fact of lying on a geodesic
are invariant under automorphisms of $\mathbb{CH}^{n}.$ Hence we can suppose
that $x_{1}$ is at the origin. In that case, by Proposition \ref{on geodesic},
we see that, for any index $j,$ the three points $\left\{  x_{1},x_{2}%
,x_{j}\right\}  $ lie on a geodesic. Because $x_{1}$ is at the origin that
geodesic must be a Euclidean line. Thus $x_{j}$ is on $L,$ the Euclidean line
through $x_{2}$ and the origin. Now note that $j$ was arbitrary; hence all of
the points lie on $L.$ That completes the proof for the first statement. That
the second statement is equivalent to the first can be seen by writing the two
differing expressions in terms of kernel functions.
\end{proof}

\subsubsection{Sets in Real Geodesic Disks or Totally Real Subspaces}

We suppose $X=\left\{  x_{i}\right\}  _{i=1}^{r}\subset\mathbb{CH}^{n}$, $r>3,
$ $H=DA_{n}(X).$ From Section \ref{real geodesic disk} we know that if, for
instance, $A_{123}=0$ then $\left\{  x_{1},x_{2},x_{3}\right\}  $ lies in real
geodesic disk; similarly if $A_{124}=0.$ However we cannot concatenate those
results. Knowing both is not sufficient to insure that all four points lie in
a single real geodesic disk. Consider, for instance, the origin and real
vectors $a,b,$ and $c$ which are mutually orthogonal. However if we control
the dimension of the real span of $\left\{  x_{i}\right\}  $ then we can go
forward; and we can control that dimension using the matrices $MQ$. We will
give a result with that dimension bounded by two but the general pattern will
be clear.

The first statement in the next proposition is a variation on Lemma 2.1 of
work by Burger and Iozzi, \cite{BI}, in which they consider sets that sit
inside totally real subspaces. We will follow their language. More information
about the geometry and properties of totally real subspaces is their paper and
in \cite{Go}.

We will say that a subspace $S$ of $\mathbb{CH}^{n}$ is a \textit{totally real
subspace} of dimension $k$ if it is a totally geodesic submanifold isometric
to $\mathbb{RH}^{k}.$ In particular, if $k=1$ then $S$ is an ordinary
geodesics and for $k=2$ it is a real geodesic disk. As before, the description
is clearer if we use a preliminary automorphism to reduce to the case of $S$
containing the origin of $\mathbb{CH}^{n}=\mathbb{B}^{n}.$ The geodesic
connecting the origin to any other point is a radial line segment. Hence $S$
is the intersection of $\mathbb{B}^{n}$ with a totally real vector subspace of
$\mathbb{C}^{n}$ of dimension $k$; that is, a real vector subspace of
$\mathbb{C}^{n}$ spanned by vectors $\left\{  v_{i}\right\}  _{i=1}^{k}$ with
all $\left\langle v_{i},v_{j}\right\rangle $ real.

\begin{proposition}
\label{real}Suppose $X=\left\{  x_{i}\right\}  _{i=1}^{N}\subset
\mathbb{CH}^{n}$ and $H=DA_{n}(X)$.

\begin{enumerate}
\item If $A_{ijk}=0$ \ for every $i,$ $j,$ $k$ then $X$ is inside a totally
real subspace of $\mathbb{CH}^{n}.$

\item If $A_{ijk}=0$ \ for every $i,$ $j,$ $k$ and, furthermore, for every
$i,$ $j,$ $k,$ $l$ we have, in the notation of (\ref{MQ}),
\[
\det MQ_{1}(\{x_{i},x_{j},x_{k},x_{l}\})=0;
\]
then $X$ is contained in a real geodesic disk.
\end{enumerate}
\end{proposition}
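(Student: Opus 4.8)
The plan is to reduce the whole statement to an assertion about real-linear dependence of vectors in a totally real subspace of $\mathbb{C}^n$, exploiting the fact that every hypothesis and the desired conclusion are invariant under $\operatorname{Aut}(\mathbb{B}^{n})$. First I would apply a preliminary automorphism carrying $x_1$ to the origin; this is harmless because each $A_{ijk}$ and each determinant $\det MQ_1(\{x_i,x_j,x_k,x_l\})$ is automorphism-invariant, and so is the property of lying in a real geodesic disk. With $x_1=0$ the kernel entries collapse, $k_{1j}=k_{11}=1$, so that $A_{1jk}=\arg k_{jk}$; the hypothesis $A_{1jk}=0$ then forces each $k_{jk}$ to be a positive real, hence each inner product $\langle x_j,x_k\rangle$ to be real. (This is exactly the conclusion of the first part of the proposition, now made concrete: the vectors $x_2,\dots,x_N$ span a totally real vector subspace $V\subseteq\mathbb{C}^n$, one on which all pairwise inner products are real.)

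The key structural observation, which I would isolate as a small lemma-within-the-proof, is that inside a totally real subspace complex-linear and real-linear dependence coincide. Since $V\cap iV=\{0\}$, any relation $\sum c_j x_j=0$ with $c_j=a_j+ib_j$ decomposes into the two real relations $\sum a_j x_j=0$ and $\sum b_j x_j=0$; consequently the $x_j$ are $\mathbb{C}$-independent precisely when they are $\mathbb{R}$-independent. This is the bridge connecting the two standing hypotheses.

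Next I would feed the determinant hypothesis through the Lemma preceding the proposition. For any three indices $i,j,k\ge 2$, apply that Lemma to the four-point set $\{x_1,x_i,x_j,x_k\}$ with $x_1=0$ as basepoint: there $MQ_1$ is the Gram matrix $(\langle x_r,x_s\rangle)_{r,s\in\{i,j,k\}}$, so the hypothesis $\det MQ_1(\{x_1,x_i,x_j,x_k\})=0$ says exactly that $x_i,x_j,x_k$ are $\mathbb{C}$-linearly dependent, and by the bridge they are then $\mathbb{R}$-linearly dependent. Thus every three of the vectors $x_2,\dots,x_N$ are real-linearly dependent, which forces $\dim_{\mathbb{R}}V\le 2$ by the standard argument that a set in which every triple is dependent spans a space of real dimension at most two. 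Since $x_1=0\in V$ as well, all of $X$ lies in $\mathbb{B}^n\cap V$ with $V$ totally real of real dimension at most two, i.e. in a real geodesic disk (in the degenerate case $\dim_{\mathbb{R}}V\le 1$ the set lies in a geodesic, which is still contained in a real geodesic disk).

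I expect the main difficulty to be bookkeeping rather than depth. One must note that the preceding Lemma is stated only for a four-point set whose basepoint sits at the origin, so the determinant hypothesis is genuinely exploited only through the four-tuples that contain $x_1$; and one must confirm that $\det MQ_1$ of such a tuple really encodes the $\mathbb{C}$-dependence of the remaining three points, which is precisely the content of that Lemma once $x_1=0$. The only substantive content is the equivalence of complex and real dependence inside a totally real subspace, together with the verification that a totally real subspace of real dimension at most two through the origin is exactly a real geodesic disk; everything else is a clean descent from the per-triple geometric facts to the global containment.
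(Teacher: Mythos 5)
Your proposal is correct and follows essentially the same route as the paper: normalize $x_1$ to the origin, read off from $A_{1jk}=0$ that all $\langle x_j,x_k\rangle$ are real, and then use the vanishing of $\det MQ_1$ on quadruples containing $x_1$ (which is the Gram determinant of the other three vectors) to force every triple to be dependent, hence a span of real dimension at most two. The only cosmetic difference is that you pass through an explicit real-versus-complex dependence lemma for totally real subspaces, where the paper reads real-linear dependence directly off the singular real Gram matrix; both are valid.
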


\begin{proof}
As before, we first use an automorphism to reduce to the case of $x_{1}$ at
the origin. Having done that, $A_{ijk}=0$ implies $k_{jk}$ is real which, in
turn, implies $\left\langle x_{i},x_{j}\right\rangle $ is real. Having that
for all $j,k$ gives the first conclusion. For the second statement note that
if all of $X$ sits in a single geodesic containing the origin then we are
done. Otherwise we can find $x_{i}$ and $x_{j}$ which are linearly
independent. Select any $x_{k}$ and consider the matrix $M=MQ_{1}%
(\{x_{1},x_{i},x_{j},x_{k}\}).$ By the hypothesis on the $A^{\prime}$s the
entries of $M$ are real and by the second part of the hypothesis $\det M=0.$
Hence $x_{k}$ is in the real linear span of $x_{i}$ and $x_{j}.$ Because
$x_{k}$ was arbitrary we have our conclusion.
\end{proof}

\subsubsection{Sets in a Complex Geodesic, Pick Spaces}

Some of the results in this paper have been for general $H,$ others for $H$
which have the CPP. The next result considers an intermediate case, the
assumptions of the result make it automatic that every subspace spanned by
three reproducing kernels has the CPP. We do not know if that is enough to
reach the conclusion of the proposition. On the other hand, if we assume that
$H$ itself has the CPP then the desired conclusions follow easily. In the
actual proposition we make the intermediate assumption that each subspace
spanned by four kernel functions has the CPP.

Suppose $H$ is a RKHS with kernel functions $\left\{  k_{i}\right\}
_{i=1}^{m} $ , $m\geq4,$ and $X=X(H)=\left\{  x_{i}\right\}  _{i=1}^{m}.$ For
any set $\Lambda$ of indices let $H(\Lambda)$ be the subspace spanned by
$\left\{  k_{i}\right\}  _{i\infty\Lambda}.$

\begin{proposition}
\label{is r-model}Suppose that for every four element set of indices,
$\Lambda,$ the space $H(\Lambda)$ has the CPP. Then $H\sim DA_{1}(Y)$ for some
$Y\subset\mathbb{CH}^{1},$ that is, $H$ is an r-Pick space, if and only if for
any $1\leq i<j\,<k\leq m,$ $LF_{ijk}=\delta_{ik},$
\end{proposition}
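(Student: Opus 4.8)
The plan is to prove the two implications separately, with the forward direction being routine and the converse carrying all the weight. For the direction that an r-Pick space satisfies $LF_{ijk}=\delta_{ik}$: if $H\sim DA_1(Y)$ with $Y\subset\mathbb{CH}^1$, then for every triple of indices the three corresponding points of $Y$ lie in the single complex geodesic $\mathbb{CH}^1=\mathbb{B}^1$. Since $LF_{ijk}$ and $\delta_{ik}$ are rescaling invariants read off from the $3\times 3$ Gram submatrix, I can compute them from $DA_1$ of those three points, and Proposition \ref{in complex geodesic} (equivalence of ``lies in a complex geodesic'' with $LF_{123}^2=\delta_{13}^2$) gives $LF_{ijk}=\delta_{ik}$ at once; the complex-geodesic condition is symmetric in the three points, so the ordering $i<j<k$ is immaterial.

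For the converse I first upgrade the local data. Each three-element subspace lies in some four-element $H(\Lambda)$ and hence inherits its CPP, while $LF_{ijk}=\delta_{ik}$ forces, through Proposition \ref{in complex geodesic} and Corollary \ref{disk}, that every triple is r-Pick and sits in a copy of $\mathbb{CH}^1$. Next I show each four-element $H(\Lambda)$ is itself r-Pick: by hypothesis it has the CPP, so by Theorem \ref{embed} it equals $DA_n(X_\Lambda)$; placing one of its four points at the origin (normal form) and using that each triple through the origin lies in a complex line through $0$ forces the other three points to be complex-scalar multiples of a single vector, so all four lie in one complex geodesic and $H(\Lambda)\sim DA_1(Y_\Lambda)$ with $Y_\Lambda\subset\mathbb{CH}^1$.

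Now I build a single embedding. Fix $\{x_1,x_2,x_3\}$ and embed it in $\mathbb{D}$ in normal form as $\{0,s,w_3\}$ via Corollary \ref{disk}. For each further index $r$, the r-Pick space $H(\{1,2,3,r\})$ has an intrinsic embedding into $\mathbb{CH}^1$ whose restriction to $\{1,2,3\}$ realizes the same three-point space as $\{0,s,w_3\}$ and is therefore congruent to it by a \emph{holomorphic} isometry (Theorem \ref{reduction}). Since a holomorphic automorphism of $\mathbb{D}$ fixing three distinct points is the identity, that aligning isometry is unique, so I may unambiguously set $\Phi(x_r)=y_r$ to be the image of the fourth point. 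This makes all invariants of $H$ among indices in $\{1,2,3,r\}$ correct; in particular $\rho(0,y_r)=\delta_{1r}$ and every pair and triple inside $\{1,2,3,r\}$ matches. The crux is the consistency check for indices $i,j\ge 4$, and this is exactly where the four-point hypothesis, not merely the three-point one, is used. I consider the r-Pick space $H(\{1,2,i,j\})$, embed it as $\{z_1,z_2,z_i,z_j\}\subset\mathbb{CH}^1$, and align its sub-triple via a holomorphic isometry $\Psi$ with $\Psi z_1=y_1$, $\Psi z_2=y_2$, $\Psi z_i=y_i$ (possible because $\{z_1,z_2,z_i\}$ and $\{y_1,y_2,y_i\}$ both realize $H(\{1,2,i\})$). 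I then claim $\Psi z_j=y_j$: both points have pseudohyperbolic distance $\delta_{1j}$ from $y_1$ and $\delta_{2j}$ from $y_2$, and both have the same signed angular invariant $A_{12j}$ relative to $y_1,y_2$ (for $\Psi z_j$ because $\Psi$ is holomorphic and so preserves $A$, for $y_j$ by its definition through $H(\{1,2,3,j\})$). As in the proof of Proposition \ref{metric embed}, the two pseudohyperbolic circles meet in at most two points, mirror images across the geodesic through $y_1,y_2$ and distinguished by the sign of $A$; equality of the signed $A_{12j}$ forces $\Psi z_j=y_j$, whence $\rho(y_i,y_j)=\rho(z_i,z_j)=\delta_{ij}$ and the angular invariants $A_{1ij}$ agree as well.

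The main obstacle is precisely this gluing step: three-point agreement alone leaves a reflection ambiguity, and Quiggen's example shows that locally coherent three-dimensional data need not cohere globally, so it is the r-Pick property of the four-point subspaces together with the sign information carried by the angular invariant $A$ that removes the ambiguity. Having matched every $\delta_{ij}$ and every $A_{1rs}$ — that is, the rescaling-invariant data set $J(X)$ of Section \ref{describing} — I conclude that $H$ and $DA_1(\Phi(X))$ determine the same rescaled Gram matrix, so $H\sim DA_1(\Phi(X))$ and $H$ is r-Pick. One should also record the degenerate routing where some intermediate triple is collinear; since every alignment above is chosen holomorphic and $A$ is preserved, the argument goes through unchanged, and if \emph{all} $A_{ijk}$ vanish then Proposition \ref{on geodesic} and Proposition \ref{geodesic} place $X$ on a single geodesic inside $\mathbb{CH}^1$ directly.
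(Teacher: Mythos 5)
Your proposal is correct, and the converse direction is carried out by a genuinely different mechanism than the paper's. The paper also reduces to the four\nobreakdash-point subspaces $H(\{1,2,r,s\})$ and also uses the observation that all their triples lying in complex geodesics forces each four\nobreakdash-point realization into a single complex geodesic; but it then defines the embedding analytically, setting $y_r=M^{rs}(x_r)$ where $M^{rs}$ is the norm-one multiplier extension (furnished by the CPP of the four\nobreakdash-point subspace) of the canonical two\nobreakdash-point extremal multiplier $M$ on $H(\{1,2\})$. Well\nobreakdash-definedness of $y_r$ is then immediate from the explicit formula (\ref{multiplier formula}), and the Gram-matrix consistency $k_{rs}=(1-\bar cd)^{-1}$ is settled by the uniqueness clause of Theorem \ref{n=3} applied to the three\nobreakdash-point representations $H(\{1,2,r\})\sim DA_1(\{0,\delta_{12},\cdot\})$. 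You instead glue the one\nobreakdash-dimensional realizations geometrically: unique holomorphic alignments via Theorem \ref{reduction}, with the reflection ambiguity in the circle-intersection argument of Proposition \ref{metric embed} resolved by the sign of the angular invariant. These are two faces of the same uniqueness fact --- the paper's appeal to Theorem \ref{n=3} silently encodes exactly the signed $A_{12j}$ information you track by hand --- but your version makes the geometric content (and its connection to the failure mode in Quiggen's example) explicit, while the paper's version gets well\nobreakdash-definedness essentially for free from the multiplier formula. One small point worth writing out in your gluing step: when $A_{12j}=0$ the sign no longer distinguishes the two candidate intersection points, but in that case $y_j$ lies on the geodesic through $y_1,y_2$, the two pseudohyperbolic circles are tangent, and the two candidates coincide, so there is still no ambiguity.
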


\begin{proof}
[Proof of a simpler result]If $H$ is an r-Pick space then we can apply
Proposition \ref{in complex geodesic} to all the three dimensional subspaces
of $H$ and obtain the condition on the $LF$'s. In the other direction, if we
had the stronger assumption that $H$ has the CPP then we could start with
$H\sim DA_{n}(Y)$ for some $Y=\left\{  y_{i}\right\}  $ in $\mathbb{CH}^{n}.$
Then, again by Proposition \ref{in complex geodesic}, we would see that every
three element subset of $Y$ lies in a complex geodesic. However If two complex
geodesics share a pair of points then they are the same. Hence $Y$ actually
sits in a single geodesic. The rescaling induced by the automorphism placing
$Y$ on the $z_{1}$ axis produces the required Pick space,
\end{proof}

\begin{proof}
Without loss of generality $H$ is basepoint normalized with $x_{1}$ as
basepoint. From our analysis of multipliers on two dimensional spaces in
Section \ref{two} we know that there is a unique multiplier\ $M$ on
$H(\left\{  1,2\right\}  )$ which is of norm one and has $M(x_{1})=0$ and
$M(x_{2})=\delta_{H(\left\{  1,2\right\}  )}(x_{1},x_{2})=\delta_{H}%
(x_{1},x_{2}).$ Suppose $2<r\neq s\leq m.$ By the hypothesis $H(\left\{
1,2,r,s\right\}  )$ has the CPP. Hence there is a norm one extension of $M$ to
a multiplier $M^{rs}$ on $H(\left\{  1,2,r,s\right\}  ).$ Furthermore $M^{rs}$
must be given by (\ref{multiplier formula}) which shows that $M^{rs}$ is
unique and also shows that $M^{rs}(x_{r})$ does not depend on the index $s.$
Define the set $Y=\left\{  y_{i}\right\}  _{i=1}^{m}\subset\mathbb{D=CH}^{1}$
by $y_{1}=M(x_{1})=0,$ $y_{2}=$ $M(x_{2})=\delta_{12}$ and, for $r>2,$
$y_{r}=M^{rs}(x_{r})$ (which we just noted does not depend on $s).$

To complete the proof we will show that $H\sim DA_{1}(Y)$. We will establish
that by showing that the Gram matrix $K(H)=\left(  k_{rs}\right)  $ equals the
Gram matrix $G(DA_{1}(Y))=J=\left(  j_{js}\right)  .$ By construction both
matrices have only $1$'s in their first row and first column. Now select $r,s
$ with $2\leq r,s\leq m;$ we want to show $k_{rs}=j_{rs}.$ Gram matrix
elements are stable under passage to subspaces spanned by reproducing kernels,
so we can study the gram matrix element $k_{rs}$ in the context of the Hilbert
space $H(\left\{  1,2,r,s\right\}  )$ (with the obvious modifications in
interpretation if $r=s$). We assumed that the four dimensional space
$H(\left\{  1,2,r,s\right\}  )$ has the CPP, and hence we have $H(\left\{
1,2,r,s\right\}  )\sim DA_{3}(\left\{  a,b,c,d\right\}  )$ for $\left\{
a,b,c,d\right\}  \subset\mathbb{CH}^{3}.$ The argument we gave in the "Proof
of a simpler result" shows that, in fact, $\left\{  a,b,c,d\right\}  $ lies in
a complex geodesic. Hence, using an automorphism we can reduce to the case
$\left\{  a,b,c,d\right\}  \subset\mathbb{D=CH}^{1}$, $a=0,$ $b=\delta_{12}.$
The Gram matrix entries of $H(\left\{  1,2,r,s\right\}  ),$ computed in the
space $H,$ match the Gram matrix entries of $DA_{1}(\left\{  a,b,c,d\right\}
)$ computed by regarding that space as a subspace of $DA_{1}$. In particular
$k_{rs}=\left(  1-\bar{c}d\right)  ^{-1}.$ If we can establish that $y_{r}=c$
and $y_{s}=d$ we will have the desired match. The same argument is used for
both equalities and we will just look at the first.

The spaces $H_{r}=H(\left\{  1,2,r\right\}  )$ and $DA_{1}(\left\{
a,b,c\right\}  )=.DA_{1}(\left\{  0,\delta_{12},c\right\}  )$ are
corresponding subspaces of the two four dimensional spaces we just looked at,
and hence $H_{r}\sim DA_{1}(\left\{  0,\delta_{12},c\right\}  )..$We know from
Theorem \ref{n=3} that this uniquely determines $c.$ Consider now any three
dimensional space $J$ with $X(J)=\left\{  j_{1}.j_{2},j_{3}\right\}  $ and for
which we know $J\sim DA_{1}(\left\{  \alpha,\beta,\gamma\right\}  )$ for some
$\left\{  \alpha,\beta,\gamma\right\}  \subset\mathbb{D}$. Let $N$ be the
unique multiplier on $J$ of norm one with $N(j_{1})=0\ $and $N(j_{2}%
)=\delta_{12}.$ By noting formula (\ref{multiplier formula}) and also looking
at the proof of Theorem \ref{n=3}, we see that that%
\[
J\sim DA_{1}(\left\{  N(j_{1}),N(j_{2}),N(j_{3})\right\}  )=DA_{1}(\left\{
0,\delta_{12},N(j_{3})\right\}  ).
\]
We now compare these facts. If $J$ is $H_{r}$ then $j_{3}$ is $x_{r}$ and $N$
is $M^{rs}$ restricted to $H_{r},$ Hence $N(j_{3})=M^{rs}(x_{r})=y_{r}.$ Thus
$H_{r}\sim DA_{1}(\left\{  0,\delta_{12},y_{r}\right\}  ).$ Comparing this
with the earlier unique representation in that form we conclude $y_{r}=c,$
which is what we needed to finish.
\end{proof}

The argument in the proof gives a type of description of r-Pick spaces.

\begin{corollary}
Suppose $H$ is an r-Pick space with $X(H)=\left\{  x_{i}\right\}  _{i=1}^{m},$
and $M$ is a multiplier on $H$ of norm one which, and for some $i,j,$ has
$\ \rho(M(x_{i}),M(x_{j}))=\delta_{H}(x_{i},x_{k}).$ Then $H\simeq
DA_{1}(\left\{  M(x_{i})\right\}  ).$
\end{corollary}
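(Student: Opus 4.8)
The plan is to realize the given multiplier $M$ as the restriction of a genuine holomorphic self-map of the disk and then to invoke the rigidity half of the Schwarz--Pick lemma. First I would use the hypothesis that $H$ is an r-Pick space to write $H\sim DA_{1}(Y)$ for a finite $Y=\{y_{k}\}\subset\mathbb{D}$, identifying $X(H)$ with $Y$ so that $x_{k}$ corresponds to $y_{k}$. Because the multiplier algebra is a rescaling invariant (Section \ref{invariance}) and $\delta_{DA_{1}}=\rho$, the norm-one multiplier $M$ on $H$ becomes a function $m$ of multiplier norm one on $DA_{1}(Y)$ with $m(y_{k})=M(x_{k})$, while $\delta_{H}(x_{i},x_{j})=\rho(y_{i},y_{j})$ for all $i,j$. (I read the hypothesis as $\rho(M(x_{i}),M(x_{j}))=\delta_{H}(x_{i},x_{j})$ for some distinct $i,j$.)

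Next I would extend $m$ off $Y$. Since $DA_{1}=H^{2}$ has the complete Pick property, the multiplier norm of $m$ on the subspace $DA_{1}(Y)$ coincides with the quotient norm inherited from $\operatorname*{Mult}(H^{2})=H^{\infty}$, exactly the phenomenon discussed in Section \ref{larger spaces}. Hence classical Nevanlinna--Pick interpolation yields a holomorphic $\phi$ on $\mathbb{D}$ with $\left\Vert \phi\right\Vert _{\infty}\leq 1$ and $\phi(y_{k})=m(y_{k})=M(x_{k})$ for every $k$. The equality $\rho(M(x_{i}),M(x_{j}))=\delta_{H}(x_{i},x_{j})>0$, positive by irreducibility, forces $m$, and hence $\phi$, to be nonconstant, so by the maximum modulus principle $\phi$ is a holomorphic self-map of the \emph{open} disk.

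Now I would apply Schwarz--Pick: for such $\phi$ one has $\rho(\phi(a),\phi(b))\leq\rho(a,b)$, with equality for a single pair $a\neq b$ precisely when $\phi\in\operatorname*{Aut}\left(\mathbb{B}^{1}\right)$. Here
\[
\rho(\phi(y_{i}),\phi(y_{j}))=\rho(M(x_{i}),M(x_{j}))=\delta_{H}(x_{i},x_{j})=\rho(y_{i},y_{j}),
\]
so equality holds and $\phi$ is an automorphism of the disk. Consequently $\{M(x_{k})\}=\phi(Y)$ is congruent to $Y$, and since automorphisms of the ball induce rescalings (Theorem \ref{reduction}) we obtain $DA_{1}(\{M(x_{k})\})\sim DA_{1}(Y)\sim H$, which is the assertion.

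The main obstacle is the passage from the abstract norm-one multiplier $M$ to a contractive holomorphic $\phi$ on all of $\mathbb{D}$ interpolating the prescribed values: this is exactly where the Pick property of $H^{2}$ is used, guaranteeing both the existence of the extension with $\left\Vert \phi\right\Vert _{\infty}\leq 1$ and that nonconstancy keeps $\phi$ a self-map of the open disk; the remainder is just the rigidity clause of Schwarz--Pick. An alternative that stays entirely inside the paper's framework is to post-compose $M$ with the M\"obius map carrying $M(x_{i})$ to $0$ and $M(x_{j})$ to $\delta_{ij}$. Von Neumann's inequality keeps the result a norm-one multiplier, uniqueness of the extremal multiplier (Proposition \ref{same delta} together with formula (\ref{multiplier formula})) identifies it with the canonical multiplier used in the proof of Proposition \ref{is r-model}, and that proof then yields $H\sim DA_{1}(\{M(x_{k})\})$ after undoing the M\"obius normalization.
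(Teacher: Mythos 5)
Your argument is correct, but your primary route differs from the paper's. The paper derives this corollary directly from the machinery of Proposition \ref{is r-model}: one post-composes $M$ with the M\"obius map sending $M(x_{i})$ to $0$ and $M(x_{j})$ to $\delta_{ij}$ (von Neumann's inequality preserving the norm bound), identifies the result with the unique extremal multiplier via Proposition \ref{same delta} and formula (\ref{multiplier formula}), and then reads off $H\sim DA_{1}(\{M(x_{k})\})$ from the Gram-matrix-matching argument of that proof --- exactly the ``alternative'' you sketch in your last paragraph. Your main argument instead extends $m$ to a contractive holomorphic self-map $\phi$ of $\mathbb{D}$ by classical Nevanlinna--Pick interpolation and invokes the rigidity clause of Schwarz--Pick: equality $\rho(\phi(y_{i}),\phi(y_{j}))=\rho(y_{i},y_{j})$ at one pair of distinct points forces $\phi\in\operatorname*{Aut}\left(\mathbb{B}^{1}\right)$, so $\{M(x_{k})\}=\phi(Y)$ is congruent to $Y$ and the conclusion follows from Theorem \ref{reduction}. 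This is shorter and makes the mechanism transparent (the hypothesis forces $M$ to be the restriction of a disk automorphism), at the cost of stepping outside the paper's extremal-multiplier framework; the paper's route reuses machinery it has already built and needs no function theory beyond what is in Section \ref{two}. You were also right to read the hypothesis $\delta_{H}(x_{i},x_{k})$ as a typo for $\delta_{H}(x_{i},x_{j})$, and your remarks that irreducibility gives $\delta_{H}(x_{i},x_{j})>0$ (so $\phi$ is nonconstant, hence a self-map of the open disk) and that injectivity of $\phi$ keeps the points $M(x_{k})$ distinct close the only small gaps in the Schwarz--Pick route.
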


\section{Function Spaces on Trees\label{tree spaces}}

\subsection{Defining the Spaces}

In this section we study a class of Hilbert spaces $H$ of functions on trees
$\mathcal{T}$. Many natural examples of this type of space are infinite
dimensional and certainly some of what we do extends to that setting, but we
continue to assume our $H$ are finite dimensional.

We start with a rooted tree $\mathcal{T}$, a connected loopless graph with a
root vertex $o.$ For vertices $x,y\in\mathcal{T}$, we let $\left[  x,y\right]
$ denote the non-overlapping path connecting $x$ and $y.$ We will be informal
about whether that path consists of vertices, edges, or both. If $w,x,y,z$ are
vertices we will write $w<x$ if $w\in\lbrack o,x)$, write $x\wedge y=z$ if
$z=\sup\{[o,x]\cap\left[  o,y\right]  \},$ and denote the immediate
predecessor of $y,$ $\sup\left\{  [o,y)\right\}  ,$ by $y^{-}.$

One way to form a RKHS $H$ of functions on $\mathcal{T}$ with properties
related to the structure of $\mathcal{T}$ is suggested by the metaphor that
$"<"$\ reflects a flow of time or a flow of influence. With that in mind, we
define kernel functions $\{k_{x}\}$ with the value $k_{xy}$, $x,y\in T$,
determined by the "shared past" of $x$ and $y.$ Explicitly, we select a
function $\Omega$ defined on $\mathcal{T}$ which satisfies, for $x,y\in T,$%
\begin{equation}
\text{ }\Omega(o)=1,\text{ }\Omega(y)<\Omega(x)\text{ if }y<x.
\label{omega properties}%
\end{equation}
and define $k=k_{\Omega}$ by%
\begin{equation}
k_{xy}=k_{x\wedge y\text{ }x\wedge y}=\Omega(x\wedge y). \label{omega}%
\end{equation}
These conditions insure that $k_{xy}>0$ and that $k$ satisfies the Cauchy
Schwarz inequality, $k_{xy}^{2}\leq k_{xx}k_{yy}.$

In fact this definition insures that $k_{xy}$ is the reproducing kernel for a
space $H$ and that $H$ has the CPP. We will establish both facts by
explicitly\ constructing a map $\Phi$ of $\mathcal{T}$ into $\mathbb{CH}^{n}.$
The entries of the Gram matrix are real and hence, in the language of Section
\ref{conjugations}, the space is equal to its conjugate, $H=\overline{H};$
and, also, $\Phi(\mathcal{T)}=\overline{\Phi(\mathcal{T)}}\subset
\mathbb{RH}^{n}\subset\mathbb{CH}^{n}.$ Furthermore, if the kernel function
satisfies (\ref{omega}) then it also satisfies the weaker condition
\begin{equation}
\text{if }x<y\text{ then }k_{xy}=k_{xx}. \label{less than}%
\end{equation}
That condition is reflected in the shape of $\Phi(\mathcal{T)}$; triples of
points in $\Phi(\mathcal{T)}$ have the type of orthogonality described in
Proposition \ref{orthogonality2}.

In addition, independently of the construction of $\Phi,$ we will use the
algebraic structure of $k_{xy}$ to show that it is a reproducing kernel and
has the CPP.

\begin{theorem}
\label{embed tree}Let $K$ be the kernel function for $DA_{n}.$ If
$\mathcal{T}$ and $\Omega$ are as described above, then:

\begin{enumerate}
\item The function $k_{xy}$ in (\ref{omega}) is the reproducing kernel for a
RKHS, $H=H(\mathcal{T}$,$\Omega),$ of functions on $\mathcal{T}$.

\item The space $H$ has the CPP.

\item There is a map $\Phi_{\Omega}:\mathcal{T}\rightarrow\mathbb{B}^{n}$ with
$\Phi_{\Omega}(o)=0,$ and for all $x,y\in\mathcal{T}$, $\ k_{xy}%
=K_{\Phi_{\Omega}(x)\Phi_{\Omega}(y)}=\Omega(x\wedge y).$ Thus
$H=H(\mathcal{T}$,$\Omega)=DA_{n}(\Phi_{\Omega}(\mathcal{T))}$.
\end{enumerate}
\end{theorem}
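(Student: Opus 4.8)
The plan is to establish statement (3) by an explicit construction and then read off (1) and (2) as corollaries. The guiding observation is that since $k_{xy} = \Omega(x \wedge y)$ depends only on the meet, the inner products of the points I am looking for should accumulate along the common initial segment of the two root-paths. This points to assigning an increment to each edge of $\mathcal{T}$ and defining $\Phi_{\Omega}(x)$ as the sum of the increments along the path $[o,x]$.

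Concretely, I would index an orthonormal basis $\{e_v\}$ of $\mathbb{C}^n$ by the non-root vertices of $\mathcal{T}$, so that $n$ is their number. To each non-root vertex $v$ I attach the positive scalar $a_v$ with
\[
a_v^2 = \frac{1}{\Omega(v^-)} - \frac{1}{\Omega(v)},
\]
which is strictly positive precisely because of the monotonicity $\Omega(v^-) < \Omega(v)$ demanded in (\ref{omega properties}). Setting $\Phi_{\Omega}(o) = 0$ and $\Phi_{\Omega}(x) = \sum_{o < v \leq x} a_v e_v$, a telescoping sum along $[o,z]$ gives $\sum_{o < v \leq z} a_v^2 = 1 - 1/\Omega(z)$; in particular $|\Phi_{\Omega}(x)|^2 = 1 - 1/\Omega(x) < 1$, so the image lies in $\mathbb{B}^n$. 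Because the common vertices of $[o,x]$ and $[o,y]$ are exactly the $v \leq x \wedge y$, the two sums share support only on $[o, x\wedge y]$, whence $\langle \Phi_{\Omega}(y), \Phi_{\Omega}(x)\rangle = 1 - 1/\Omega(x\wedge y)$. Inserting this into the $DA_n$ kernel $K_{zw} = (1 - \langle w, z\rangle)^{-1}$ yields $K_{\Phi_{\Omega}(x)\Phi_{\Omega}(y)} = \Omega(x \wedge y) = k_{xy}$, which is (3). Since the $a_v$ are real the image sits in the totally real $\mathbb{RH}^n \subset \mathbb{CH}^n$, matching the earlier remark that $H = \overline{H}$, and the relation $\Phi_{\Omega}(y) - \Phi_{\Omega}(x) \perp \Phi_{\Omega}(x)$ for $x < y$ realizes the orthogonality of Proposition \ref{orthogonality2} forced by (\ref{less than}).

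Statements (1) and (2) should then follow with no extra work. The map $\Phi_{\Omega}$ is injective, since $\Phi_{\Omega}(x)$ recovers the support $[o,x]$ and hence $x$, so I may identify $\mathcal{T}$ with the distinct points $\Phi_{\Omega}(\mathcal{T})$; then $(k_{xy})$ is the Gram matrix of the $DA_n$ kernels at those points, which is positive definite, so $k_{xy}$ is a genuine reproducing kernel and $H = DA_n(\Phi_{\Omega}(\mathcal{T}))$ is a RKHS on $\mathcal{T}$, giving (1). Statement (2) is immediate, because a subspace of $DA_n$ spanned by a subset of its reproducing kernels inherits the complete Pick property, as recorded just before Theorem \ref{embed}.

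I do not anticipate a real obstacle; the only points requiring care are the bookkeeping that forces the inner product to telescope and the strict inequality $a_v^2 > 0$ that keeps the points off the sphere. Should one instead want the construction-free verification that $k$ is a kernel with the CPP that the text promises, the natural route is to check directly that every McCullough--Quiggen matrix $MQ_r$ of (\ref{MQ}) is positive semidefinite and then invoke Theorem \ref{MQ theorem}; but the geometric construction above is more transparent and settles all three parts at once.
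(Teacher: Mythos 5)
Your proposal is correct and is essentially the paper's own argument: your increments $a_v$ with $a_v^2=\Omega(v^-)^{-1}-\Omega(v)^{-1}$ are exactly the paper's coefficients $c(v)$ in its spine construction, the telescoping of norms and the disjoint-support computation of $\langle\Phi_\Omega(y),\Phi_\Omega(x)\rangle$ reproduce the paper's verification, and deducing (1) and (2) from (3) via general facts about $DA_n(X)$ is precisely how the paper proceeds. The only thing you omit is the paper's second, independent proof of (1) and (2) by summation by parts (its Lemma on $\sum h(x\wedge y)f(x)\overline{f(y)}$ applied to $h=\Omega$ and $h=1-1/\Omega$), which is supplementary and not needed for the theorem.
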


\begin{proof}
First we will construct the map $\Phi_{\Omega}$ required for \textbf{(3)}.
Once we have that then statements \textbf{(1)} and \textbf{(2)} follow from
general facts about spaces $DA_{n}(X).$ We will then give an alternate proof
of \textbf{(1)} and \textbf{(2)} using a summation by parts formula for kernel
functions of the form described by (\ref{omega properties}) and (\ref{omega}).

To construct $\Phi$ we first construct the \textit{spine of }$T,$
$\operatorname*{Sp}(T),$ a set of strings of orthonormal vectors in
$\mathbb{B}^{n}$ which is indexed by elements of $T.$ Let $E=\left\{
e_{x}\right\}  _{x\in T}$ be a set of orthonormal vectors in $\mathbb{B}^{n}$.
For each $y\in T$ let $\left\{  o,y_{1},y_{2},...,y_{)}\right\}  $ be the
ordered string of vertices in the interval $\left[  o,y\right]  $. Let
$\operatorname*{st}(y)$ be the corresponding ordered string of elements of
$E,$ $\operatorname*{st}(y)=\{e_{0},e_{y_{1}},...,e_{y}\}.$ Set
$\operatorname*{Sp}(T)=\left\{  \operatorname*{st}(y):y\in T\right\}  .$

Using $\operatorname*{Sp}(T)$ we construct $\Phi$ by selecting appropriate
positive scalars $\left\{  c_{x}\right\}  _{x\in T}$ and setting
\begin{equation}
\Phi(y)=\sum_{w\in\operatorname*{st}(y)}c(w)e_{w}. \label{def phi}%
\end{equation}
We define the coefficients $\left\{  c(w)\right\}  _{w\in T}$ by induction on
the parameter $n(w)$, the number of edges in the path $\left[  o,w\right]  .$
The only $w$ with $n(w)=0$ is $w=o$ and we begin by setting $c(o)=0;$ that is,
we map the root vertex to the origin. Suppose now we have defined the
$\left\{  c(w)\right\}  $ for all $w$ with $n(w)\leq N.$ Select $z$ with
$n(z)=N+1.$ We have $n(z^{-})=N$, hence by our induction hypotheses and the
definition of $\Phi$, $\Phi(z^{-})$ is already defined. Set $\Phi
(z)=\Phi(z^{-})+c(z)e_{z}$ with $c(z)$ the positive number which we now
define. In order to have
\begin{equation}
k_{zz}=K_{\Phi(z)\Phi(z)}=\Omega(z) \label{rnel}%
\end{equation}
we need%
\begin{equation}
\left\langle \Phi(z),\Phi(z)\right\rangle =1-\frac{1}{\Omega(z)}. \label{only}%
\end{equation}
By our construction of the string $\operatorname*{st}(z),$ the $e_{w}$
corresponding to $w\in\lbrack o,z)$ are orthogonal to $e_{z}$. Hence we want%
\[
1-\frac{1}{\Omega(z)}=\left\Vert \Phi(z)\right\Vert ^{2}=\left\Vert \Phi
(z^{-})\right\Vert ^{2}+\left\Vert c(z)e_{z}\right\Vert ^{2}=1-\frac{1}%
{\Omega(z^{-})}+c(z)^{2}.
\]
Thus we want $c(z)^{2}=\Omega(z^{-})^{-1}-\Omega(z)^{-1}.$ Because $\Omega$ is
increasing that quantity is positive. Hence we can select $c(z)>0$ and
complete the definition of $\Phi(z).$ There is no obstacle in repeating this
process through the set of $z$ with $n(z)=N+1$ to complete the inductive step
in the definition. Thus we have $\Phi(z)$ defined for all $z, $ Note that the
construction insures that
\begin{equation}
1-\frac{1}{\Omega(z)}=\left\Vert \Phi(z)\right\Vert ^{2} \label{every}%
\end{equation}
holds for every $z.$

We now check that for any $z,w\in$ $T$ we have $k_{wz}=K_{\Phi(w)\Phi
(z)}=\Omega(w\wedge z)$ Taking note of the formula for $K$ it suffices to show
that
\[
\left\langle \Phi(w),\Phi(z)\right\rangle =\left\langle \Phi(w\wedge
z),\Phi(w\wedge z)\right\rangle =1-\frac{1}{\Omega(w\wedge z)}.
\]
The structure of the tree insures that $w\wedge z$ is a point on the geodesic
$\left[  o,w\right]  $ and on the geodesic $[o.z].$ Taking note of the
orthogonality relations in $\operatorname*{st}(w)$ and $\operatorname*{st}(z)$
and the formula (\ref{def phi}) we see that $\Phi(w)=\Phi(w\wedge z)+r(w,z)$
with $r(w,z)\perp\Phi(w)$ and also $\Phi(z)=\Phi(w\wedge z)+t(z,w)$ with
$t(z,w)\perp\Phi(z).$ Furthermore, taking note of the definition of $w\wedge
z$, the substrings $\operatorname*{st}(w)\smallsetminus\operatorname*{st}%
(w\wedge z)$ and $\operatorname*{st}(z)\smallsetminus\operatorname*{st}%
(w\wedge z)$ are disjoint. That implies $r(w,z)\perp t(z,w).$ Combining these
facts gives the first equality in the previous display follows. To obtain the
second equality follows from (\ref{every}).

That completes the proof of \textbf{(3) }which, as we noted, implies
\textbf{(1)}\ and \textbf{(2). }We now give an independent proof of
\textbf{(1)}\ and \textbf{(2)} using a summation by parts formula for bilinear
forms with kernel functions such as $\left\{  k_{xy}\right\}  $ which are
functions of $x\wedge y.$

It is convenient to introduce several operators on functions defined
on\ $\mathcal{T}$ . For $g$ a function on $\mathcal{T}$, $a,b,c\in\mathcal{T}$
we set
\begin{align*}
Ig(\alpha)  &  =\sum_{\tau\leq\alpha}g(\tau),\text{ }\\
I^{\ast}g(b)  &  =\sum_{\tau\geq\beta}g(\tau)\text{\ }\\
Dg(c)  &  =g(c)-g(c^{-})\text{ if }c\neq o,\text{ }Dg(o)=0,\text{ }%
\end{align*}
The notation follows the usage in \cite{ARS02} where $I$ as a discrete model
for integration and the operator $I^{\ast}$ is the adjoint of $I$ with respect
to the pairing of $\ell^{2}(\mathcal{T)}$. The operator $D,$ the difference
operator, is the one sided inverse to $I.$

The following summation by parts formula is Lemma 3 of \cite{ARS10}. It is
proved there by several lines of straightforward computation.

\begin{lemma}
[Summation by parts.]\label{parts}For any functions $h$ and $f$ defined on
$\mathcal{T}$ we have
\[
\sum_{x,y\in\mathcal{T}}h(x\mathcal{\wedge}y)f(x)\overline{f(y)}%
=h(o)\left\vert I^{\ast}f(o)\right\vert ^{2}+\sum_{z\in\mathcal{T}%
}(h(z)-h(z^{-}))\left\vert I^{\ast}f(z)\right\vert ^{2}.
\]

\end{lemma}

To establish \textbf{(1)} we need to know that for any function $f$ defined
$\mathcal{T}$ we have
\[
\sum_{x,y\in\mathcal{T}}k_{xy}f(x)\overline{f(y)}\geq0
\]
with equality only if $f$ is the zero function. We apply the lemma with
$h=\Omega.$ Because $\Omega$ is increasing the term $(h(z)-h(z^{-}))$ is
positive and hence the resulting bilinear form is positive definite. Because
$k$ is defined in terms of $\Omega$ by (\ref{omega}) this shows that $k$
defines a positive definite form, and hence\ is the kernel function of some
Hilbert space $H,$ Furthermore, by Theorem 7.28 of \cite{AM}, to show $H$ has
the CPP it suffices to show that, in addition, $1-1/k_{xy}$ generates a
positive bilinear form. That follows from the same lemma, applied this time to
the function $h=1-1/\Omega$ which is increasing because $\Omega$ is.
\end{proof}

\begin{corollary}
[Infinite divisibility of kernel functions]If $k_{xy}$ and $\Omega$ are as in
the previous theorem, and if $\Lambda$ is any strictly increasing function
with $\Lambda(1)=1$ then $k_{xy}^{\Lambda}=\Lambda(k_{xy})$, the kernel
function associated with $\Omega_{\Lambda}=\Lambda(\Omega)$ through
(\ref{omega}), is the kernel function of the RKHS with CPP. In particular, for
$0<\lambda<\infty,$ $k_{xy}^{\lambda}$ is such a kernel.
\end{corollary}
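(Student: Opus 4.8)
The plan is to reduce the corollary directly to the previous theorem by checking that the transformed weight $\Omega_{\Lambda}=\Lambda\circ\Omega$ is again an admissible weight. The first step is to record the identity that makes this reduction possible. Since $k_{xy}=\Omega(x\wedge y)$ by (\ref{omega}), applying $\Lambda$ gives
\[
k_{xy}^{\Lambda}=\Lambda(k_{xy})=\Lambda(\Omega(x\wedge y))=(\Lambda\circ\Omega)(x\wedge y)=\Omega_{\Lambda}(x\wedge y).
\]
Thus $k^{\Lambda}$ is exactly the function built from the weight $\Omega_{\Lambda}$ through formula (\ref{omega}); there is nothing further to prove about the claimed form of $k^{\Lambda}$ beyond this substitution.

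The second step is to verify that $\Omega_{\Lambda}$ satisfies the two requirements (\ref{omega properties}) imposed on weights. For the normalization, $\Omega_{\Lambda}(o)=\Lambda(\Omega(o))=\Lambda(1)=1$. For the monotonicity, suppose $y<x$; then $\Omega(y)<\Omega(x)$ because $\Omega$ satisfies (\ref{omega properties}), and since $\Lambda$ is strictly increasing we get $\Lambda(\Omega(y))<\Lambda(\Omega(x))$, i.e.\ $\Omega_{\Lambda}(y)<\Omega_{\Lambda}(x)$. Hence $\Omega_{\Lambda}$ is an admissible weight, and Theorem \ref{embed tree} applied to it shows at once that $k^{\Lambda}=\Omega_{\Lambda}(x\wedge y)$ is the reproducing kernel of a RKHS with the CPP. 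Equivalently, one could re-run the summation-by-parts argument of the theorem: what that argument needs is only that both $\Omega_{\Lambda}$ and $1-1/\Omega_{\Lambda}$ be increasing along the tree, and both follow from $\Omega_{\Lambda}$ being increasing together with $t\mapsto 1-1/t$ increasing on $[1,\infty)$.

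The final step handles the ``in particular'' assertion by specializing $\Lambda$. Taking $\Lambda(t)=t^{\lambda}$ with $0<\lambda<\infty$, we have $\Lambda(1)=1$ and $\Lambda$ is strictly increasing on $[1,\infty)\supset\operatorname{range}(\Omega)$, so the first part applies and $k_{xy}^{\lambda}=\Omega(x\wedge y)^{\lambda}$ is a CPP kernel. I do not expect any genuine obstacle here: the entire content is the observation that the hypotheses (\ref{omega properties}) are stable under post-composition with any strictly increasing $\Lambda$ fixing $1$, so the real difficulty of the result lies entirely in the already-proved Theorem \ref{embed tree}. The name ``infinite divisibility'' is justified by applying this with $\Lambda(t)=t^{1/N}$ for every $N$, exhibiting each such kernel as an $N$-th power of another CPP kernel of the same tree type, in parallel with the classical notion for positive definite functions.
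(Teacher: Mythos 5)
Your proof is correct and is essentially identical to the paper's: the paper likewise disposes of the corollary in one line by observing that if $\Omega$ satisfies (\ref{omega properties}) then so does $\Lambda(\Omega)$, and then invoking Theorem \ref{embed tree}. Your additional checks (the normalization $\Lambda(\Omega(o))=\Lambda(1)=1$, the monotonicity under post-composition, and the specialization $\Lambda(t)=t^{\lambda}$) simply make explicit what the paper leaves as an observation.
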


This is a consequence of the theorem and the observation that if $\Omega$
satisfies (\ref{omega properties}) then so does $\Lambda(\Omega).$ The spaces
with kernel function $k_{xy}^{2}$ arise naturally in the study of Hankel forms
on $H$ and have an independent intrinsic description, there is some discussion
of this and further references in \cite{FR}.

This corollary does not hold for a general RKHS $H$ with the CPP. If $k$ is
the kernel function of any such $H$ then, for $0<\lambda<1,$ $k^{\lambda}$ is
the kernel function of a space with the CPP \cite[Remark 8.10]{AM}$.$ However
that range of $\lambda$ is sharp. That is shown by the family of spaces
$\mathcal{D}_{\lambda}$ of Example \ref{arg}, a family which includes the
Hardy space for $\lambda=1$ and the Bergman space at $\lambda=2.$

It is possible to reverse the construction in the theorem and recover the tree
from the Hilbert space. For instance, suppose we have a RKHS $H$ with its set
of reproducing kernels $\left\{  k_{\lambda}\right\}  _{\lambda\in\Lambda}%
\ $and that all $k_{\lambda\mu}=\left\langle k_{\lambda},k_{\mu}\right\rangle
$ are real. In analogy with (\ref{less than}), define a partial order
$\preccurlyeq$ on $\Lambda$ by $\sigma\preccurlyeq\tau$ if $k_{\sigma\tau
}=k_{\sigma\sigma}.$ Suppose there is an element $\alpha$ so that for all
$\lambda\in\Lambda$ we have $k_{\alpha\lambda}=1,$ or, equivalently,
$k_{\alpha\alpha}=1$ and for all $\lambda$, $\alpha\preccurlyeq\lambda.$
Suppose further that for each $\lambda$ the segment $\left[  \alpha
.\lambda\right]  =\left\{  \mu\in\Lambda:\mu\preccurlyeq\lambda\right\}  $ is
totally ordered by $\preccurlyeq.$ This is enough data to form $\mathcal{T}$ ,
a rooted tree with $\Lambda$ as its vertex set and $\alpha$ as the root. If we
define $\Omega$ on $\mathcal{T}$ by requiring (\ref{rnel}) hold then our space
$H$ is the space $H(\mathcal{T} $,$\Omega)$ produced by the earlier
construction. In fact, if we do not start with a Hilbert space, but just start
with a real valued function $k_{\cdot\cdot}$ on $\Lambda\times\Lambda$ which
induces a partial order of the type described then the previous discussion
produces a tree and the Hilbert space of functions on that tree having a
kernel function with the CPP.

Special cases of the previous theorem are proved in \cite{Haa} and \cite{N},
Although those proofs are formulated very differently, they center on
constructing strings of orthonormal vectors similar to our $\operatorname*{Sp}%
(T).$ In fact, given the structural form of $k$ it is not hard to see that
such strings of orthonormal vectors must provide the framework of any mapping
such as $\Phi.$

\subsection{Formulas for the Norm}

We can think of $\Omega(t)$ as defining the length of the path $\left[
0,t\right]  $ and let $\omega$ as the length of the individual segments. We
then have $\Omega(t)=I\omega(t)=\sum_{o<s\leq t}\omega\left(  s\right)  $, or,
equivalently, $\omega=D\Omega.$

When the kernel function of $H$ is of the form $k_{xy}=\Omega(x\wedge y)$ we
can write the distance function $\delta_{H}$ using $\Omega.$ For
$y\in\mathcal{T}$ and $y^{-}$ the predecessor of $y$ we have $k_{y^{-}y^{-}%
}=k_{yy^{-}}=\Omega\left(  y^{-}\right)  $ and $k_{yy}=\Omega(y).$ Thus%
\[
\delta_{H}^{2}(y,y^{-})=1-\frac{\Omega(y^{-})^{2}}{\Omega(y^{-})\Omega
(y)}=\frac{\Omega(y)-\Omega(y^{-})}{\Omega(y)}=\frac{D\Omega(y)}{\Omega
(y)}=\frac{\omega(y)}{\Omega(y)}.
\]
The final expressions suggest an analogy with the expression $\partial
_{\gamma}\log\left\Vert k_{y}\right\Vert $ for a continuous variable $y$.

Using the definition of $\omega$ and the summation by parts formula we can
write the norm of
\begin{equation}
f(y)=\sum c_{x}k_{x}(y). \label{f}%
\end{equation}
in two ways, one involving the values of $f(y)$, the other involving the
coefficients $c_{x}.$ The sets of data $\left\{  f(y)\right\}  $ and $\left\{
c_{x}\right\}  $ are dual to each other; the reproducing kernels generate the
evaluation functionals and the vectors in the basis which is dual to the basis
of reproducing kernels generate the coefficient functionals.

\begin{corollary}
Given $f$ as in (\ref{f}) we have
\begin{align}
\left\Vert f\right\Vert ^{2}  &  =\left\vert I^{\ast}f(o)\right\vert ^{2}%
+\sum_{z>o}\omega(z)\left\vert I^{\ast}(c_{x})(z)\right\vert ^{2},\text{
and}\nonumber\\
\left\Vert f\right\Vert ^{2}  &  =\left\vert I^{\ast}f(o)\right\vert ^{2}%
+\sum_{z>o}\omega(z)^{-1}\left\vert Df(z)\right\vert ^{2}. \label{sobolev}%
\end{align}

\end{corollary}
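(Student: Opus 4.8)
The plan is to obtain both identities from a single application of the summation-by-parts formula, Lemma \ref{parts}, and then to pass between the two displayed forms by means of one pointwise identity linking the difference operator $D$ acting on $f$ to the operator $I^{\ast}$ acting on the coefficient sequence $(c_x)$.

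First I would expand the norm directly from the kernel. Writing $f=\sum_x c_x k_x$ and using $k_{xy}=\Omega(x\wedge y)$ from (\ref{omega}),
\[
\|f\|^2=\sum_{x,y\in\mathcal{T}}c_x\overline{c_y}\,k_{xy}=\sum_{x,y\in\mathcal{T}}\Omega(x\wedge y)\,c_x\overline{c_y}.
\]
This is precisely the bilinear form to which Lemma \ref{parts} applies, taken with $h=\Omega$ and with $(c_x)$ in the role of the test function. Because $\Omega(o)=1$ and $\Omega(z)-\Omega(z^-)=D\Omega(z)=\omega(z)$, with the $z=o$ term dropping out since $D\Omega(o)=0$, the lemma delivers the first formula at once, the sum running over $z>o$ with weights $\omega(z)$. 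The boundary term is $|I^{\ast}(c_x)(o)|^2$, and I would note that $I^{\ast}(c_x)(o)=\sum_x c_x=f(o)$, the last step because $x\wedge o=o$ and $\Omega(o)=1$ for every $x$; this is the constant term shared by both displayed identities.

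The crux is then the pointwise identity
\[
Df(z)=\omega(z)\,I^{\ast}(c_x)(z)\qquad(z>o).
\]
To establish it I would write $Df(z)=f(z)-f(z^-)=\sum_x c_x\bigl(\Omega(x\wedge z)-\Omega(x\wedge z^-)\bigr)$ and analyze the meet vertex by vertex. If $x$ lies in the subtree rooted at $z$ then $x\wedge z=z$ while $x\wedge z^-=z^-$, so the bracket equals $\omega(z)$; if $x$ is not below $z$ then the path from $o$ to $x$ departs from the path to $z$ at or above $z^-$, so $x\wedge z=x\wedge z^-$ and the bracket vanishes. Summing over the surviving indices gives $Df(z)=\omega(z)\sum_{x\geq z}c_x=\omega(z)\,I^{\ast}(c_x)(z)$.

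Finally, since $\omega(z)=\Omega(z)-\Omega(z^-)>0$ by the monotonicity required in (\ref{omega properties}), the identity can be solved as $I^{\ast}(c_x)(z)=\omega(z)^{-1}Df(z)$ and substituted into the first formula: each summand $\omega(z)\,|I^{\ast}(c_x)(z)|^2$ becomes $\omega(z)^{-1}|Df(z)|^2$, while $Df(o)=0$ leaves the boundary term unchanged, yielding (\ref{sobolev}). The one genuinely nontrivial step is the meet analysis behind $Df=\omega\,I^{\ast}(c_x)$; everything else is bookkeeping around Lemma \ref{parts}.
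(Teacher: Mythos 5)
Your proposal is correct and follows essentially the same route as the paper: the first identity comes from Lemma \ref{parts} applied with $h=\Omega$ to the coefficient sequence, and the second from the pointwise identity $Df(z)=\omega(z)\,I^{\ast}(c_{x})(z)$, which the paper verifies for $f=k_{x}$ by linearity and you verify by the equivalent meet-by-meet analysis. Your case split (bracket equal to $\omega(z)$ when $x\geq z$, vanishing otherwise) is stated with the correct orientation of the order relation, so no further comment is needed.
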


\begin{proof}
The first statement follows directly from the summation by parts formula. The
second follows from the first as soon as we show that $Df(z)=\omega(z)I^{\ast
}(c_{y})(z).$ Both sides are linear functions of $f$ and hence it suffices to
do the verification for $f=k_{x}$ Select $x$ and $z.$ If $x>z$ or $x$ is not
comparable to $z$ then $k_{x}(z)=k_{x}(z^{-})$ and hence $Dk_{x}(z)=0=I^{\ast
}(c_{y})(z).$ The other possibility is that $x\leq z$ in which case, taking
note of the definitions of $k,\Omega,$ and $\omega,$ we have
\[
Dk_{x}(z)=-k_{x}(z^{-})+k_{x}(z)=-\Omega(z^{-})+\Omega(z)=\omega
(z)=\omega(z)I^{\ast}(c_{y})(z).
\]

\end{proof}

\subsection{Examples}

\subsubsection{Dirichlet-Sobolev Spaces}

Classical Dirichlet type spaces and Sobolev spaces are characterized by
integrability conditions on derivatives. Analogous spaces on trees are
obtained putting summability conditions on differences.

The dyadic Dirichlet space is a basic example. Let $\mathcal{T}_{2}$ be a
rooted dyadic tree and let $I,$ $I^{\ast},$ and $D$ be as above, and select
$\omega$ to be identically one. Define $\mathcal{D(T}_{2}\mathcal{)}$, the
\textit{dyadic Dirichlet space }to be the Hilbert space $H(\mathcal{T}_{2}%
$,$\Omega)$ produced in the previous theorem, the space of
functions\textit{\ }$f$ on $\mathcal{T}_{2}$ for which%
\[
\left\Vert f\right\Vert ^{2}=\left\vert I^{\ast}f(o)\right\vert ^{2}%
+\sum_{z\in\mathcal{T}}\left\vert Df(z)\right\vert ^{2}<\infty.
\]
That space models the classical Dirichlet space, the space of functions $f$
holomorphic on the disk for which
\[
\left\Vert f\right\Vert ^{2}=\left\vert f(0)\right\vert ^{2}+\frac{1}{\pi}%
\int\int_{\left\vert z\right\vert <1}\left\vert \frac{d}{dz}f(z)\right\vert
^{2}dA(z)<\infty.
\]
The space $\mathcal{D(T}_{2}\mathcal{)}$, and and related spaces have been
studied by the author and collaborators, both for their intrinsic interest and
as a tool in the study of spaces of smooth functions; \cite{ARS02},
\cite{ARS06}, \cite{ARSW11b}, \cite{ARSW14}, \cite{ARSW18}.

\subsubsection{Exponentials of Distances}

Suppose that a rooted tree $\mathcal{T}$ carries a geodesic distance function
$d$; a nonnegative function such that, for $x,y,z\in\mathcal{T}$ with
$y\in\left[  x,z\right]  $ we have $d(x,z)=d(x,y)+d(y,z).$ Any such function
is obtained by assigning a nonnegative length to each edge and letting
$d(x,z)$ be the length of the geodesic path connecting $x$ and $y.$ Such
distance functions automatically satisfy the following useful relationship:
for $x,y\in\mathcal{T}$
\begin{equation}
d(x,y)=d(o,y)+d(o,y)-2d(o,x\wedge y). \label{distance between}%
\end{equation}
Interestingly, this can be rewritten as%
\[
d(o,x\wedge y)=(d(o,y)+d(o,y)-d(x,y))/2.
\]
Hence, by definition, $d(o,x\wedge y)$ equals the Gromov product $(x|y)_{o}$.
For more about that quantity see, for instance, \cite{V}.

Select $\Lambda>1.$ Given $\mathcal{T}$ and $d$ we consider the space
$H=H(\mathcal{T}$,$d,o)=H(\mathcal{T}$,$d,\Lambda,o)$ with kernel functions%
\[
k_{xy}=k_{x\wedge y,x\wedge y}=\Lambda^{d(o,x\wedge y)}=\Lambda^{(x|y)_{o}}.
\]
This is an instance of our earlier construction with $\Omega(s)=\Lambda
^{d(o,s)}$ and it has several attractive computational properties.

If we change the choice of root vertex on the tree then we can build a new
Hilbert space using the same distance function. If $\tilde{o}$ is the new root
then there is also a new order structure $\widetilde{<}$ and hence, also a new
meet operation $\tilde{\wedge}.$ We can then form the Hilbert space
$\widetilde{H}=H(\mathcal{T}$,$d,\tilde{o})$ with kernel function
\[
\tilde{k}_{xy}=\tilde{k}_{x\tilde{\wedge}y,x\tilde{\wedge}y}=\Lambda
^{d(\tilde{o},x\tilde{\wedge}y)}.
\]
Although we have changed the root, we have not changed the tree or the
distance function.

\begin{proposition}
Changing the root of $\mathcal{T}$ produces a rescaling of $H;$%
\[
H(\mathcal{T},d,o)\sim H(\mathcal{T},d,\tilde{o}).
\]

\end{proposition}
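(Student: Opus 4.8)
The plan is to show that the kernel functions $k_{xy}$ and $\tilde k_{xy}$ differ only by a rescaling factor of the form $\gamma(x)\overline{\gamma(y)}$, which by the definition in (\ref{rescaling 2}) is exactly what rescaling equivalence requires. The key is the additivity relation (\ref{distance between}) for the geodesic distance $d$, which lets me relate the old Gromov product $(x\mid y)_o = d(o, x\wedge y)$ to the new one $(x\mid y)_{\tilde o} = d(\tilde o, x\tilde\wedge y)$. First I would write out both kernels explicitly, $k_{xy} = \Lambda^{d(o, x\wedge y)}$ and $\tilde k_{xy} = \Lambda^{d(\tilde o, x\tilde\wedge y)}$, and aim to produce a function $\gamma$ on $\mathcal{T}$ so that $\tilde k_{xy} = \gamma(x)\overline{\gamma(y)} k_{xy}$; since everything here is real and positive, $\gamma$ will be real-valued and the conjugation is cosmetic.

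The central computation is a formula for the exponent $d(\tilde o, x\tilde\wedge y) - d(o, x\wedge y)$ that splits as a sum of a term depending only on $x$ and a term depending only on $y$. I would proceed by invoking the symmetric form of (\ref{distance between}) twice, once with respect to each root. Writing the Gromov product in its symmetric form $(x\mid y)_o = \tfrac{1}{2}\bigl(d(o,x) + d(o,y) - d(x,y)\bigr)$ and similarly for $\tilde o$, the distance $d(x,y)$ between the two points is the \emph{same} regardless of the choice of root because the tree and the length function $d$ are unchanged. Subtracting the two expressions, the $d(x,y)$ terms cancel, leaving
\[
(x\mid y)_{\tilde o} - (x\mid y)_o = \tfrac{1}{2}\bigl(d(\tilde o, x) - d(o,x)\bigr) + \tfrac{1}{2}\bigl(d(\tilde o, y) - d(o,y)\bigr).
\]
This is precisely the sought separation: defining $\gamma(x) = \Lambda^{\tfrac{1}{2}(d(\tilde o, x) - d(o,x))}$, I obtain $\tilde k_{xy} = \gamma(x)\gamma(y)\,k_{xy}$, and since $\gamma$ is nonvanishing (indeed strictly positive), this exhibits the rescaling.

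The main obstacle is establishing cleanly that $d(x,y)$, the intrinsic tree distance between two fixed vertices, does not depend on the root, and more importantly that the Gromov product admits the symmetric expression above. The symmetric formula $(x\mid y)_o = d(o, x\wedge y)$ is asserted via (\ref{distance between}), but I want to be sure that the meet-based definition $d(o, x\wedge y)$ genuinely equals the symmetric combination for both roots; this is the one place where the combinatorics of the tree enters. The verification rests on the fact that $x\wedge y$ (respectively $x\tilde\wedge y$) is the unique vertex on the geodesic $[x,y]$ that is closest to $o$ (respectively $\tilde o$), together with the geodesic additivity of $d$ along that path. Once that geometric fact is in hand, the algebraic cancellation is immediate and the rescaling function $\gamma$ drops out with no further work. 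I would therefore concentrate the exposition on the meet/Gromov-product identity and treat the exponent subtraction as a short computation.
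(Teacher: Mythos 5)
Your proof is correct, and it reaches the rescaling in the same way the paper does at the top level: exhibit a nonvanishing $\gamma$ with $\tilde k_{xy}=\gamma(x)\overline{\gamma(y)}k_{xy}$ and invoke (\ref{rescaling 2}). The difference is in how the key identity is obtained. The paper cites Lemma 4 of \cite{ARS10}, the meet-change formula (\ref{change}), which is proved there by a case analysis ("clear after making sketches for the various cases"). You instead derive the needed splitting directly from the symmetric Gromov-product form $d(o,x\wedge y)=\tfrac12\bigl(d(o,x)+d(o,y)-d(x,y)\bigr)$ -- which the paper itself records just after (\ref{distance between}) -- applied once for each root; since $d(x,y)$ is intrinsic to the edge-length structure and independent of the root, it cancels in the difference and no case analysis is needed. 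This is self-contained and arguably cleaner. It also provides a useful check: rewriting your exponent identity back in terms of meets gives
\[
d(\tilde o,x\tilde\wedge y)=d(o,x\wedge y)+d(o,\tilde o)-d(o,x\wedge\tilde o)-d(o,\tilde o\wedge y),
\]
which differs from (\ref{change}) as printed by the signs of the last three terms; a one-line example (four collinear vertices $o,\tilde o,x,y$ with unit edges) confirms your version and suggests the signs in the displayed (\ref{change}) should be corrected. The one point you flag as a potential obstacle -- that $d(o,x\wedge y)$ really equals the symmetric combination for both roots -- is exactly as routine as you suspect: $x\wedge y$ is the branch point of $[o,x]$ and $[o,y]$, so $[x,y]$ passes through it and geodesic additivity gives the formula; the same argument applies verbatim with $\tilde o$ in place of $o$.
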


\begin{proof}
This is an immediate consequence of the definitions, the computational
properties of the function $\Lambda^{x},$ and the following equation which
relates the new geometry to the old;%
\begin{equation}
d(\tilde{o},x\tilde{\wedge}y)=d(o,x\wedge y)-d(o,\tilde{o})+d(o,x\wedge
\tilde{o})+d(o,\tilde{o}\wedge y). \label{change}%
\end{equation}
That equation is Lemma 4 of \cite{ARS10}, where it is described as "clear
after making sketches for the various cases".
\end{proof}

Another interesting rescaling of $H(\mathcal{T},d,\Lambda,o)$ is the
normalized kernel rescaling. That is, we pass to the space defined by the new
kernel functions
\[
j_{xy}=\frac{k_{vy}}{k_{xx}^{1/2}k_{yy}^{1/2}}.
\]
In this rescaling all the kernel functions are unit vectors, we always have
$\left\vert j_{xy}\right\vert \leq1,$ and the point $o$ does not play a
distinguished role. With $\Gamma=\Lambda^{-1/2},$ and using
(\ref{distance between}). we have
\[
j_{xy}=\frac{\Lambda^{d(o,x\wedge y)}}{\Lambda^{d(o,x)/2}\Lambda^{d(o,y)/2}%
}=\Gamma^{\left(  d(o,x)+d(o,y)-2d(o,x\wedge y)\right)  }=\Gamma^{d(x,y)}.
\]
Thus the new kernel function only depends on the distance between the points
and, in particular does not depend on the root. We denote the space with these
kernel functions by $J(\mathcal{T}$,$\Gamma,d)$.

\begin{proposition}
Given $\mathcal{T}$, $d,$ and $\Gamma,$ $0<\Gamma<1.$ The space $J(\mathcal{T}%
$,$\Gamma,d)$ with reproducing kernels $j_{x,y}=\Gamma^{d(x,y)}$ is a RKHS
which has the CPP. For $\Lambda=\Gamma^{-2}$ and any choice of basepoint $o$
in $\mathcal{T}$, $J(\mathcal{T}$,$\Gamma,d)\sim H(\mathcal{T},d,\Lambda,o).$
\end{proposition}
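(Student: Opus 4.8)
The plan is to prove the proposition by recognizing $J(\mathcal{T},\Gamma,d)$ as a rescaling of $H(\mathcal{T},d,\Lambda,o)$ via an explicit rescaling function, and then transferring the RKHS and CPP properties across that rescaling. First I would record that the choice $\Omega(s)=\Lambda^{d(o,s)}$ meets the hypotheses (\ref{omega properties}) of the tree construction: $\Omega(o)=\Lambda^{d(o,o)}=\Lambda^{0}=1$, and since $\Lambda>1$ while $d(o,\cdot)$ increases strictly along the order relation $<$ (because $d$ is a geodesic distance), we get $\Omega(y)<\Omega(x)$ whenever $y<x$. Consequently Theorem \ref{embed tree} applies and guarantees that $H(\mathcal{T},d,\Lambda,o)$ is a genuine RKHS which has the CPP; this handles the two assertions about $H$ that we will pull back to $J$.

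Next I would exhibit the rescaling explicitly, taking the indexing map $\Xi$ to be the identity on the common vertex set $\mathcal{T}$. Set $\gamma(x)=\Gamma^{d(o,x)}$, a strictly positive, real-valued function on $\mathcal{T}$. Using the geodesic identity (\ref{distance between}) together with $\Lambda=\Gamma^{-2}$, the kernel of $J$ factors as
\[
j_{xy}=\Gamma^{d(x,y)}=\Gamma^{d(o,x)+d(o,y)-2d(o,x\wedge y)}=\Gamma^{d(o,x)}\,\Gamma^{d(o,y)}\,\Lambda^{d(o,x\wedge y)}=\gamma(x)\overline{\gamma(y)}\,k_{xy},
\]
which is precisely the rescaling relation (\ref{rescaling 2}) with this $\gamma$. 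This is the same computation carried out in the normalized-kernel rescaling discussion immediately preceding the statement, now read as an identity between two bona fide Gram matrices. Hence $J(\mathcal{T},\Gamma,d)\sim H(\mathcal{T},d,\Lambda,o)$. Moreover, once written through $d$, neither $\gamma$ nor the displayed identity refers to the root, so the equivalence holds for every basepoint $o$, consistent with the preceding proposition on root changes.

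Finally I would conclude that $J$ inherits both desired properties. Rescaling is an equivalence relation among RKHS, so $J$ is itself a RKHS; and by Theorem \ref{embed} the CPP is equivalent to being of the form $\sim DA_{n}(X)$, a condition transported by transitivity of $\sim$ (equivalently, by the rescaling-invariance of the multiplier algebra and its norm noted in Section \ref{invariance}). Thus $J$ has the CPP because $H(\mathcal{T},d,\Lambda,o)$ does. The only point demanding care, rather than a genuine obstacle, is the strict-monotonicity clause of (\ref{omega properties}): it requires that $d(o,\cdot)$ be strictly increasing along $<$, i.e. that edges have positive length. If $d$ assigns length zero to some edges, one first collapses those edges, identifying their endpoints as a single vertex; after this reduction $\Omega$ is strictly increasing and the argument above applies verbatim.
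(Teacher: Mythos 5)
Your proposal is correct and follows essentially the same route as the paper: the paper's proof simply observes that $J$ was constructed as the normalized-kernel rescaling of $H(\mathcal{T},d,\Lambda,o)$ (your $\gamma(x)=\Gamma^{d(o,x)}=k_{xx}^{-1/2}$ is exactly that rescaling), so it inherits the RKHS and CPP properties from Theorem \ref{embed tree}, with basepoint-independence coming from the preceding root-change proposition. Your extra care about zero-length edges and the verification that $\Omega(s)=\Lambda^{d(o,s)}$ satisfies (\ref{omega properties}) are reasonable refinements the paper leaves implicit.
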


\begin{proof}
The space $J$ was constructed as a rescaling of $H$ and thus it is a RKHS with
the CPP. The second statement then follows from the previous proposition.
\end{proof}

The spaces $H(\mathcal{T},d,\Lambda,o)$ are instances of tree
Dirichlet-Sobolev spaces characterized by (\ref{sobolev}). They also show up
in other places and for other reasons, \cite{Haa}, \cite{N}, \cite{ARS10}. One
practical fact about the spaces is that they are well suited for making
explicit computations and estimates, \cite{Haa}, \cite{N}. The spaces are also
useful models for spaces holomorphic functions on $\mathbb{B}^{1}$ and, more
generally, Hilbert spaces that arise in the harmonic analysis of
$\operatorname*{Aut}\left(  \mathbb{B}^{1}\right)  $ and related groups,
\cite{N}, \cite{ARS10}.

There is an additional use of these spaces which goes beyond our discussion
here but we would like to at least mention. Questions involving function
spaces on the disk often lead to questions in the function theory of the
boundary circle. Those questions can be quite delicate, with subtle issues in
capacity theory replacing more familiar analysis of smooth functions. A
similar thing happens with function theory on trees, analysis in the Hilbert
space of functions on the tree leads to questions about functions on the ideal
boundary of the tree. In some cases that analysis on the ideal boundary is
much more transparent and tractable than its continuous analog, and it gives
both a tool and a guide for the more classical case. For instance, this is a
basic theme in \cite{N} and is explored \cite{ARSW14}.

\section{The Multiplier Algebra\label{multiplier algebra}}

If $A$ is the multiplier algebra of a finite dimensional RKHS $H$ with the
CPP; $A=\operatorname*{Mult}\left(  H\right)  ,$ $H\sim DA_{n}(X),$
$X\subset\mathbb{CH}^{n},\ $then many of the results in Section
\ref{geometry of X} can be used to pass analytic and geometric information
between $A,$ $H$ and $X.$ In fact much more is true. It is a theorem of Hartz
\cite[Sec. 3]{Ha} that $H$ and $X$ are determined (up to the natural
equivalence relations) by the structure of $A.$ Here is his theorem formulated
to emphasize the geometry of the unit ball of $A.$

We are assuming $A=\operatorname*{Mult}(DA_{n}(X)),$ and without loss of
generality we assume $X$ is in normal form. An $m\in A$ is determined by the
vector $a(m)=(\alpha_{1},...,\alpha_{n})$ where $m(x_{i})=a_{i}.$ Using those
vectors as coordinates we identify $A$ with the space $\mathbb{C}^{n},$ with
coordinatewise multiplication, and with the norm $\left\Vert \cdot\right\Vert
$ induced by $A$. Let $\left(  A\right)  _{1}$ be the closed unit ball of $A$
viewed as a subset of $\mathbb{C}^{n}.$ For $1\leq i\leq n$ let $S_{i}$ be the
hyperplane on which the $i^{th}$ coordinate vanishes, $S_{i}=\left\{
(\alpha_{1},...,\alpha_{n})\in\mathbb{C}^{n}:a_{i}=0\right\}  .$ For $1<j\leq
n$ let $e_{j}$ be the point of $S_{1}\cap\left(  A\right)  _{1}$ that gives
the maximum value of the functional $\operatorname{Re}a_{j}.$ Thus the
coordinates of $e_{i}$ are the values taken by the multiplier $m_{j}$ which
satisfies $\left\Vert m_{j}\right\Vert \leq1,$ $m_{j}(x_{1})=0,$ and
$\operatorname{Re}m_{j}(x_{j})$ is maximal. Because we are assuming that
$A=\operatorname*{Mult}(H)$ for an $H$ with the CPP, we know that that $m_{j}$
is unique and is given by (\ref{multiplier formula}). Because $X$ is in normal
form that formula simplifies. We have
\begin{align*}
e_{j}  &  =(e_{j1},e_{j2},...,e_{jn})=\left(  m_{1j}(x_{1}),...,m_{1j}%
(x_{n})\right) \\
m_{1j}(x_{r})  &  =\frac{1}{\delta_{j}(x_{1},x_{j})}\left(  1-\frac
{k_{j1}k_{1r}}{k_{11}k_{jr}}\right)  =\left(  1-k_{jj}^{-1}\right)
^{-1/2}\left(  1-\frac{1}{k_{jr}}\right)
\end{align*}
For all $j,$ $e_{j1}=0.$ It is clear from this formula that the $\left(
n-1\right)  ^{2}$ remaining $e_{jk}$ are sufficient to reconstruct the Gram
matrix of $DA_{n}(X).$ Thus

\begin{theorem}
[{\cite[Sec. 3]{Ha}}]If
\begin{equation}
A=\operatorname*{Mult}(DA_{n}(X)), \label{mult}%
\end{equation}
then the Hilbert space $H=DA_{n}(X)$ is determined up to rescaling,
equivalently, the set $X\subset\mathbb{CH}^{n}$ is determined up to
automorphism, by the $(n-1)^{2}$ complex numbers $F=\left\{  e_{jk}\right\}
_{j,k=2}^{n}.$
\end{theorem}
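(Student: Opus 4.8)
The plan is to read the numbers $e_{jk}$ off the normed-algebra structure of $A$, invert the explicit formula for the extremal multipliers to recover the Gram matrix $G(DA_{n}(X))$, and then invoke Theorem \ref{reduction} to pass from that matrix to the congruence class of $X$ and the rescaling class of $H$.

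First I would check that $F$ is genuinely determined by $A$ as a normed commutative algebra. Since $A$ is finite dimensional, commutative, and semisimple, its $n$ primitive idempotents are canonical, and they recover the coordinatewise identification $A\cong\mathbb{C}^{n}$ together with the hyperplanes $S_{i}$. Fixing $x_{1}$ as basepoint, the vector $e_{j}$ is defined by the extremal problem $\max\{\operatorname{Re}a_{j}:a\in S_{1}\cap(A)_{1}\}$, which refers only to the unit ball of $A$. Because $H$ has the CPP, Proposition \ref{same delta} guarantees that the maximizer is the unique multiplier $m_{1j}$ of (\ref{multiplier formula}), so each coordinate $e_{jk}=m_{1j}(x_{k})$ is a well-defined functional of $A$; as $e_{j1}=m_{1j}(x_{1})=0$ for all $j$, the nontrivial data is precisely the $(n-1)^{2}$ numbers $\{e_{jk}\}_{j,k=2}^{n}$.

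Next I would invert the normal-form formula
\[
e_{jk}=\left(1-k_{jj}^{-1}\right)^{-1/2}\left(1-\frac{1}{k_{jk}}\right),\qquad 2\le j,k\le n.
\]
Setting $k=j$ gives $e_{jj}=(1-k_{jj}^{-1})^{1/2}=\delta_{1j}$, which is real and, by irreducibility, strictly positive; hence $k_{jj}=(1-e_{jj}^{2})^{-1}$ is recovered and the normalizing factor equals $e_{jj}^{-1}$. For $k\neq j$ the relation then rearranges to $1/k_{jk}=1-e_{jj}e_{jk}$, and since $k_{jk}\neq 0$ by irreducibility we obtain $k_{jk}=(1-e_{jj}e_{jk})^{-1}$. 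Finally, because $X$ is in normal form, $x_{1}$ sits at the origin and $k_{11}=k_{1j}=k_{j1}=1$, which fills in the first row and column. Thus every entry of $G(DA_{n}(X))$ is an explicit function of $F$; note that $e_{jk}$ and $e_{kj}$ produce the conjugate entries $k_{jk}$ and $k_{kj}=\overline{k_{jk}}$, so the data is consistent though mildly redundant.

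To conclude, the Gram matrix computed from $F$ is that of the normal-form space $DA_{n}(X)$, so by Theorem \ref{reduction} — in particular the fact, established in its proof, that for $Z$ in normal form the matrix $G(DA_{n}(Z))$ determines $Z$ — the set $X$ is pinned down exactly, hence up to automorphism, and $H=DA_{n}(X)$ up to rescaling. I expect the main obstacle to be not the inversion of Step 2, which is routine, but the conceptual content of Step 1: making precise that the extremal numbers $e_{jk}$ depend only on the isometric-algebra structure of $A$ and not on the a priori realization $A=\operatorname{Mult}(DA_{n}(X))$. This is exactly where the CPP does the essential work, through the uniqueness in Proposition \ref{same delta}, and where the ambiguity in the choice and labeling of the basepoint must be absorbed into the phrase \emph{up to automorphism}.
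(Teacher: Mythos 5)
Your proposal is correct and follows essentially the same route as the paper: identify the extremal multipliers $m_{1j}$ via the uniqueness guaranteed by the CPP and formula (\ref{multiplier formula}), read off $e_{jk}=\left(1-k_{jj}^{-1}\right)^{-1/2}\left(1-k_{jk}^{-1}\right)$, invert to recover the Gram matrix of the normal-form space, and conclude via Theorem \ref{reduction}. The only difference is that you carry out explicitly the inversion step (recovering $k_{jj}$ from $e_{jj}=\delta_{1j}$ and then $k_{jk}=(1-e_{jj}e_{jk})^{-1}$) that the paper dismisses with ``it is clear from this formula,'' and you flag the worthwhile point that the $e_{jk}$ must be seen to depend only on the normed-algebra structure of $A$.
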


Here is a slightly weaker variation on the theorem using parameters that are
more algebraic. Given $A=\operatorname*{Mult}(DA_{n}(X))$ we extend the
notation of (\ref{delta g}) to
\[
\Delta(i;j,k)=\sup\left\{  \operatorname{Re}m(x_{i}):m\in A\text{, }%
m(x_{j})=m(x_{k})=0,\left\Vert m\right\Vert _{A}\leq1\right\}  .
\]
A geometric description of these numbers is that $\Delta(i;j,k)$ is the
maximal value of $\operatorname{Re}a_{i}$ in $S_{j}\cap S_{k}\cap\left(
A\right)  _{1}.$

Set%
\[
\mathfrak{D}\left(  X\right)  =\left\{  \delta_{ij}:1\leq i<j\leq n\right\}
\cup\left\{  \Delta(1;j,k):1<j<k\leq n\right\}
\]
The set of invariants $\mathfrak{D}$ determines the congruence class of $X$ up
to a finite set of ambiguity.

\begin{theorem}
Given $A=\operatorname*{Mult}(DA_{n}(X)),$ there are at most $2^{(n^{2}%
-3n)/2}$ distinct congruence classes $\mathcal{Y}$ of sets in $\mathbb{CH}%
^{n}$ for which $Y\in\mathcal{Y}$ implies $\mathfrak{D}\left(  Y\right)
=\mathfrak{D}\left(  X\right)  ,$
\end{theorem}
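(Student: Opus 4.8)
The plan is to show that the data $\mathfrak{D}(X)$ determines every entry of the larger invariant set $J(X)$ of (\ref{G(X)}) up to a sign ambiguity in each angular invariant, and then to control the number of admissible sign choices. First I would recover the numerical ingredients. The distances $\delta_{ij}$ appear verbatim in $\mathfrak{D}(X)$. For a pair $2\le j<k\le n$, apply Theorem \ref{three point extremals} to the three-point subconfiguration $\{x_1,x_j,x_k\}$: rewriting (\ref{deltaa}) with the indices $1,j,k$ in place of $1,2,3$ expresses $\Delta(1;j,k)^2$ as an affine function of $\cos A_{1jk}$ whose coefficient is $2\delta_{jk}^{-2}|\widehat{k_{1j}}||\widehat{k_{jk}}||\widehat{k_{k1}}|$. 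This coefficient is already determined by the distances and is nonzero by irreducibility, so $\Delta(1;j,k)$ together with the distances determines $\cos A_{1jk}$.

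Next I would pin down the magnitudes and isolate the ambiguity. The subspace spanned by $\{k_{x_1},k_{x_j},k_{x_k}\}$ inherits the CPP from $H=DA_n(X)$, so Corollary \ref{pi/2} applies and gives $|A_{1jk}|<\pi/2$; hence $\cos A_{1jk}>0$ determines $|A_{1jk}|$ uniquely and leaves only $\operatorname{sign}(A_{1jk})\in\{+,-\}$ free. A choice of these $\binom{n-1}{2}$ signs fixes the full set $J(X)$ of distances and angular invariants, and by the parameter discussion of Section \ref{describing} together with Theorems \ref{n=3} and \ref{reduction}, $J(X)$ determines the Gram matrix of a rescaling of $H$ and hence the congruence class of the underlying set. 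Consequently every congruence class sharing the prescribed $\mathfrak{D}$ comes from one of at most $2^{\binom{n-1}{2}}=2^{(n^2-3n+2)/2}$ sign patterns.

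The remaining factor of two, which sharpens the exponent to $(n^2-3n)/2$, is the delicate point. The plan is to exploit the complex-conjugation isometry of $\mathbb{CH}^n=\mathbb{B}^n$: by Section \ref{conjugations} coordinatewise conjugation carries $DA_n(X)$ to $DA_n(\overline{X})$, it preserves every $|\widehat{k_{ij}}|$ and hence every distance and every value $\Delta(1;j,k)$, so it fixes $\mathfrak{D}$, while it reverses the argument of each triple product $\widehat{k_{ij}}\widehat{k_{jk}}\widehat{k_{ki}}$ and therefore sends each $A_{1jk}$ to $-A_{1jk}$. Thus conjugation acts on the candidate sign patterns by the global flip $\sigma\mapsto-\sigma$. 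Identifying each configuration with its conjugate — equivalently, normalizing the sign of the single invariant $A_{123}$ to be nonnegative — collapses the $2^{\binom{n-1}{2}}$ patterns to $2^{\binom{n-1}{2}-1}=2^{(n^2-3n)/2}$, the asserted bound.

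I expect the last step to be the main obstacle, because it requires care about which equivalence is being counted. For holomorphic congruence alone a set and its conjugate are generically inequivalent, since their shape invariants differ in sign (Theorem \ref{n=3}); the reduction by a half is therefore legitimate only once conjugation is admitted as an isometry used to identify configurations, and I would make this explicit by noting that conjugation is a genuine $\rho$-isometry of $\mathbb{CH}^n$ realizing precisely the global sign flip on the angular data. The other ingredients — nonvanishing of the coefficient of $\cos A_{1jk}$, applicability of Corollary \ref{pi/2} to each triple, and the fact that coincidences among sign patterns only improve the count — are routine and can be checked directly.
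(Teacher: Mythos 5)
Your argument is correct and follows essentially the same route as the paper's proof: recover $\cos A_{1jk}$ from $\Delta(1;j,k)$ and the distances via Theorem \ref{three point extremals}, note that the residual sign ambiguities determine each triangle up to anticonformal conjugation, and invoke Theorems \ref{n=3} and \ref{reduction} to conclude that each admissible sign pattern yields at most one congruence class. Your explicit account of the drop from $2^{\binom{n-1}{2}}$ to $2^{(n^{2}-3n)/2}$ via the global conjugation flip --- together with your caveat that this identification tacitly enlarges ``congruence'' to admit the antiholomorphic isometry --- makes precise the one step the paper's proof passes over when it simply asserts that $(n^{2}-3n)/2$ binary choices suffice.
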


\begin{proof}
We see from Theorem \ref{three point extremals} that once we have
$\mathfrak{D}$ then we know $\cos A_{1jk}$ and hence we know the $A_{1jk}$ up
to sign. By Theorem \ref{n=3} and the comments which follow it, we then know
the congruence class of the triangle with vertices $\left\{  x_{1},x_{j}%
,x_{k}\right\}  $ up to a possible anticonformal conjugation. Thus
$(n^{2}-3n)/2$ binary choices determine the set of congruence classes of those
triangles. From Theorem \ref{reduction} we see that each set of choices
corresponds to at most one class $\mathcal{Y}_{i}.$
\end{proof}

In fact that bound is attained, see \cite{BE}.

The previous two theorems, as well as many of the previous results were
specifically about algebras of the form $A=\operatorname*{Mult}(DA_{n}(X)).$
There are closely related classes of algebras, for instance commutative finite
dimensional algebras of operators on Hilbert space, and one can ask about
their properties or ask how to recognize algebras of the type
$\operatorname*{Mult}(DA_{n}(X))$ among them. There is interesting literature
on these questions, including in particular the question of how to identify
\textit{Pick algebras, }algebras of the type\textit{\ }$\operatorname*{Mult}%
(DA_{1}(X)).$ Here are references to some of that work that seems related in
spirit to what we do here: \cite{CW}, \cite{CLW}, \cite{L}, \cite{MP},
\cite{P}, \cite{P2}, \cite{PS}.

\section{Beyond Spaces with Complete Pick Kernels}

Geometers who study moduli for finite subsets of $\mathbb{CH}^{n}$ frequently
also consider similar questions for finite subsets of complex projective
space, $\mathbb{CP}^{n}$, and there are very strong analogies between those
results and the results for $\mathbb{CH}^{n}$, \cite{B}, \cite{BE}, \cite{HS}.
It would be interesting to know how questions about point sets in
$\mathbb{CP}^{n}$ are related to Hilbert space questions. With that in mind we
mention that there are RKHS, $H,$ on the Riemann sphere for which the
associated $\delta_{H}$ is the natural metric for $\mathbb{CP}^{1}$, see, for
instance, the discussion of spin coherent states in, for instance, \cite{P}.

Finally, finite sets $X$ in $\mathbb{CH}^{n}$ are finite metric spaces with
additional structure inherited from $\mathbb{CH}^{n}.$ It would be interesting
to have an intrinsic, geometric, description of that type of structure on a
set $X,$ one not dependent on its realization inside hyperbolic space and
perhaps without references to Hilbert spaces or multiplier algebras.

\end{document}